\theoremstyle{plain}
\newtheorem{theorem}{Theorem}[section]
\newtheorem{proposition}[theorem]{Proposition}
\newtheorem{corollary}[theorem]{Corollary}
\newtheorem{lemma}[theorem]{Lemma}
\theoremstyle{definition}
\newtheorem{definition}[theorem]{Definition}
\newtheorem{conjecture}[theorem]{Conjecture}
\newtheorem{remark}[theorem]{Remark}
\newtheorem{example}[theorem]{Example}
\newtheorem{question}[theorem]{Question}
\newtheorem*{uclaim}{Claim}
\newtheorem*{remark*}{Remark}
\newtheorem*{ack}{Acknowledgment}
\numberwithin{equation}{section}
\numberwithin{table}{section}
\newcommand{\Gor}{\rm{Gor}}
\definecolor{purple}{rgb}{0.4,0.2,0.4}
\DeclareMathOperator{\rk}{\mathrm{rk}}
\def\Gr{\mathrm{Gr}}
\def\cha{\mathrm{char}\ }
\def\Mat{\mathrm{Mat}}
\def\Hom{\mathrm{Hom}}
\def\im{\mathrm{im}}
\DeclareMathOperator{\Ann}{Ann}
\def\Hilb{\mathrm{Hilb}}
\def\Soc{\mathrm{Soc}}
\def\<{\left<}
\def\>{\right>}
\def\Gl{\mathrm{Gl}}
\def\P{\mathcal{P}}
\def\A{\mathcal{A}}
\def\B{\mathcal{B}}
\def\F{\mathsf{k}}
\def\G{\mathrm{B}}
\def\Spec{\mathrm{Spec}}
\def\ini{\mathrm{in}}
\def\gin{\mathrm{gin}}
\def\m{\mathfrak{m}}
\def\Soc{\mathrm{Soc}}
\def\ns{\footnotesize \it}
\def\Aut{\mathrm{Aut}}
\def\w{\mathsf{w}}
\def\max{\mathrm{max}}
\newcommand{\maxA}{\ensuremath{{\mathfrak{m}_{\mathcal{A}}}}}
\definecolor{light-gray}{gray}{0.9}
\definecolor{med-gray}{gray}{0.5}
\definecolor{gray1}{gray}{0.87}
\definecolor{gray2}{gray}{0.74}
\definecolor{gray3}{gray}{0.64}
\definecolor{gray4}{gray}{0.48}
\definecolor{verylight-yellow}{rgb}{1,1,0.7}
\definecolor{yellow}{rgb}{1,1,0.2}
\definecolor{vivid-blue}{rgb}{0.2,0,1}
\definecolor{light-pink}{rgb}{1,0.8,1}
\definecolor{med-pink}{rgb}{1,0.6,1}
\definecolor{aqua}{rgb}{0.0, 1.0, 1.0}
\definecolor{brightgreen}{rgb}{0.4, 1.0, 0.0}
\definecolor{light-gray}{rgb}{0.5, 0.9, 0.5}
\def\cha{\mathrm{char}\ }
\title{Artinian algebras and Jordan type
\footnote{\textbf{Keywords}: Artinian algebra, Hilbert function, Jordan type, Lefschetz property, tensor product.\textbf{2010 Mathematics Subject Classification}: Primary: 13E10; Secondary: 13D40, 13H10, 14B07, 14C05.\quad Email addresses: {\sf a.iarrobino@northeastern.edu, pmm@uevora.pt, cmcdanie@endicott.edu}}}
\author{Anthony Iarrobino\\[.05in]
{\ns Department of Mathematics, Northeastern University, Boston, MA 02115,
USA.
}\\[.2in] Pedro Macias Marques\\[.05in]
{\ns Departamento de Matem\'{a}tica, Escola de Ci\^{e}ncias e Tecnologia, Centro de Investiga\c{c}\~{a}o}\\[-.05in]
{\ns em Matem\'{a}tica e Aplica\c{c}\~{o}es, Instituto de Investiga\c{c}\~{a}o e Forma\c{c}\~{a}o Avan\c{c}ada,}\\[-.05in]
{\ns Universidade de \'{E}vora, Rua Rom\~{a}o Ramalho, 59, P--7000--671 \'{E}vora, Portugal.}
\\[.2in]
Chris McDaniel\\[0.05in]
{\ns Endicott College, 376 Hale St
Beverly, MA 01915, USA.}
}
\date{August 31, 2022}
\begin{document}
\maketitle
\begin{abstract}
There has been much work on strong and weak Lefschetz conditions for graded Artinian algebras $A$, especially those that are Artinian Gorenstein. A more general invariant of an Artinian algebra $A$ or finite $A$-module $M$ that we consider here is the set of Jordan types of elements of the maximal ideal ${\mathfrak{m}}$ of $A$, acting on $M$. Here, the Jordan type of $\ell\in {\mathfrak{m}_A}$ is the partition giving the Jordan blocks of the multiplication map $m_\ell:M\to M$. In particular, we consider the Jordan type of a generic linear element $\ell$ in $A_1$, or in the case of a local ring $\mathcal{A}$, that of a generic element $\ell\in {\mathfrak{m}}_{\mathcal{A}}$, the maximum ideal. \par
We often take $M=A$, the graded algebra, or $M=\A$ a local algebra. The strong Lefschetz property of an element, as well as the weak Lefschetz property can be expressed simply in terms of its Jordan type and the Hilbert function of $M$. However, there has not been until recently a systematic study of the set of possible Jordan types for a given Artinian algebra $A$ or $A$-module $M$, except, importantly, in modular invariant theory, or in the study of commuting Jordan types.\par
We first show some basic properties of the Jordan type. In a main result we show an inequality between the Jordan type of $\ell\in\mathfrak{m}_{\A}$ and a certain local Hilbert function. In our last sections we give an overview of topics such as the Jordan types for Nagata idealizations, for modular tensor products, and for free extensions, including examples and some new results. We as well propose open problems.
\end{abstract}
\tableofcontents
\section{Introduction.}\label{introsec}\par
The strong and weak Lefschetz properties of graded Artinian algebras $A$
have been extensively studied, especially in the last twenty years. These properties are determined by the ranks of the multiplication operators ${m_{\ell^i}: A\to A}$ by powers $\ell^i$ of generic linear elements of $\ell\in A$. Our first main goal is to generalize these properties in several ways. First, given any linear element $\ell$ in $A_1$, we will consider the set of all such ranks of $m_{\ell^i}$, which determine a \emph{Jordan type} $P_\ell$, which is a partition of $n=\dim_\F A$. 
The partition $P_\ell$ compared to the Hilbert function of $A$ determines whether the pair $(\ell,A)$ is strong Lefschetz, or weak Lefschetz; however, in general there are many other possible Jordan types, and we explore these possibilities. Second, we will consider not only graded (standard or non-standard) Artinian algebras $A$, but also extend the notions of Lefschetz properties and Jordan type to local Artinian algebras $\A$ with maximal ideal $\m_\A$: for these we will consider the Jordan type for either arbitrary or generic elements $\ell\in \m_\A$. \vskip 0.2cm
Our second goal is to give a systematic account of the basic facts about Jordan type for multiplication maps in Artinian local algebras, or modules over them. We state in broad terms what is known, prove new results, and give many examples. One of our main new results, Theorem~\ref{thm:JtIneq}, establishes inequalities between the Jordan type $P_{\ell,M}$ of any element $\ell\in \m_\A$, acting by the map $m_{\ell}: M\to M$ on an $\A$-module $M$ and its Hilbert function $H(M)$; if $\A$ admits a weight function $\w$, i.e.\ if it is graded, then the inequalities are between $P_{\ell,M}$ and its $\w$-Hilbert function $H_{\w}(M)$, where $\ell$ is $\w$-homogeneous. We explore subtleties of the definition of Jordan type and its connection with Lefschetz properties in Section \ref{lefsec}.  We also introduce a refinement, the Jordan degree type, which specifies not only the Jordan type, but the initial degrees of the strings -  the maximal cyclic $\mathsf{k}[\ell]$ submodules of $A$ under the action of $m_\ell$ into which we may decompose $A$, in Section \ref{Jdegsec}. For a graded Artinian Gorenstein algebra $A$ the Jordan degree type is symmetric: this greatly restricts the possible Jordan types for $A$. This invariant has been in effect studied by T. Harima and J. Watanabe \cite{HW1.5}, and by B. Costa and R. Gondim \cite{CGo}, the latter introducing a colorful notion of string diagrams.
\vskip 0.2cm
Our third main goal, in Section \ref{examplessec}, is to outline several other approaches to the study of Jordan type, adding our own comments and results. 
For example, there have been studies in representation theory of modules having a constant Jordan type, and of the generic Jordan type of a module, and, as well, of the connection of Jordan type loci to vector bundles \cite{CFP,FPS,Be,Pev1,P}.  In addition there has been progress in the study of the modular case of tensor products of Jordan types, generalizing the Clebsch-Gordan formula in characteristic zero (\S \ref{tensorprodsec}, Remark \ref{modularTPrem}).  These studies have been by several groups, some apparently unaware of the related work of others: we give an overview in Section \ref{tensorprodsec}. \par 
Free extensions of an Artinian algebra $A$ with fiber $B$ were introduced by T. Harima and J. Watanabe \cite{HW0}; in \cite[Theorem 2.1]{IMM} we showed that, geometrically, a free extension is a flat deformation of $A\otimes_\mathsf{k} B$. Our Theorem~\ref{thm:CI} gives criteria for finding free extensions that are complete intersections. \par There are subtle conditions, investigated by P.~Oblak, T.~Ko\v{s}ir and others, on which pairs of Jordan types $P_\ell, P_{\ell'}$, partitions of $n$, ``commute'' -- can simultaneously occur for a single Artinian graded algebra $A$ or local algebra $\mathcal{A}$ of length $n$ \cite{Kh1,Kh2,KOb,Ob1,Ob2,IKVZ,Pan} (\S \ref{commutingJTsec}).
We hope that our discussion and results in Section \ref{examplessec} will suggest new connections that will be useful to the reader. 

\subsubsection{Detailed Overview.}
In Section \ref{JTypesec} we state and prove the basic properties for Jordan types for elements of Artinian algebras. In \S \ref{JTsec} we present well known equivalent definitions of the Jordan type of the multiplication map $m_\ell$ for an element $ \ell\in {\mathfrak{m}}$, the maximal ideal, and we present some properties of Artinian algebras that we will need. In \S \ref{conjHsec} we prove a main result, Theorem \ref{thm:JtIneq}, bounding above the Jordan type of a non-unit element $\ell$ of a local algebra $\A$, acting via $m_\ell$ on an $\A$-module $M$, by the conjugate of the Hilbert function of $M$; and we also show the analogue for graded algebras. Here we use the dominance partial order on partitions (Definition \ref{dominancedef}).\par In \S \ref{lefsec} we connect Jordan type with Lefschetz properties.
We will also in \S \ref{SLJTsec} consider Jordan type for more general elements $\ell\in \mathfrak{m}_{\mathcal{A}}$ or for non-homogeneous elements of $\mathfrak{m}_A$ when $A$ is graded. We say that an element $\ell $ of the maximal ideal $\m_A$ has ``strong Lefschetz Jordan type'' (SLJT) if $P_\ell=H(A)^\vee$, the conjugate partition of the Hilbert function $H(A)$
(Definition~\ref{SLJTdef}) -- this gives a fine tuning of the concept of strong Lefschetz, and as well provides an additional invariant (namely, SLJT) for distinguishing Artinian non-standard graded algebras.
\par
In \S \ref{hilbsec} we define an invariant, the contiguous partition $P_c(H)$ associated to a Hilbert function $H$ (Definition \ref{def:relativeLef}) and show that for a graded $A$-module $M$, its Jordan type is bounded above in the dominance order by the contiguous partition $P_c(H)$ (Theorem~\ref{thm:contig}). \par
We introduce the Macaulay duality in \S \ref{AAMDsec}, which we use to define examples. In \S \ref{Jdegsec} we introduce a finer invariant, the Jordan degree type (JDT) of a graded module over a graded algebra (Definition \ref{degreeJT-def}) and relate it to the central simple modules of T. Harima and J.~Watanabe.  There is a natural partial order on JDT related to specialization (Lemma \ref{JDTspeclem}). We use this to show that a Jordan type locus in 
$G_T$ may have several irreducible components (Example \ref{2.30ex}). We show also that the Jordan degree type is bounded in the concatenation order by a degree-invariant associated to the Hilbert function (Proposition \ref{PJTgenlem}). Finally, we give in \S \ref{genericsec} several results and examples highlighting the deformation properties of Jordan type; in particular we discuss Jordan type and initial ideal, and we compare the generic Jordan types of a local algebra $\A$ and its associated graded algebra.\par
Section \ref{examplessec} outlines further work by many groups on several aspects of Jordan types. In \S \ref{idealizationsec} we discuss the generic Jordan types for algebras constructed by idealization or by partial idealization: these include some well-known non-WL examples of H. Sekiguchi (H.~Ikeda), idealization examples of J.~Watanabe and R. Stanley, and, as well, partial idealization examples of R. Gondim and collaborators. These Jordan types can be weak Lefschetz (number of parts equal to the Sperner number of $H$) but not strong Lefschetz. In \S \ref{tensorprodsec} we report briefly on the Jordan type of tensor products, in both non-modular ($\cha \mathsf{k} $ is zero or $\cha \mathsf{k}=p$ is large) and modular cases. Our contribution is to introduce here the Jordan degree type, and prove for it an analogue of the Clebsch-Gordan formula for the tensor product.\par T. Harima and J. Watanabe introduced free extensions $C$ of a base algebra $A$ with fibre $B$ \cite{HW0}, which we have shown elsewhere are deformations of the tensor product \cite{IMM}. We here in Section \S \ref{twoexsec} show a new result that is useful in deciding when $C$ is a free extension and also in determining complete intersection extensions (Theorem~\ref{thm:CI}). In \S \ref{commutingJTsec} we discuss which pairs of Jordan types are compatible for different elements of the same Artinian algebra $A$. In \S \ref{prodsec} we propose further problems and possible directions of study. \par 
Related notes are \cite{IMM}, which focuses on free extensions; and \cite{MCIM}, joint with S.~Chen, which studies Jordan type and rings of relative coinvariants. This paper is our introduction to Jordan type and Jordan degree type for Artinian algebras $A$, and to some of the subtleties that arise, particularly in comparing these invariants with the Hilbert function of $A$.
\subsection{Notation.}

Throughout the paper $\sf k$ will be an arbitrary field unless otherwise specified -- except that we will assume $\sf k$ is infinite when we discuss ``generic'' Jordan type or parametrization. 
\par
\subsubsection{Local Artinian algebras.}
We will on the one hand consider a local Artinian algebra $\A$ containing the field $\F= \A/\m_\A$, where $\m_\A$ is the maximal ideal. The Jordan type $P_\ell$ of an element $\ell\in \m_\A$ is the partition of $n=\dim_\F(\A)$ giving the sizes of the blocks in the Jordan block decomposition of the multiplication operator $m_\ell: \A\to \A$; since the homomorphism $m_\ell$ is nilpotent, it is determined up to similarity by the partition $P_\ell$. Recall that the \emph{socle} of $\A$ is the ideal $\Soc(\A)=0:\m_\A$. We denote by $j_\A$, the \emph{maximal socle degree} of $\A$, the highest integer $j$ such that $\m_\A^{\,j}\neq0$. There is a natural $\m_\A$-adic filtration
\begin{equation}\label{ordereq}
\A\supset \m_\A\supset \m_\A^{\,2}\supset \cdots \supset\m_\A^{\,j_\A}\supset 0
\end{equation}
of $\A$; the \emph{order} $\nu(a)$ of a nonzero element $a\in\A$ is the largest integer $i$ such that $a\in\m_\A^{\,i}$. The \emph{associated graded algebra} $\A^\ast $ of $\A$ is $\A^\ast=\Gr_{\m_\A}(\A)=\oplus_{i=0}^j \A_i$ where $\A_i=\m_\A^{\,i}/\m_\A^{\,i+1}$. Here $\A^\ast$ is standard graded, in the sense that $\A^\ast$ is generated over $\A_0=\F$ by $\A_1$. The \emph{Hilbert function} of the local algebra $\A$ is $H(\A)=(h_0,h_1,\ldots, h_j)$ where $h_i=\dim_\F \A_i$. \par
\subsubsection{Graded Artinian algebras.}
By an $\mathbb{N}$-\emph{graded Artinian algebra} over $\F$ we shall mean a graded ring $A=\bigoplus_{i\geq 0}A_i$ with $A_0=\F$ which has finite dimension as a $\F$-vector space. In this graded case we denote by $j_A$ the largest integer $j$ for which $A_j\neq 0$.  We can write $A=R/I$ where $R$ is the polynomial ring  $R=\mathsf{k}[x_1,\ldots,x_r]$, and $I$ an ideal. A local Artinian algebra $\mathcal{A}$ (with a single maximal ideal, as we will always assume) can be written $\mathcal{A}=\mathcal{R}/I$, where $\mathcal{R}$ is the regular local ring $\mathcal{R}=\mathsf{k}\{x_1,\ldots,x_r\}$.\par
The socle of $A$ is $\Soc (A)=0:\mathfrak{m}_A\subset A$, an ideal of $A$ that includes $A_{j_A}$. The Hilbert function of the graded algebra $A$ is the sequence of non-negative integers $H(A)=(h_0,\ldots,h_j)$ where $h_i=\dim_{{\F}}A_i$. We say that $A$ is \emph{standard graded} if $A=\F[A_1]$, the algebra generated by degree one (linear) forms over $A_0=\F$; otherwise it is non-standard graded. 

We observe that a graded Artinian algebra $A$ over $\F$ may be regarded as a local Artinian algebra
$\A$ over $\F$, with maximal ideal $\m_{\A}=\oplus_{i=1}^{j_A}A_i$, and a \emph{weight function} $\mathsf{w}$ specifying the grading. That is, 
there is an algebra homomorphism $\mathsf{w}\colon\mathcal{A}\rightarrow \mathcal{A}[t]$, and the
\emph{$i^{th}$ graded component} of $\A$ (with respect to $\mathsf{w}$) is given by $\mathcal{A}_{i(\mathsf{w})}= \mathsf{w}^{-1}(\mathcal{A}\cdot t^i)$. Then $\A(\mathsf{w})=\oplus_{i\geq 0}\A_{i(\mathsf{w})}$ is a graded Artinian algebra in the above sense. We will sometimes use the notation $\A(\mathsf{w})$ when we want to stress that the Artinian algebra $\A$ is endowed with the weight function $\w$. We define the $\w$-socle degree of $\A$ to be the largest integer $j=j(\w)$ for which $\A_{j(\w)}\neq 0$, and the $\w$-Hilbert function $H_{\w}(\A)=(h_0^{\w},\ldots,h_j^{\w})$ where $h_i^{\w}=\dim_{{\F}}\A_{i(\mathsf{w})}$. Recall that for a local ring $\A$ the Hilbert function $H(\A)$ is defined as that of the associated graded algebra $\Gr_{\m_\A}(\A)$: thus, $\A(\w)$ is standard-graded if and only if $H(\A)=H_{\w}(\A)$. \par For either $R$ or the regular local ring $\mathcal{R}=\F\{x_1,\ldots,x_r\}$ we may specify a weight function $\w$, hence a grading, on suitable quotients of either, using the shorthand notation ${\w}(x_1,\ldots,x_r)=(d_1,\ldots,d_r)$, meaning that ${\w}(x_i)=t^{d_i}$ for each $i$: a quotient $A=R/I$ is \emph{suitable} for $\w$ if $I$ is a homogeneous ideal in the weighting $\w$.

We fix a finite $\A$-module $M$, of $\dim_{{\F}}M=n$ and we consider multiplication maps $m_\ell\colon M\rightarrow M$ by an element $\ell\in\m_\A$. The \emph{Jordan type} $P_\ell=P_{\ell,M}$ (to make $M$ explicit) is the partition of $n$ specifying the block sizes in the Jordan canonical form of $m_\ell$. When $\F$ is infinite, the \emph{generic Jordan type} is $P_M=P_{\ell,M}$ for a sufficiently general element $\ell\in\m_\A$ (Definition \ref{genJTdef}).
We can make similar definitions for $\mathbb N$-graded algebras $A$ with $A_0=\F$ by localizing at the maximum ideal, so $\A=A_{\mathfrak{m}_A}$ where $ \mathfrak{m}_A=\oplus_{i\ge 1}A_i$. Evidently, the Jordan type of $a\in A$, the graded algebra, is the same as that of the corresponding element $a\in \A=A_{\mathfrak{m}_A}$. However, for a standard graded algebra (generated by $A_1$ over $A_0\cong\F$), a special role is played by linear elements $a\in A_1$. We explore this in \S \ref{conjHsec} and \S \ref{lefsec} below
\vskip 0.2cm\noindent
\begin{question}For a standard graded local algebra $\A$ are the two Jordan types $J_\A$ defined by a generic element $a\in \A_1$ and the local algebra Jordan type defined by a generic element $a\in \mathfrak{m}_\A$ the same? Of course, the latter is greater or equal $J_A$ in the dominance order (Definition~\ref{dominancedef}). We will show these Jordan types are the same when $\A$ is standard graded and the Hilbert function $H(\A)$ is unimodal (Proposition \ref{SLJTprop}). In the non-standard graded context, the Jordan type of $\A$ over $\ell\in\mathfrak m$ can be strictly greater than $J_A$ \cite[Proposition~3.9]{MCIM}.
\end{question}

\vskip 0.2cm\noindent
\section{Basic properties of Jordan type.}\label{JTypesec}
\subsection{Jordan type of a multiplication map.}\label{JTsec}
Let $\A=(\A,\mathfrak{m},\F)$ be an Artinian local algebra over $\F$ and let $M$ be a finite $\A$-module; in particular $M$ is a finite dimensional vector space over $\F$. For any element $\ell\in\m_\A$, let $m_\ell\colon M\rightarrow M$  denote the multiplication map $m_\ell(x)=\ell\cdot x$. Then $m_\ell$ is a nilpotent $\F$-linear transformation. We will sometimes denote $m_\ell$ by $\times \ell$.
\begin{definition}[Jordan type]\label{JTdef}(See also \cite[Section 3.5]{H-W})
For any element $\ell\in\mathfrak{m}$ its Jordan type is the partition of $\dim_{{\F}}M$, denoted $P_\ell=P_{\ell,M}=(p_1,\ldots,p_s)$, where $p_1\geq \cdots\geq p_s$, whose parts $p_i$ are the block sizes in the Jordan canonical form matrix of the multiplication map $m_\ell$.
\par
If $P_{\ell,M}=(p_1,\ldots,p_s)$ is the Jordan type of $\ell$, then there are elements $z_1,\ldots,z_s\in M$ (depending on $\ell$) such that $\left\{\ell^iz_k \mid 1\leq k\leq s,\, 0\leq i\leq p_k-1 \right\}$ is a $\F$-basis for $M$: we will term this set a \emph{pre-Jordan basis} for $M$. The Jordan blocks of the multiplication $m_\ell$ are determined by the \emph{strings} $S_k=\left\{z_k,\ell z_k,\ldots,\ell^{p_k-1}z_k\right\}$, and $M$ is the direct sum
\begin{equation}\label{stringeq}
M=\langle S_1 \rangle\oplus\cdots\oplus \langle S_s \rangle.
\end{equation}
We say that the same set is a \emph{Jordan basis} or \emph{$\ell$-basis} for $M$ if also, for each $k$, $\ell^{p_k}z_k=0$. In that case the $\langle S_k\rangle$ are cyclic $\F[\ell]$-submodules.\par

If $A$ is a graded Artinian algebra and $M$ is a graded $A$-module, we
say that the string $S_k$ is \emph{homogeneous} if each element
$\ell^iz_k$ is. 
\end{definition}
For simplicity, we state the following Lemma for $\ell\in A_1$, but it has a natural generalization to $\ell\in A_d$, $d\ge 1$.
\begin{lemma}
\label{JThomoglem} 
Let $A$ be a graded Artinian algebra, $M$ a finite-length graded $A$-module, and let $\ell\in A_1$. Then we may choose strings $S_1,\ldots, S_s$ as in Definition \ref{JTdef} defining the unique Jordan type $P_{\ell,M}$ such that
\begin{enumerate}[(i).]
\item Each $z_k$ is homogeneous of a degree $\nu_k$, and $\ell^iz_k$ has degree $\nu_k+i$ for $0\le i<p_k$, but $\ell^{p_k}z_k=0$;
\item  We have 
\[
\dim_\mathsf{k} M_d=\#\{k\mid \nu_k\le d<\nu_k+p_k\}.
\]
\item Any pre-Jordan basis for $M$ as in Definition \ref{JTdef} may be refined to a set satisfying (i) and (ii) by $\mathsf{k}[\ell]$-linear
operations.
\item Given $\ell\in A_1$, the set of pairs of integers $\mathcal{S}_{\ell,M}=\{(p_k,\nu_k), k=1,\ldots ,s\}$ is independent of the choice of the set of strings $\{S_k\}$ decomposing $M$, and satisfying (i). 
\end{enumerate}
\end{lemma}
\begin{proof}  
This is the result of applying the standard method of finding a good basis of a vector space $V=A$ in which a transformation $T=m_\ell=\times\ell$ will have Jordan normal form \cite[\S VII.7]{Ga}. We briefly sketch the proof. For (\textit{i}), choose an element $z_1$ such that ${\ell^{p_1-1}z_1\ne0}$. Since $\ell$ is homogeneous, we can assume that $z_1$ is also homogeneous and make $S_1=\{z_1,\ell z_1,\ldots,\ell^{p_1-1}z_1\}$. We can construct the remaining strings inductively, choosing in each step a homogeneous element ${z_{k+1}\in\ker(\times\ell^{p_{k+1}})}$ such that ${\ell^{p_{k+1}-1}z_{k+1}\notin\langle S_1 \rangle\oplus\cdots\oplus \langle S_k \rangle}$ -- this is always possible, given that, by construction, ${\im(\times\ell^{p_{k+1}})\subseteq\langle S_1 \rangle\oplus\cdots\oplus \langle S_k \rangle}$, but ${\im(\times\ell^{p_{k+1}-1})\not\subseteq\langle S_1 \rangle\oplus\cdots\oplus \langle S_k \rangle}$, so we can take $z_{k+1}'$ such that ${\ell^{p_{k+1}-1}z_{k+1}'\notin\langle S_1 \rangle\oplus\cdots\oplus \langle S_k \rangle}$,
and consider ${u\in\langle S_1 \rangle\oplus\cdots\oplus \langle S_k 
\rangle}$ such that ${\ell^{p_{k+1}}z_{k+1}'=\ell^{p_{k+1}}u}$.\par Then ${z_{k+1}'-u\in\ker(\times\ell^{p_{k+1}})}$, and we may take $z_{k+1}$ to be a homogeneous component of ${z_{k+1}'-u}$ satisfying ${\ell^{p_{k+1}-1}z_{k+1}\notin\langle S_1 \rangle\oplus\cdots\oplus \langle S_k \rangle}$.

Alternatively, we could start with a Jordan basis ${z_1',\ldots,\ell^{p_1-1}z_1',\ldots,z_s',\ldots,\ell^{p_s-1}z_s'}$, and replace each Jordan chain ${z_k',\ldots,\ell^{p_k-1}z_k'}$ by ${z_k,\ldots,\ell^{p_k-1}z_k}$, where $z_k$ is a homogeneous summand of $z_k'$ satisfying ${\ell^{p_k-1}z_k\ne0}$.
\par 

We easily see that (\textit{ii}) is a direct consequence of (\textit{i}).\par To check (\textit{iii}) we may also construct a new set of strings inductively. The key point here is that if ${S_1,\ldots,S_s}$ is a pre-Jordan basis of $M$ as in Definition \ref{JTdef} then for each $k$, the generator $z_k\notin\im(\times\ell)$, otherwise the lengths of the strings would not match the Jordan type. If ${S_1,\ldots,S_k}$ are already modified into strings $\{S_1',\ldots,S_k'\}$, with generators $z_1' ,\ldots,z_k' $, satisfying (\textit{i}), we can observe that 
\[
\ell^{p_{k+1}}z_{k+1}\in\bigl(\langle S_1'\rangle\oplus\cdots\oplus\langle S_k'\rangle\bigr)\cap\im(\times\ell^{p_{k+1}})=\langle l^iz_b' : i\ge p_{k+1}, 1\le b\le k\rangle.
\]
In particular there exists ${u\in\langle S_1'\rangle\oplus\cdots\oplus\langle S_k'\rangle}$ such that ${\ell^{p_{k+1}}z_{k+1}=\ell^{p_{k+1}}u}$. We now set\par\noindent ${z_{k+1}'=z_{k+1}-u}$ and ${S_{k+1}'=\{z_{k+1}',\ldots,\ell^{p_{k+1}-1}z_{k+1}'\}}$.  \par

For the proof of (iv), let $1\le n_1<\cdots<n_t=s$ be the integers 
defined by the conditions
\[
p_1=p_{n_1}>p_{n_1+1}=p_{n_2}>p_{n_2+1}=p_{n_3}>\cdots,
\]
capturing the places where the partition ${P_{\ell,M}=(p_1,\ldots,p_s)}$ 
drops. Let $S_1,\ldots,S_s$ be any set of strings of a Jordan basis 
satifying (i). Then the classes of $z_1,\ldots,z_{n_1}$ form a 
homogeneous basis of the graded module
\[
\frac{M}{\ker(\times\ell^{p_1-1})}=\frac{\ker(\times\ell^{p_1})}{\ker(\times\ell^{p_1-1})},
\]
so their degrees $\nu_1,\ldots,\nu_{n_1}$ are determined by the 
dimensions of its graded pieces, up to permutation, which shows that the 
pairs $(p_1,\nu_1),\ldots,(p_{n_1},\nu_{n_1})$ are uniquely determined. 
For induction, suppose that the pairs 
$(p_1,\nu_1),\ldots,(p_{n_i},\nu_{n_i})$ are uniquely determined. Then 
the classes of
\[
\ell^{p_1-p_{n_i+1}}z_1,\ldots,\ell^{p_1-p_{n_i+1}}z_{n_1},\ell^{p_{n_1+1}-p_{n_i+1}}z_{n_1+1},\ldots,\ell^{p_{n_1+1}-p_{n_i+1}}z_{n_2},\ldots,z_{n_i+1}\ldots,z_{n_{i+1}}
\]
form a homogeneous basis of the graded module 
$\ker(\times\ell^{p_{n_i+1}})/\ker(\times\ell^{p_{n_i+1}-1})$. Since the 
first pairs are determined, the degrees 
$\nu_{n_i+1},\ldots,\nu_{n_{i+1}}$ are also uniquely determined.
\end{proof}\par
The set of pairs $S_{\ell,M}=\{(p_k,\nu_k)\} $ in Lemma \ref{JThomoglem}(iv) we will later term the \emph{Jordan degree type} of $(M,\ell)$ (Definition \ref{degreeJT-def}).\vskip 0.2cm\par
For an increasing sequence $d_\ell=(d_0\le d_1\le \cdots \le d_j)$ we let $\delta_{d_\ell,i}=d_i-d_{i-1}$ for $0\le i$ with $d_{-1}=0$. For a decreasing sequence $r_\ell=(r_0\ge r_1\ge \cdots \ge r_j)$ we let $\delta_{r_{\ell},i}=r_i-r_{i+1}$.
\par
 \par The following result is well-known, see \cite[Lemma 3.60]{H-W}.
\begin{lemma}\label{JTlem}\vskip 0.2cm\begin{enumerate}[(i).]
\item Let $A$ be an Artinian graded or local algebra with maximum ideal ${\mathfrak{m}}_A$ and highest socle degree $j=j_A$ (so $A_j\not=0$ but $A_i=0$ for $i>j$); and assume $\ell\in {\mathfrak{m}}_A$. Let $M$ be a finite length $A$-module. The increasing dimension sequence
\begin{equation}\label{deq}
d_\ell\colon (0=d_0,d_1,\ldots, d_j,d_{j+1}), \text { where } d_i=\dim_\mathsf{k} M/\ell^{i} M,
\end{equation}
has first difference $\Delta(d_\ell)=(\delta_{d_\ell,1},\delta_{d_\ell,2},\ldots, \delta_{d_\ell, j+1}),$ which satisfies
\begin{equation}\label{Peq}
P_\ell={\Delta(d_\ell})^\vee.
\end{equation}
Here ${\Delta(d_\ell})^\vee$ is the conjugate (exchange rows and columns in the Ferrers diagram) of $\Delta(d_\ell)$.\vskip 0.2cm
\item The (decreasing) rank sequence
\begin{equation}\label{req}
r_\ell\colon (r_0,r_1,\ldots ,r_j,0), \text { where }r_i= \dim_\mathsf{k}(\ell^i\cdot M)=\text { rank }m_{\ell^i} \text { on }M,
\end{equation}
has first difference $\Delta( r_{\ell})=(\delta_{r_\ell,1},\delta_{r_\ell,2},\ldots , \delta_{r_\ell,j})$ which satisfies
\begin{equation}P_\ell^\vee= \Delta( r_{\ell})=\Delta( d_{\ell}).
\end{equation}
\end{enumerate}
\end{lemma}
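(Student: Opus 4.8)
The plan is to reduce both parts of the lemma to a direct computation on a single ``string.'' Using the (non-unique) decomposition $M=\bigoplus_t V_t$ into cyclic $m_\ell$-invariant subspaces recalled in the Definition above — whose lengths are by definition the parts of $P_\ell$ — one can evaluate $d_i$ and $r_i$ block by block; and since $d_\ell$ and $r_\ell$ are defined intrinsically in terms of $\ell$ and $M$ alone, whatever these sums come out to is automatically independent of the decomposition chosen.

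First I would record the model computation. If $V$ is a cyclic $m_\ell$-invariant subspace with $\dim_{\sf k}V=p$, generated by $v$ so that $V=\langle v,\ell v,\ldots,\ell^{p-1}v\rangle$ and $\ell^pv=0$, then $\ell^iV=\langle\ell^iv,\ldots,\ell^{p-1}v\rangle$, hence $\dim_{\sf k}\ell^iV=\max(p-i,0)$ and $\dim_{\sf k}V/\ell^iV=\min(i,p)$. Writing $P_\ell=(p_1\ge p_2\ge\cdots)$ for the string lengths of a fixed decomposition and summing over the $V_t$ gives
\[
r_i=\dim_{\sf k}\ell^iM=\sum_t\max(p_t-i,0),\qquad d_i=\dim_{\sf k}M/\ell^iM=\sum_t\min(i,p_t),
\]
and in particular $r_i+d_i=\dim_{\sf k}M$ for every $i$.

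Next I would take first differences. For part i,
\[
\delta_{d_\ell,i}=d_i-d_{i-1}=\sum_t\bigl(\min(i,p_t)-\min(i-1,p_t)\bigr)=\#\{t:p_t\ge i\},
\]
which is exactly the length of the $i$-th column of the Ferrers diagram of $P_\ell$, i.e.\ the $i$-th part of $P_\ell^\vee$. Thus $\Delta(d_\ell)=P_\ell^\vee$, and since conjugation is an involution on partitions we get $P_\ell=(\Delta(d_\ell))^\vee$, which is \eqref{Peq}. For part ii, the identity $r_i+d_i=\dim_{\sf k}M$ gives $r_{i-1}-r_i=d_i-d_{i-1}$, so the first difference of the rank sequence coincides term by term with that of the dimension sequence; alternatively one reads off $r_i-r_{i+1}=\#\{t:p_t\ge i+1\}$ directly from the block computation. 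Either way $\Delta(r_\ell)=\Delta(d_\ell)=(P_\ell)^\vee$.

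Since the content is essentially this bookkeeping, the step I would flag as the main (and really only) obstacle is one of conventions: aligning the index ranges over which $\Delta(d_\ell)$ and $\Delta(r_\ell)$ are formed so that the two vectors agree entry by entry and each lists exactly the parts of $P_\ell^\vee$ (concretely, one should let $\delta_{r_\ell,i}=r_i-r_{i+1}$ run over $0\le i\le j$). This uses $d_0=0$ and that $\ell^{j+1}M=0$ — which holds because $m_\ell$ is nilpotent on $M$ and ${\mathfrak m}_A^{j+1}=0$ when $A$ has highest socle degree $j$ — so that $d_{j+1}=\dim_{\sf k}M$ and $r_{j+1}=0$, together with the relation $d_i=\dim_{\sf k}M-r_i$. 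As a byproduct, since $d_\ell$ is intrinsic, the formula $P_\ell=\Delta(d_\ell)^\vee$ reproves that the Jordan type $P_\ell$ does not depend on the cyclic decomposition used to define it; and the local case is handled verbatim, with the maximal ideal $\maxA$ in place of ${\mathfrak m}_A$.
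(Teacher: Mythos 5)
Your proof is correct and is the standard block-by-block computation on a cyclic (Jordan) decomposition of $M$; the paper itself gives no proof here, only citing \cite[Lemma 3.60]{H-W}, and your argument is precisely the expected one. Your remark about aligning the index ranges of $\Delta(d_\ell)$ and $\Delta(r_\ell)$ (taking $\delta_{r_\ell,i}=r_i-r_{i+1}$ for $0\le i\le j$) is well taken, since as printed the two difference vectors in the statement have lengths $j+1$ and $j$ respectively; you are also right that one should read $M$ for $A$ in Equation~\eqref{req}.
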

\noindent
{\bf Note.} The Jordan type partition $P_\ell$ has sometimes been called the Segre characteristic of $\ell$ \cite{Sh}. The Weyr canonical form of a multiplication map is a block decomposition ``dual'' to the Jordan canonical form \cite[\S 2.4]{OCV}; the Weyr characteristic is the partition giving the sizes of the blocks in the Weyr form, and is just the conjugate $P_\ell^\vee$ of $P_\ell$. For further discussion of the Weyr form, which may have advantages for some problems, see \cite{Sh,{MaV},OCV, OW}.\par
It is readily seen that the Jordan type of $(A,\ell)$ may depend on the characteristic of $\mathsf{k}$. For example A. Wiebe notes that $\mathsf{k}[x,y,z]/(x^2,y^2,z^2)$ is strong Lefschetz when ${\cha \mathsf{k}\ne2}$, but is not even weak Lefschetz when ${\cha\mathsf{k}=2}$ \cite[Example 2.10]{Wi}. The same holds for $\mathsf{k}[x,y]/(x^p,y^p)$, which is strong Lefschetz for ${\cha \mathsf{k}=0}$ or ${\cha\mathsf{k}>p}$ but is not SL when ${\cha\mathsf{k}=p}$.  This dependence is studied in particular by D. Cook,~II for monomial ideals, or in codimension two \cite{Co1,Co2}. We discuss it for tensor products in Section \ref{tensorprodsec}.
\subsection{Jordan type and Hilbert function for a local algebra.}\label{conjHsec}
\subsubsection{Partitions and the dominance order.}
By a \emph{partition} we mean a weakly decreasing sequence of non-negative integers $P=(p_1,\ldots,p_s)$, $p_1\geq\cdots\geq p_s$. The $p_i$ are called the \emph{parts} of $P$, the \emph{length} of $P$ is the number of its parts $\ell(P)=s$, and the \emph{size} of $P$ is the sum of its parts $|P|=p_1+\cdots+p_s$. We can represent the partition $P$ as a \emph{Ferrers diagram} by which we mean a left justified array of boxes with $p_i$ boxes in the $i^{th}$ row. The \emph{conjugate partition} $P^\vee=(p_1^\vee,\ldots,p_t^\vee)$ is the partition with parts $p_i^\vee=\#\{j \mid p_j\geq i\}$; its Ferrers diagram can be obtained from that of $P$ by reflection about the main diagonal, that is, swapping the rows and columns. For example $P=(2,2,1,1)$ has Ferrers diagram {\tiny $\yng(2,2,1,1)$}; the conjugate $P^\vee=(4,2)$, has Ferrers diagram {\tiny $\yng(4,2)$}.  Note that $\left(P^\vee\right)^\vee=P$.

\begin{definition}[Dominance order]\label{dominancedef}
Given two partitions $P=(p_1,\ldots ,p_s)$ and $ P'=(p'_1,\ldots, p'_t)$ with $p_1\ge \cdots \ge p_s$ and $p'_1\ge\cdots \ge p'_t$, we write
\begin{equation}\label{dominanceeq}
P\le P' \text { if for all } i \text { we have} \sum_{k=1}^i p_k\le\sum_{k=1}^i p'_k.
\end{equation}
\end{definition}
Thus, $(2,2,1,1)<(3,2,1)$ but $(3,3,3)$ and $(4,2,2,1)$ are incomparable. \par
In the dominance partial order we have for two partitions $P,P'$ of $n$ (see \cite[Lemma~6.3.1]{CM})
\begin{equation}\label{conjPeq}
P\le P' \Leftrightarrow P^\vee\ge {P'}^\vee.
\end{equation}
Any sequence $H=(h_0,\ldots,h_j)$ of non-negative integers can be made into a partition by simply rearranging its parts so that they are in non-increasing order. In particular if $H=H(N)$, the Hilbert function of an Artinian module $N$ over an Artinian algebra $A$ and $\max\{h_k\}=r$ (a special case is $N=\A$, a local Artinian algebra) then we form the \emph{conjugate partition of the Hilbert function} 
\begin{equation}\label{conjHF}
H(N)^\vee=(h_0^\vee,\ldots,h_r^\vee) \text { where }h_i^\vee=\#\left\{k\mid h_k\geq i\right\}\text{ in }H(N).
\end{equation}
The following key result says that the Jordan type is always bounded above in the dominance partial order by the conjugate partition of the appropriate Hilbert function.  We will consider modules $M\subset \A^k$ that are subsets of free modules $\A^k$, in particular their Hilbert functions satisfy $H(M)=\bigl(h_0(M),h_1(M),\ldots\bigr)$ with entries only in non-negative degrees.
\begin{theorem}[Jordan type and Hilbert function]
\label{thm:JtIneq}
Let $\A=(\A,\mathfrak{m},\F)$ be a local Artinian algebra over $\F$ 
and let $M\subset \A^k$ be an $\mathcal{A}$-module, with Hilbert function $H(M)$. For any $\ell\in\mathfrak{m}_\A$, its Jordan type satisfies
\begin{equation}
\label{eq:JTA}
P_{\ell,M}\leq H(M)^\vee.
\end{equation}
If $\A$ has weight function $\w$, for which $\A_0=\F$, and if $M$ is a graded module over $\A(\w)$ with $\w$-Hilbert function $H_{\w}(M)$, then for any $\w$-homogeneous element $\ell\in\mathfrak{m}_\A$ its Jordan type also satisfies 
\begin{equation}
\label{eq:JTgA}
P_{\ell,M}\leq H_{\w}(M)^\vee.
\end{equation}
\end{theorem}

\begin{proof} 
To prove Equation \eqref{eq:JTA}, let $\A$ and $M$ be as above, fix any element $\ell$ of $\maxA$ and let $\mathsf{k}[x]$ act on the Artinian $\mathcal{A}$-module $M$ via $x=m_\ell $, multiplication by $\ell$. Recall that the integer $j(N)$ is the socle degree of an $\A$-module $N$. For $a\in \A$ the order $\nu(a)$ is the largest integer $\nu$ such that $a\in(\mathfrak{m}_{\A})^\nu$, and we extend the definition to elements of modules $M\subset \A^k$: the order of $m=(a_1,\ldots, a_k)$ is the minimum order $\min\{a_1,\ldots, a_k\}$. We will denote by $\{\{H(N)\}\}$ the multiset of integers in the sequence $H(N)$, with their multiplicities specified. We will show\vskip 0.2cm\noindent
{\bf Claim.} For any $T=\mathsf{k}[x]$ submodule $N$ of the $\mathcal{A}$-module $M$ where $\dim_\mathsf{k}M=m$, $\dim_\mathsf{k}N=n$, we have $P_{\ell}(N)\le H(N)^\vee$.\par
\noindent
\textit{Proof of Claim.} We proceed by induction on the pairs $(m,n)$,  where we let $(m,n)<(m',n') $ if $m<m'$ or if $m=m'$ and $n<n'$. The Claim is true for all pairs $(m,n)$ with $m=1$ or $n=1$. Fix $(m',n')$ and suppose the Claim is true for all pairs $(m,n)<(m',n')$, let $M$ be an $\mathcal{A}$-module of length $m'$ and $N$ a $T$ submodule of length $n'$. Let $S=(a,\ell a,\ell^2 a, \ldots )\subset N$ be a longest string ($\F$-basis of a cyclic $T$-submodule) in $N$: then $S$ has length $p_{1,\ell}$ no greater than $j(N)+1$, the largest part of $H(N)^\vee$, and $\langle S\rangle$ is a direct $T$-summand of $N$ (as it has maximum length). Consider a complementary  $T$ submodule $N'\subset N$ with $N'\cong N/\langle S\rangle$ and $ N'\oplus_T\langle S\rangle=N$ and choose $N'$ of maximum possible order.  Denote by $\{\{H(N)\}\}$ the multiset of integers from $H(N)$. Then $\{\{H(N')\}\}$ is obtained from $\{\{H(N)\}\}$ by decreasing $p=p_{1,\ell}$ entries of $\{\{H(N)\}\}$ by one. No entry $H(N)_i$ is decreased by 2 in $H(N')_i$ as the orders of $a,\ell a,\ell^2a,\ldots $ are strictly increasing. Evidently, $P_{\ell,N}=(p,P_{\ell,N'})$ - we simply adjoin a largest part $p$ to the Jordan partition $P_{\ell,N'}$. Since $P_{\ell,N'}\le H(N')^\vee$ by the induction assumption, we have by Equation \eqref{conjPeq}
\begin{equation}\label{keyHNeq}
P_{\ell,N'}^\vee \ge \bigl(H(N')^\vee\bigr)^\vee.
\end{equation} 
The partition $P_{\ell,N}^\vee $ is obtained from $P_{\ell,N'}^\vee$ by increasing the first $p$ entries by one. The multiset $\{\{H(N)\}\}$
 is obtained from $\{\{H(N')\}\}$ by increasing some subset of $p$ entries by one. Thus we have (here $\bigl(H(N)^\vee\bigr)^\vee$ is just the integers in the multiset $\{\{H(N)\}\}$ rearranged in non-increasing order to form a partition).
\begin{equation}\label{keyHN2eq}
P_{\ell, N'}^\vee \ge \bigl(H(N')^\vee\bigr)^\vee \Rightarrow P_{\ell, N}^\vee \ge
\bigl(H(N)^\vee\bigr)^\vee
\end{equation}
in the dominance partial order, since the sum of the first $k$ entries for $P_{\ell,N}^\vee$ remains greater than the analogous sum of the first $k$ entries of $H(N)$, for each $k=1,\ldots, j(N)+1$. We are using here that the difference $H(N)_i-H(N')_i\le 1$ for each $i$. By conjugating the partitions in Equation \eqref{keyHN2eq} and applying Equation \eqref{conjPeq} we have shown $P_{\ell,N}\le H(N)^\vee$. This completes the induction step.

For Equation \eqref{eq:JTgA}, i.e. the graded version, let $\w$ be any weight function on $\A$ as above, so that $A=\A(\w)$ is an $\mathbb N$-graded Artinian algebra over $\F$, and let $M$ be any finite $\A$-module $M\subset \A^k$ with highest socle degree $j_M$.  

Fix a $\w$-homogeneous element $\ell\in\mathfrak{m}_A$, and let $M=\oplus_i\langle S_i\rangle$ as in Equation \eqref{stringeq}, where each $S_i$ is a $\w$-homogeneous Jordan basis of the string $\langle S_i\rangle $ of $M$, $p_i=|S_i|$ and $p_1\geq\cdots\geq p_s$. The Jordan type of $\ell$ acting on $ M$ is $P_{\ell,M}=(p_1,\ldots,p_s)$.  For each $1\leq i\leq s$ and each integer $u$, define the new integer $m(i,u)$ 
to be the number of elements of degree $u$ in the disjoint union of strings
$S_1\sqcup\cdots\sqcup S_i$; clearly we have 
\[\sum_{u\geq 0}m(i,u)=|S_1\sqcup\cdots\sqcup S_i|=p_1+\cdots+p_i.\]

Recall that $H(M)^\vee=(q_1,\ldots,q_t)$ where 
\[q_i=\# \underbrace{\left\{u\mid \dim_{{\F}}M_u\geq i\right\}}_{T_i}=|T_i|.\] 
For each index $1\leq i\leq t$, and each integer $u$, define the new integer $n(i,u)$ to be the number of times the index $u$ appears in the multi-set 
$T_1\cup\cdots\cup T_i$; clearly we have 
\[\sum_{u\geq 0}n(i,u)=|T_1|+\cdots+|T_i|=q_1+\cdots+q_i.\]

Since no two elements of the same string have the same degree, we see that $0\leq m(i,u)\leq i$.  Since $\dim_\mathsf{k}(M_u)\geq m(i,u)$, the index $u$ must appear in $T_{m(i,u)}$, as well as in $T_{m(i,u)-1},\ldots,T_1$.  Thus we see that 
\begin{equation}
\label{mns}
m(i,u)\leq n(i,u).
\end{equation}
Summing Equation \eqref{mns} over all $u$, gives Equation \ref{eq:JTgA}, and completes the proof of the Theorem.
\end{proof}

In the graded case we will see that those linear forms $\ell\in A_1$ which achieve the bounds in Theorem \ref{thm:JtIneq} are exactly those with strong Lefschetz property (Proposition \ref{prop:LefHilb}).
\begin{example}\label{robbianoex}
We thank Lorenzo Robbiano for pointing out that we needed to make explicit our assumption that $A_0=\mathsf{k}$ in Theorem \ref{thm:JtIneq}. He provided the following example when $A_0\not=\mathsf{k}$. Let $\mathsf{k}=\mathbb Q$ be the rationals, set $P=\mathbb Q[x]$, let $M$ be the maximal ideal $M=(x^2+1)\subset P$ and denote by $K=P/M$ the quotient field. Consider the Artinian ring $A=P/M^2$ over $\mathbb Q$. Let $\mathfrak{m}_A$ be the maximal ideal of $A$, and consider the associated graded algebra $G = \Gr_{\mathfrak{m}_A}(A)$. 
It satisfies $\dim_K G=2$, with Hilbert function $H_K(G)=(1,1)$; the Jordan type of the mutiplication $m_x$ is $P_{x,G}=(2)=H_K(G)^\vee$. However, over $\mathbb Q$ we have $\dim_{\mathbb Q}(A) = 4$, $\dim_{\mathbb Q}\mathfrak{m}_A = 2$, so the Hilbert function $H_{\mathbb Q}(A)=(2,2)$. The multiplication $m_x$ on $A$ has the string $1\to x\to x^2\to x^3$, so the Jordan type $P_{x,A}= (4)>H_{\mathbb Q}(A)^\vee=(2,2)^\vee=(2,2).$
\end{example}
\begin{remark}\label{heighttworem} The Hilbert function $H=(1,2,\ldots )$ of a codimension two algebra is unimodal, and has no strict increases; it follows that $H$ is determined by the partition $P(H)$ (a reordering of $H$). This is no longer true in height three.
\end{remark}

\subsection{Lefschetz properties and Jordan type.}\label{lefsec}
We first recall in Definition \ref{def:WLSL} various traditional notions of Lefschetz properties for graded Artinian algebras; see for example \cite[Definition 3.8ff]{H-W}, or \cite[Definition 2.4]{MiNa}. It is well known that if $A$ is a standard graded Artinian algebra with Hilbert function $H=H(A)$ then $\ell\in A_1$ is a strong Lefschetz element for $A$ if and only if $P_\ell=H(A)^\vee$, the conjugate partition (exchange rows and columns in the Ferrers diagram) of $H(A)$ regarded as a partition (Proposition \ref{prop:LefHilb} refining \cite[Prop. 3.64]{H-W}). The element $\ell$ is weak Lefschetz for $A$ if the number of parts of $P_\ell$ is the Sperner number of $A$, the maximum value of the Hilbert function $H(A)$ (Lemma~\ref{WLJTlem}). We then define for a local algebra $\A$ the notion of its having an element of \emph{strong Lefschetz Jordan type} (SLJT) (Definition \ref{SLJTdef}); we give examples and show that if $H(A)$ is unimodal then $A$ has SLJT implies that the algebra is strong Lefschetz (Proposition~\ref{SLJTprop}).
\begin{definition}
\label{def:WLSL}
Let $A$ be a graded Artinian algebra of socle degree $j$ (not necessarily standard graded), and let $\ell\in A_1$ be a linear form. We say that $\ell$ is 
\begin{enumerate}[(i).]
\item (WL) weak Lefschetz if the multiplication maps $\times\ell\colon A_i\rightarrow A_{i+1}$ have maximal rank for each degree $0\leq i< j$.
\item (SL) strong Lefschetz if the multiplication maps $\times\ell^b\colon A_i\rightarrow A_{i+b}$ have maximal rank for each degree $0\leq i< j$ and each integer $b\geq 0$.
\end{enumerate}
$A$ is said to have the weak, resp.\ strong Lefschetz property if it has a weak, resp.\ strong Lefschetz element $\ell\in A_1$.
\end{definition}
For a survey (as of 2013) of the Lefschetz properties for graded Artinian algebras, see \cite{MiNa}. For more recent discussion see relevant portions of \cite{H-W}.
\begin{remark}
\label{rem:Narrow}
T. Harima and J. Watanabe refer to the strong Lefschetz property in the \emph{narrow sense} to mean that for every degree $0\leq i\leq \left\lfloor\frac{j}{2}\right\rfloor$ the multiplication maps $\times\ell^{j-2i}\colon A_i\rightarrow A_{j-i}$ are isomorphisms (see \cite[Definition 3.18]{H-W}). If the Hilbert function $H(A)$ is symmetric, i.e.\ $h_i=h_{j-i}$ for each $i$, then SL in the narrow sense is equivalent to SL in Definition \ref{def:WLSL}.
\end{remark}
Note that a necessary condition for $A$ to have SL is that its Hilbert function $H(A)$ is unimodal. The following result, which is a generalization of Proposition 3.64 in \cite{H-W}, relates the strong Lefschetz property and Jordan type.

\begin{proposition}
\label{prop:LefHilb}
Let $A$ be a (possibly non-standard) graded Artinian algebra and $\ell\in A_1$. Then the following statements are equivalent:
\begin{enumerate}[(i).]
\item For each integer $b$, the multiplication maps $\times\ell^b\colon A_i\rightarrow A_{i+b}$ have maximal rank in each degree $i$. (That is, $\ell$ is SL.)
\item The Jordan type of $\ell$ is equal to the conjugate partition of the Hilbert function, i.e.
\[
P_\ell=H(A)^\vee.
\]
\item There is a set of strings $S_1,\ldots,S_s$ as in Equation \eqref{stringeq}, composed of homogeneous elements, for the multiplication map $\times\ell^b\colon A\rightarrow A$ such that for each degree $u$ and each integer $i$ we have the equivalence 
\begin{equation}\label{emptyintersecteq}
\dim_\mathsf{k}A_u\geq i \ \ \Leftrightarrow \ \ A_u\cap S_a\neq\emptyset,\ \forall \ a\leq i.\end{equation}
\end{enumerate}
\end{proposition}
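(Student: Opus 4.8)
The plan is to prove the three statements equivalent by a cyclic chain of implications, leaning heavily on the string/Jordan-type description from Lemma~\ref{JTlem} and Definition~\ref{stringdef}. Throughout, write $H=H(A)$, let $\ell\in A_1$, and regard $A$ as a $\mathsf k[t]$-module via $t\mapsto m_\ell$, so that $A=\bigoplus_{i=1}^r\langle S_i\rangle$ is a decomposition into Jordan strings, each string $S_i$ consisting of homogeneous elements of consecutive degrees. The key bookkeeping observation is that a string $S_i$ of length $p_i$ contributes exactly one basis element to each of $p_i$ consecutive degrees; hence $h_j=\#\{i : S_i \text{ has a bead in degree } j\}$, and $\rk(m_{\ell^b}\colon A_j\to A_{j+b})$ counts precisely those strings that have beads in both degrees $j$ and $j+b$ — equivalently, strings of length $>b$ whose bead-range includes $[j,j+b]$.

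\textbf{(i) $\Rightarrow$ (ii).} Assume every $\times\ell^b\colon A_i\to A_{i+b}$ has maximal rank. I want to show the strings can be taken to be ``as long as possible'' given $H$, i.e.\ that $P_\ell={\sf P}(H)$, hence (since ${\sf P}(H)^\vee = P_H^{\vee\vee\vee}$... more carefully) that $P_\ell = P_H^\vee$ where $P_H$ denotes $H$ rearranged into a partition. By Theorem~\ref{sfPeqthm}(i) we already have $P_\ell\le {\sf P}(H)\le P(H)=P_H^\vee$, so it suffices to prove $P_\ell\ge P_H^\vee$, equivalently (by \eqref{conjPeq}) that $P_\ell^\vee = \Delta(r_\ell)\le H$-rearranged, i.e.\ that $\rk m_{\ell^k}\ge \sum_{i=0}^{j-k}\min\{h_i,\dots,h_{i+k}\}$ for all $k$. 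For this I unwind $\rk m_{\ell^k}$ as a sum over degrees of the ranks of $\times\ell^{k}$ restricted to each $A_i$ (using $\rk m_{\ell^k}=\sum_i \rk(\times\ell^{\,k}|_{A_i})$, valid because $A$ is graded), then invoke the maximal-rank hypothesis degree by degree: maximal rank of $\times\ell^b\colon A_i\to A_{i+b}$ is $\min\{h_i,h_{i+b}\}$, but to get the inner min over the whole window $[i,i+k]$ one composes and uses that rank of a composite is at most the minimum of the ranks, together with maximality forcing equality. Assembling these equalities gives exactly $\rk m_{\ell^k}=\sum_{i}\min\{h_i,\dots,h_{i+k}\}$, which is the equality case of \eqref{hilbineq}, forcing $P_\ell^\vee={\sf P}(H)^\vee$ in the unimodal-looking count — I expect this is where care is needed, because when $H$ is non-unimodal ${\sf P}(H)\ne P(H)$, yet maximal rank in \emph{all} degrees actually does force unimodality of $H$ (as noted after Definition~\ref{gSLdef}), so ${\sf P}(H)=P(H)=P_H^\vee$ and we conclude $P_\ell=P_H^\vee$.

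\textbf{(ii) $\Rightarrow$ (iii) and (iii) $\Rightarrow$ (i).} For (ii)$\Rightarrow$(iii): if $P_\ell=P_H^\vee$, then by the equality case of the string count the bar graph of $H$ is filled by the strings $S_1,\dots,S_r$ in the canonical ``left-justified'' way — the longest strings occupy the full width of the bar graph at each height. Statement (iii) says $\dim_{\sf k}A_j\ge i$ iff $A_j$ meets every string $S_a$ with $a\le i$ (indexing strings in non-increasing length); this is just the assertion that the $i$-th tallest column of the $H$-bar graph, read off at degree $j$, is occupied precisely by the beads of the $i$ longest strings — immediate from the left-justified filling. For (iii)$\Rightarrow$(i): fix $b$ and degree $i$; $\rk(\times\ell^b\colon A_i\to A_{i+b})$ equals the number of strings $S_a$ having beads in both degree $i$ and degree $i+b$. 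Using (iii) at degrees $i$ and $i+b$ to identify which strings meet $A_i$ and which meet $A_{i+b}$, and using that each string's set of occupied degrees is an \emph{interval}, one checks the overlap count is exactly $\min\{h_i,h_{i+b}\}$ — the maximal possible rank. I would write this last step as a short interval-intersection argument.

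\textbf{Main obstacle.} The delicate point is the non-standard / non-unimodal Hilbert function bookkeeping: one must be careful that maximal rank in all degrees genuinely forces $H$ unimodal (so that ${\sf P}(H)=P(H)$ and statement (ii)'s ``$P_H^\vee$'' is unambiguous), and that the string-interval structure — a string occupies a set of \emph{consecutive} degrees — is used correctly when $A$ is not generated in degree $1$. I would isolate a small lemma: ``if $\ell\in A_1$ and $\times\ell^b$ has maximal rank in every degree for every $b$, then $H(A)$ is unimodal,'' proving it by contradiction from a strict valley $h_{i-1}>h_i<h_{i+1}$ which obstructs maximal rank of $\times\ell^2\colon A_{i-1}\to A_{i+1}$. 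Everything else is the (routine but bar-graph-intensive) translation between ranks of power maps, the first-difference sequences of Lemma~\ref{JTlem}, and the geometry of how Jordan strings tile the Hilbert-function bar graph.
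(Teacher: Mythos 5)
Your proof of (i)\,$\Rightarrow$\,(ii) takes a genuinely different route from the paper's. The paper works directly with the Jordan strings: it defines $T_i=\{u : \dim_{\sf k}A_u\ge i\}$ (the ``rows'' of the bar graph of $H$) and constructs, by induction on $i$, an injective map $\sigma$ sending each row $T_i$ into the degree-set $\deg(S_{\sigma(i)})$ of a distinct Jordan string, whence $n_i\le p_{\sigma(i)}$; summing and comparing to $\dim_{\sf k}A$ then forces $n_i=p_i$ for all $i$. You instead pass through Lemma~\ref{JTlem}: you observe that gradedness gives $\rk m_{\ell^b}=\sum_i\rk(\ell^b|_{A_i})$, that the maximal-rank hypothesis (i) makes each summand $\min\{h_i,h_{i+b}\}$, that (i) forces unimodality of $H$ (via the factoring of $\ell^2$ through a valley $A_i$), and that under unimodality $\min\{h_i,h_{i+b}\}=\min\{h_i,\dots,h_{i+b}\}$, so equality holds in~\eqref{hilbineq}; since the ranks $\rk m_{\ell^k}$ determine $P_\ell$ by Lemma~\ref{JTlem}, and the equality case is by definition ${\sf P}(H)=P(H)=H^\vee$ when $H$ is unimodal, you get $P_\ell=P_H^\vee$. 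This is a valid and arguably more computational argument; what it buys is that it avoids the explicit injective $\sigma$ and reuses machinery (Lemma~\ref{JTlem}, Theorem~\ref{sfPeqthm}, Equation~\eqref{hilbineq}) already in the paper, at the cost of needing the unimodality observation up front --- which you correctly identified as the delicate point. (Your write-up has a minor $\le$/$\ge$ slip in the sentence ``$P_\ell^\vee=\Delta(r_\ell)\le H$-rearranged, i.e.\ $\rk m_{\ell^k}\ge\sum\dots$''; the two inequalities are not equivalent as stated, but the intended equalities are what your argument actually produces, so this is a typo rather than a flaw.)

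For (ii)\,$\Rightarrow$\,(iii) and (iii)\,$\Rightarrow$\,(i) you are in essence following the same ``strings tile the bar graph'' picture the paper uses, but your account is a sketch rather than a proof. In particular, the assertion that $P_\ell=P_H^\vee$ forces a ``left-justified'' tiling --- that the $i$-th longest string occupies exactly the degrees $j$ with $h_j\ge i$ --- is the whole content of (ii)\,$\Rightarrow$\,(iii), and the paper proves it by a downward induction on $i$ (if $\dim_{\sf k}A_j\ge r$ then every string meets $A_j$; then peel off from the top). You should not treat this as ``immediate'': the lengths of the strings are determined by (ii), but a priori their degree-offsets are not, and it takes the inductive argument (or something equivalent) to pin them down. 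Likewise, (iii)\,$\Rightarrow$\,(i) needs the interval-intersection count to actually be spelled out, as the paper does with $m=\min\{\dim A_i,\dim A_{i+b}\}$ strings meeting both degrees. You flagged these as ``routine but bar-graph-intensive,'' which is fair; they are the parts of the paper's proof you would be reproducing rather than replacing.
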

\begin{proof}
For each integer $i\in [1, j_A]$ define the set of indices $T_i=\left\{u \mid \dim A_u \geq i\right\},$ and let $n_i=\# T_i$. We denote by $t$ the maximum $i$ such that $T_i$ is non-empty. Let $S_1,\ldots,S_s$ as in Equation \eqref{stringeq} be strings for the action of $\ell$ on $A$, arranged so that their lengths $p_i=\# S_i$ are non-increasing, and with $S_i$ having generator $z_i$.  Note that since the map $\times\ell:A\to A$ respects the grading of $A$, its kernel is a homogeneous ideal, and therefore we can construct a Jordan basis composed of homogeneous elements, and we may assume that the elements in each string $S_i$ are homogeneous. To these strings we associate their degree sets $\deg(S_1),\ldots,\deg(S_s)$, where 
\[
\deg(S_i)=\left\{\deg z_i,\ldots,\deg \ell^{p_i-1}z_i\right\}.
\]
$(i)\Rightarrow (ii)$: Assume $\times\ell^b\colon A_i\rightarrow A_{i+b}$ has maximal rank for each $i,b$. We want to show that $t=s$ and $p_i=n_i$ for $1\leq i\leq s$.
\begin{uclaim}
There is an injective function $\sigma\colon \left\{1,\ldots,t\right\}\rightarrow \left\{1,\ldots, s\right\}$ such that for each index $i\in [1,t]$, we have 
\[
T_i\subseteq \deg(S_{\sigma(i)}).
\]
\end{uclaim}
Note that the claim implies that $n_i\leq p_{\sigma(i)}$ for $1\leq i\leq t\leq s$. Then we have 
\begin{equation*}
\dim_\mathsf{k}A= \sum_{i=1}^tn_i \leq \sum_{i=1}^tp_{\sigma(i)} \leq \sum_{i=1}^rp_i 
  = \dim_\mathsf{k} A
\end{equation*}
which implies that $t=s$ and $n_i=p_i$ for all $1\leq i\leq t$, as desired. 
\begin{proof}[Proof of Claim] 
We proceed by induction on $i$. We have $T_1=\left\{j \mid \dim_\mathsf{k}(A_j)\geq 1\right\}=\left\{0,\ldots,d\right\}$. Since $\times\ell^d\colon A_0\rightarrow A_d$ has full rank, we conclude that $T_1$ must belong to the degree sequence of some Jordan string, say $S_{\sigma(1)}$. Inductively, assume that we have defined an injective function $\sigma\colon\left\{1,\ldots,i-1\right\}\rightarrow \left\{1,\ldots,s\right\}$ for which 
\begin{equation}
\label{eq:TStrings}
\begin{cases} 
&T_1\subseteq  \deg(S_{\sigma(1)})\\
&\cdots  \\
&T_{i-1}\subseteq  \deg(S_{\sigma(i-1)})\\
\end{cases}.
\end{equation}
Write $T_i=\left\{u \mid \dim_\mathsf{k}A_u\geq i\right\}=\left\{u_1<\cdots < u_{n_i}\right\}$. By our assumption, the multiplication map 
\[
\times\ell^{u_{n_i}-u_1}\colon A_{u_1}\rightarrow A_{u_{n_i}}
\]
has rank at least $i$, hence there are at least $i$ distinct Jordan strings which meet both $A_{u_1}$ and $A_{u_{n_i}}$. Since there are only $(i-1)$ strings appearing in Equation \eqref{eq:TStrings}, there must be one not listed, call it $S_{\sigma(i)}$ for which $T_i\subseteq\deg(S_{\sigma(i)})$. This completes the induction step and proves the claim. 
\end{proof}\vskip 0.2cm
$(ii)\Rightarrow (iii)$: Assume that $P_\ell=H(A)^\vee$. Then $s=t$ and $p_i=n_i=\#\left\{u \mid \dim_\mathsf{k}A_u\geq i\right\}$. Clearly, if $A_u\cap S_a\neq \emptyset$ for all $a\leq i$ then $\dim_\mathsf{k}A_u\geq i$. We prove the other implication by downward induction on the integers $i\leq s$. For the base case, if $\dim_\mathsf{k}A_u\geq s$, then $A_u$ must contain exactly one element from each Jordan string, hence $A_u\cap S_a\neq \emptyset \ $ for all $a\leq s$. For the inductive step, assume the implication $\Rightarrow$ of (iii) for indices greater than $i$, and suppose that $\dim_\mathsf{k}A_u\geq i$. If $\dim_\mathsf{k}A_u\geq i+1$ then by the induction hypothesis $A_u\cap S_a\neq \emptyset $ for all $ a\leq i+1$. On the other hand if $\dim_\mathsf{k}A_u=i$, then $A_u\cap S_a=\emptyset$ for all $ a\geq i+1$. Indeed, for each index $m\geq i+1$, 
\[
\dim_\mathsf{k}A_u\geq m \ \Rightarrow \ A_u\cap S_m\neq\emptyset.
\]
By our assumption there are exactly $p_m$ such indices $u$, hence if $\dim_\mathsf{k}A_u=i<m$ then $A_u\cap S_m=\emptyset$. So if $\dim_\mathsf{k}A_u=i$ we must have $A_u\cap S_a\neq \emptyset $ for all $ a\leq i$. This completes the induction step.\vskip 0.2cm
$(iii)\Rightarrow (i)$: Assume that for each $1\leq i\leq t$ we have the equivalence 
\[
\dim_\mathsf{k}A_u\geq i \ \Leftrightarrow \ A_u\cap S_a\neq \emptyset \ \forall \ a\leq i.
\]
Fix an index $i\in[1,t]$ and an integer $b$, and consider the multiplication map $\times\ell^b\colon A_i\rightarrow A_{i+b}$. Let $m=\min\left\{\dim_\mathsf{k}A_i,\,\dim_\mathsf{k}A_{i+b}\right\}$. Then 
\[
\dim_\mathsf{k}A_{i},\,\dim_\mathsf{k}A_{i+b}\geq m
\]
implies that the $m$ Jordan strings $S_1,\ldots,S_m$ each intersect both $A_i$ and $A_{i+b}$, which in turn implies that $\times\ell^b\colon A_i\rightarrow A_{i+b}$ has rank $m$.
\end{proof}\par

Recall that for $A$ graded Artinian the \emph{Sperner number} $\mathrm{Sperner}(A)=\max\{H(A)_i\mid i\in [0,j]\}$ \cite[\S 2.3.4]{H-W}. For a local ring $\mathcal{A}$, the $\mathrm{Sperner}(\mathcal{A})= \max\{ \mu (\maxA^i) \mid i\in [0,j]\}$ where $\mu (I)=\#$ minimal generators of $I$.
\begin{lemma}\cite[Proposition 3.5]{H-W}\label{WLJTlem} 
When the Hilbert function $H(A)$ for a standard graded Artinian algebra $A$ is unimodal and symmetric then $\ell\in A_1$ is weak Lefschetz for $A$ if and only if $\dim_\mathsf{k}A/\ell A=\mathrm{Sperner}(A)$ or, equivalently, if $P_\ell$ has $\mathrm{Sperner}(A)$ parts.
\end{lemma}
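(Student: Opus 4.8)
The plan is to translate both conditions in the statement into one inequality, $\dim_{\sf k}A/\ell A\ge\mathrm{Sperner}(A)$, and then identify when equality forces maximal rank in every degree. First I would dispose of the word ``equivalently'': since $m_\ell$ is nilpotent on $A$, the number of parts of $P_\ell$ (the number of Jordan blocks of $m_\ell$) equals $\dim_{\sf k}\ker m_\ell=\dim_{\sf k}A-\rk m_\ell=\dim_{\sf k}A/\ell A$ — this also follows from Lemma~\ref{JTlem}(i), the largest part of $\Delta(d_\ell)$ being $\delta_{d_\ell,1}=d_1-d_0=\dim_{\sf k}A/\ell A$. So ``$P_\ell$ has $\mathrm{Sperner}(A)$ parts'' and ``$\dim_{\sf k}A/\ell A=\mathrm{Sperner}(A)$'' are literally the same condition, and it remains to prove that either is equivalent to the weak Lefschetz property.

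Write $h_i=\dim_{\sf k}A_i$ and $j=j_A$. As $\ell\in A_1$, the map $m_\ell$ is homogeneous of degree one and decomposes as $\bigoplus_{i=0}^{j-1}\bigl(m_\ell|_{A_i}\colon A_i\to A_{i+1}\bigr)$, so $\rk m_\ell=\sum_{i=0}^{j-1}\rk(m_\ell|_{A_i})$, with the obvious bound $\rk(m_\ell|_{A_i})\le\min\{h_i,h_{i+1}\}$ in each degree. Now I would invoke unimodality of $H(A)$: choose $c$ with $h_c=\mathrm{Sperner}(A)$, $h_0\le\cdots\le h_c$ and $h_c\ge\cdots\ge h_j$. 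Then $\min\{h_i,h_{i+1}\}=h_i$ for $i<c$ and $=h_{i+1}$ for $i\ge c$, so a telescoping gives
\[
\sum_{i=0}^{j-1}\min\{h_i,h_{i+1}\}=\sum_{i=0}^{c-1}h_i+\sum_{i=c+1}^{j}h_i=\Bigl(\sum_{i=0}^{j}h_i\Bigr)-h_c=\dim_{\sf k}A-\mathrm{Sperner}(A).
\]

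Putting the pieces together, $\dim_{\sf k}A/\ell A=\dim_{\sf k}A-\rk m_\ell=\sum_{i=0}^{j}h_i-\sum_{i=0}^{j-1}\rk(m_\ell|_{A_i})\ge\mathrm{Sperner}(A)$ unconditionally; and because the right-hand side is reached by summing the termwise inequalities $\rk(m_\ell|_{A_i})\le\min\{h_i,h_{i+1}\}$, equality $\dim_{\sf k}A/\ell A=\mathrm{Sperner}(A)$ holds if and only if $\rk(m_\ell|_{A_i})=\min\{h_i,h_{i+1}\}$ for all $i\in[0,j-1]$, that is, if and only if every $m_\ell\colon A_i\to A_{i+1}$ has maximal rank — precisely the weak Lefschetz property of $\ell$ in the sense of Definition~\ref{gSLdef}. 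This completes the argument. There is no real obstacle; the only care needed is the bookkeeping when $H(A)$ has a plateau at its maximum (so $c$ is not unique) or when $c=0$ or $c=j$, and one notes along the way that only unimodality, and not symmetry, is actually used.
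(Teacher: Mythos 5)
Your proof is correct; it is the standard rank-counting argument, and the paper supplies no in-paper proof of Lemma~\ref{WLJTlem}, citing \cite[Proposition 3.5]{H-W} instead, so there is nothing internal to compare against. Your side remark that only unimodality (and not symmetry) of $H(A)$ is actually used in the argument is also accurate.
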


\subsubsection{Lefschetz properties for local algebras.}\label{SLJTsec}
Here we use Jordan type to extend the strong and weak Lefschetz properties to local Artinian algebras.
\begin{definition}
\label{SLJTdef}
Let $\A=(\A,\mathfrak{m},\F)$ be a local Artinian algebra over $\F$ with Hilbert function $H(\A)$. We say that an element $\ell\in\mathfrak{m}$ has
\begin{enumerate}[(i).]
\item (SLJT) strong Lefschetz Jordan type if $P_\ell=H(\A)^\vee$.
\item (WLJT) weak Lefschetz Jordan type if $P_\ell$ has $\mathrm{Sperner}(\A)$ parts.\\
\end{enumerate}
Additionally if $\A$ is graded via a weight function $\w$ with $\w$-Hilbert function $H_{\w}(\A)$, then we say that a $\w$-homogeneous element $\ell\in\mathfrak{m}$ has
\begin{enumerate}[(i).]
\item[iii.] \emph{ $\w$-strong Lefschetz Jordan type} ($\w$-SLJT) if $P_\ell=H_{\w}(\A)^\vee$. 
\end{enumerate}
We say that $\A$ has SLJT, resp.\ WLJT, resp.\ $\w$-SLJT if it has an element $\ell\in\mathfrak{m}$ of that type.
\end{definition}
\begin{remark}
\label{rem:SLvsSLJT}
Note that if $\A$ is graded with weight function $\w$, and $H(\A)^\vee<H_{\w}(\A)^\vee$, then $\A$ cannot possibly be $\w$-SLJT and $\A(\w)$ cannot possibly be SL, while SLJT may or may not hold. On the other hand in the standard graded case we have $H(\A)=H_{\w}(\A)$, and hence $\w$-SLJT and SLJT are equivalent conditions on $\A$. Evidently, in the standard graded case the SL condition on $\A(\w)$ implies the SLJT condition on $\A$. The next result shows that the converse holds under the additional assumption that the Hilbert function $H(\A)$ is unimodal.
\end{remark}
\begin{proposition}\label{SLJTprop} 
Assume that $A$ is a standard graded Artinian algebra and $H(A)$ is unimodal. Then $A$ has an element of strong Lefschetz Jordan type if and only if $A$ has a strong Lefschetz element.
\end{proposition}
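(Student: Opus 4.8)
The plan is to prove Proposition \ref{SLJTprop} by exploiting the unimodality of $H(A)$ together with Proposition \ref{prop:LefHilb} and the semicontinuity of Jordan type (Lemma \ref{semicontlem}). One direction is immediate: if $A$ is strong Lefschetz, then by Lemma \ref{SLJTlem} a strong Lefschetz linear form $\ell\in A_1$ has $P_\ell=H(A)^\vee$, so $\ell$ has (linear) SLJT and in particular $A$ has an element of SLJT. The content is the converse.

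So suppose $\ell\in A^+=\bigoplus_{i\ge 1}A_i$ has $P_\ell=H(A)^\vee$. Write $\ell=\ell_1+\ell_2+\cdots+\ell_j$ with $\ell_i\in A_i$. The first key step is to observe that since $H(A)$ is unimodal, the partition $H(A)^\vee$ is already the maximum partition ${\sf P}(H)$ allowed by Theorem \ref{sfPeqthm}, equation \eqref{hilbineq2}, for a \emph{linear} form; thus the target Jordan type for a linear form is exactly $H(A)^\vee$. Consider the one-parameter family $\ell(s)=s\ell_1+s^2\ell_2+\cdots+s^j\ell_j$ (equivalently, the $\mathbb{G}_m$-action rescaling graded pieces), which for $s\ne 0$ is conjugate to $\ell$ as a multiplication operator — more precisely, the diagonal grading automorphism $\phi_s$ of $A$ sending $a\in A_i$ to $s^i a$ satisfies $\phi_s\circ m_\ell\circ\phi_s^{-1}=m_{\ell(s)}$, so $P_{\ell(s)}=P_\ell=H(A)^\vee$ for all $s\ne 0$. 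The limit as $s\to 0$ of $\ell(s)/s$ is the linear form $\ell_1$. By the semicontinuity of Jordan type under this family (Lemma \ref{semicontlem}(iii), applied to the matrices $m_{\ell(s)/s}$), we get $P_{\ell_1}\ge \lim$-value; but more directly, for $s\ne 0$ the rescaled operator $m_{\ell(s)/s}$ is conjugate to $m_{\ell_1 + s\ell_2 + s^2\ell_3+\cdots}$, a family of multiplication operators by \emph{linear-leading} elements of $\mathfrak m_A$ whose $s=0$ member is $m_{\ell_1}$, and whose generic member for $s\ne 0$ has Jordan type $H(A)^\vee$ (being conjugate to $m_\ell$). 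Hence by Lemma \ref{semicontlem} the generic linear Jordan type $P_{lin,A}$ satisfies $P_{lin,A}\ge H(A)^\vee$; combined with the upper bound $P_{lin,A}\le {\sf P}(H)=H(A)^\vee$ from Theorem \ref{sfPeqthm}(i), we get $P_{lin,A}=H(A)^\vee$, so a generic $\ell'\in A_1$ has Jordan type $H(A)^\vee$. Finally, by Proposition \ref{prop:LefHilb} (the equivalence $(ii)\Leftrightarrow(i)$), $P_{\ell'}=H(A)^\vee$ implies that $\times(\ell')^b\colon A_i\to A_{i+b}$ has maximal rank for all $i,b$, i.e.\ $\ell'$ is a (general) strong Lefschetz element; since $H(A)$ is unimodal and — by the equality $P_{\ell'}=H(A)^\vee$ forcing $A$ to have symmetric Hilbert function when it is also to be nSL — one checks that gSL coincides with nSL here, so $A$ is strong Lefschetz.

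I expect the main obstacle to be the passage from the non-homogeneous element $\ell$ with SLJT to a \emph{linear} element with the same Jordan type: one must argue carefully that the rescaling family $\ell(s)$ degenerates, after dividing by $s$, to the linear part $\ell_1$ while keeping the Jordan type constant for $s\ne 0$, and then invoke semicontinuity in the correct direction. A subtlety is that $\ell_1$ itself need not be the generic linear form, so one uses semicontinuity to pass from $P_{\ell_1}$ (or rather the limit of the family) to $P_{lin,A}$, and then the upper bound from Theorem \ref{sfPeqthm} pins it down exactly — it is the two-sided squeeze $H(A)^\vee \le P_{lin,A}\le {\sf P}(H)=H(A)^\vee$, valid precisely because unimodality gives ${\sf P}(H)=P(H)=H(A)^\vee$, that does the work. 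The final identification of the rank condition with the strong Lefschetz property is then just Proposition \ref{prop:LefHilb} together with the remark (just after its proof) that for symmetric unimodal $H(A)$ the notions nSL and gSL agree.
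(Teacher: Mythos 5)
The easy direction (strong Lefschetz $\Rightarrow$ has SLJT element) is fine. But the converse is where the content lies, and there your argument has a genuine gap: the semicontinuity inequality runs the wrong way.

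You correctly observe that the rescaling family $\ell(s)/s=\ell_1+s\ell_2+\cdots+s^{j-1}\ell_j$ has constant Jordan type $H(A)^\vee$ for $s\neq 0$ and specializes at $s=0$ to the linear part $\ell_1$. But Lemma \ref{semicontlem}(iii) says $P_t\geq P_{t_0}$ for $t$ near the special point $t_0$; applied here with $t_0=0$ it yields only $H(A)^\vee = P_{\ell(s)/s} \geq P_{\ell_1}$, i.e.\ $P_{\ell_1}\leq H(A)^\vee$. That inequality is vacuous --- it is already guaranteed by Theorem \ref{sfPeqthm}, equation \eqref{hilbineq2}. What you need is the reverse inequality $P_{\ell_1}\geq H(A)^\vee$, and specialization can in principle drop the Jordan type, so semicontinuity cannot supply it. The subsequent assertion ``Hence by Lemma \ref{semicontlem} the generic linear Jordan type $P_{\mathrm{lin},A}$ satisfies $P_{\mathrm{lin},A}\geq H(A)^\vee$'' does not follow either: the elements $\ell_1+s\ell_2+\cdots$ for $s\neq 0$ are not linear, so their Jordan type says nothing a priori about the generic linear Jordan type, and the squeeze you describe is between two upper bounds, not between an upper and a lower bound.

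What has to be shown, and what the paper actually proves, is that the \emph{initial form} $\ell_1$ of $\ell$ genuinely has $P_{\ell_1}=H(A)^\vee$ --- i.e.\ no jump occurs at $s=0$. The paper does this by a hands-on construction: starting from Jordan strings of $\ell$ with $P_\ell=H(A)^\vee$, it inductively modifies the strings (using that $A$ is standard graded and $H(A)$ is unimodal in an essential way) so that the initial forms of the modified strings are themselves linearly independent Jordan strings for $\ell_1$ of the same lengths. Unimodality enters through the ordering $m(1,H)\leq\cdots\leq m(r,H)\leq n(r,H)\leq\cdots\leq n(1,H)$ and is what lets the adjustments to each new string generator be made using elements from the previous strings of appropriate orders. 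None of this is replaced by the degeneration argument you propose. If you want to salvage a ``soft'' approach, you would need a statement that Jordan type is constant along this particular $\mathbb G_m$-rescaling family including the central fiber --- but that is essentially equivalent to the proposition itself and requires the same string-by-string analysis.
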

\begin{proof} 
Assume that $A$ has an element $\ell$ (possibly non-homogeneous) of strong Lefschetz Jordan type, so $P_\ell=H(A)^\vee=(p_1,p_2,\ldots, p_s)$ with $p_1\ge p_2\ge \cdots \ge p_s$. All that is needed for the forward direction is to show that there is a linear element $\ell'$ that is of strong Lefschetz Jordan type. Consider Jordan strings $S_1,\ldots ,S_s$ for $\ell$ as in Definition \ref{JTdef}, where $S_k=(z_{k},\ell z_{k},\ldots ,\ell^{p_k-1}z_{k})$. The orders of elements in a single string are distinct. Let $\ell'$ be the initial form of $\ell$, which, as we will see, must be linear. We will modify the strings, if needed, to a set of Jordan strings $S'_1,\ldots, S'_s$ for $\ell$ whose initial forms are Jordan strings for $\ell'$: this will show $P_{\ell'}=P_\ell=H(A)^\vee$, and prove that $A$ is strong Lefschetz.\par
Given that $H(A)$ is unimodal, we claim that we may choose the strings so that
\begin{enumerate}[(i).]
\item The first $t$ strings together contain $\min\{H(A)_i,\,t\}$ elements of order $i$ for each $i\in [0,j_A]$; and the initial forms of these elements are linearly independent;
\item The order $\nu(\ell^i z_{t})=\nu(z_{t})+i$ for each pair $(t,i)$ satisfying $1\le t\le s$ and $0\le i\le p_t-1$.
\end{enumerate}
We prove (i) and (ii) by complete induction on $t$. Considering $t=1$, the longest string $S_1=(z_{1},\ell z_{1},\ldots, \ell^{p_1-1}z_{1} )$ where $p_1-1=j_A$, the socle degree of $A$; and we may choose, after scaling by a non-zero constant, $z_{1}=1+\alpha$, $\alpha\in \m_A$. It follows (since $A$ has standard grading) that the initial term $\ell'$ of $\ell$ is linear, and that the elements in the string $S_1'=\bigl(1,\ell',\ldots ,(\ell')^{p_1-1}\bigr)$ satisfy $S_1'=\pi(S_1)$, where $\pi$ is the projection of the elements of $S_1$ onto their initial forms, and form a string of length $p_1$ for $\ell'$.\par
For the induction step we will need several facts. Denote by $m(t,H)$ the smallest integer $i$ such that $H(A)_i\ge t$, and $n(t,H)$ the largest integer $i$ such that $H(A)_i\ge t$.\vskip 0.2cm\noindent
\emph{Fact 1}. That $H(A)$ is unimodal is equivalent to the inequalities:
\begin{equation}\label{unimodal2eq}
m(1,H)\le m(2,H)\le \cdots \le m(s,H)\le n(s,H)\le n(s-1,H)\le \cdots \le n(1,H).
\end{equation}
Also, we have $p_u=1+n(u,H)-m(u,H)$.\vskip 0.2cm\noindent
\emph{Fact 2}.
Given $t\in [1,s]$ the condition (i) above implies\vskip 0.2cm
\begin{enumerate}[(i).]
\item[(iii).] Let $i<m(t,H)$ then the initial forms of all elements of $S_1\cup\cdots \cup S_t$ having order no greater than $i$ are a basis for $A/\mathfrak{m}_A^{\,i+1}\cong \oplus_{k=0}^i A_k$, and
\item[(iv).] Let $i>n(t,H)$. The union $\bigcup_{k=1}^t (\mathfrak{m}_A^{\,i}\cap S_k)$ is a basis for $\mathfrak{m}_A^i=\oplus_{k=i}^{j_A} A_k$.
\end{enumerate}
\emph{Induction step}: Fix $u\in [1,s-1]$ and assume that a set $S_1,\ldots, S_s$ of Jordan strings for $m_\ell$ has been chosen satisfying (i) and (ii) for all integers $t\le u$. We will keep the strings $S_1,S_2,\ldots,S_u$ fixed and will modify the chain $S_{u+1}$ to obtain a set of $s$ Jordan chains for $\ell$ so that the conditions (i), (ii) will be satisfied for all $t\in[1,u+1]$.\par
Consider the next string $S_{u+1}$, of length $p_{u+1}$; by assumption, its elements are linearly independent of the span of those from $S_1\cup \cdots \cup S_u$. Using that (i) and (ii), hence (iii), (iv) are satisfied for $t\le u$, we may adjust the generator $z_{u+1}$ for the string $S_{u+1}$ by linear combinations of elements from the previous strings to obtain a possibly new generator within the span of $S_1,\ldots, S_{u+1}$ having order $m(u+1,H)$, and whose initial form $z'_{u+1}=\pi(z_{u+1})$ is linearly independent of the degree $m(u+1,H)$ initial forms from elements of the strings $S_1,\ldots ,S_u$. Using (iv), we may adjust the generator $z_{u+1}$ further by suitable elements of order at least $m(u+1,H)$ from the previous $u$ strings so that $\ell^{p_{u+1}}\cdot z_{u+1}=0$. It follows that $\ell'^{p_{u+1}}\cdot z'_{u+1}=0$, and $z'_{u+1}$ is generator of an $\ell'$ string of length $p_{u+1}$, linearly independent from the $\ell'$ strings $S'_1, \dots ,S'_u$ determined by the initial elements from $S_1,\ldots,S_u$. It follows that (i) and (ii) are satisfied for $t\in [1, u+1]$. This completes the induction step. \par
We have shown (i) and (ii) for $S_1$ and the induction step. It follows that $P_{\ell'}=P_\ell=H(A)^\vee$, as claimed.\par
The converse, that $A$ has a strong Lefschetz element implies it has one of strong Lefschetz Jordan type is obvious from the definitions.
\end{proof}\vskip 0.2cm\noindent
The following result is well known (and has been reproved several times).
\begin{lemma}[Height two Artinian algebras are strong Lefschetz]
\label{heighttwolem} 
Let $A=\mathsf{k}[x,y]/I$ be Artinian standard graded of socle degree $j$, or $\mathcal{A}=\mathsf{k}\{x,y\}/I$ be local Artinian, and suppose $\cha \mathsf{k}=0$ or $\cha \mathsf{k}\ge j$. Let $\ell$ be a general element of $\mathfrak{m}_A$ in the first case, or of $\maxA$ in the second. Then $\ell$ has strong Lefschetz Jordan type and $A$ is strong Lefschetz, or $\mathcal{A}$ is of strong Lefschetz Jordan type, in the second.
\end{lemma}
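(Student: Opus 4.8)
\emph{The plan} is to collapse the three cases of the statement — a general linear $\ell\in A_1$ for graded $A$, a general inhomogeneous $\ell\in A^{+}$, and a general $\ell\in\maxA$ for local $\mathcal A$ — to one, and to settle that one by degenerating to a monomial ideal. Theorem~\ref{sfPeqthm}(i$'$),(ii) already supplies the upper bounds $P_{\ell}\le(H_{\mathfrak m_A}(A))^{\vee}=H(A)^{\vee}$ in the standard graded case and $P_{\ell}\le H(\mathcal A)^{\vee}$ in the local case, so in each case only a matching lower bound for a general $\ell$ is needed. For a general $\ell$ in $A^{+}$ (resp.\ $\maxA$) its initial form $\ell_{1}$ is a general linear form of $A$ (resp.\ of $\mathcal A^{\ast}=\Gr_{\maxA}\mathcal A$), and $m_{\ell_{1}}$ on the associated graded algebra is the associated graded of the filtered map $m_{\ell}$; so the standard rescaling degeneration of $m_\ell$ to $m_{\ell_1}$ together with semicontinuity of Jordan type (Lemma~\ref{semicontlem}, Corollary~\ref{comparetypecor}) gives $P_{\ell}\ge P_{\ell_{1}}$. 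Since $\mathcal A^{\ast}$ is again a standard graded Artinian quotient of ${\sf k}[x,y]$ with the same Hilbert function, everything reduces to the single claim: \emph{for standard graded $A={\sf k}[x,y]/I$ with $\cha{\sf k}=0$ or $\cha{\sf k}\ge j_{A}$, a general linear form $\ell\in A_{1}$ satisfies $P_{\ell}=H(A)^{\vee}$}. By Proposition~\ref{prop:LefHilb} this is equivalent to saying every $\times\ell^{b}\colon A_{i}\to A_{i+b}$ has maximal rank, i.e.\ $A$ is general strong Lefschetz (Definition~\ref{gSLdef}); once this holds $H(A)$ is unimodal, so in the graded inhomogeneous case Proposition~\ref{SLJTprop} recovers the conclusion and in the local case the displayed inequality chain closes up.

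\emph{Settling the reduced claim in characteristic $0$.} I would degenerate $I$ to its generic initial ideal $J=\operatorname{gin}_{\mathrm{revlex}}(I)\subset R={\sf k}[x,y]$, a Borel-fixed monomial ideal with $H(R/J)=H(A)$; $R/J$ is the special fibre of a flat family over $\mathbb A^{1}$ whose general fibres are isomorphic to $A$. The locus of strong Lefschetz algebras in a flat family (with fibrewise-constant $\dim A_1$) is Zariski open, and an open set containing a closed point contains all of its generizations, so it suffices to prove $R/J$ is strong Lefschetz. In characteristic $0$ a Borel-fixed ideal in two variables is a ``staircase'' ideal $J_{d}=\langle\,x^{a}y^{d-a}\mid a\ge c_{d}\,\rangle$ with $c_{0}\ge c_{1}\ge\cdots$, so the standard monomials of $R/J$ in degree $d$ run through $y^{d},xy^{d-1},\ldots$ up to the threshold $c_{d}$. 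Taking $\ell=y$, the map $\times y^{s}$ sends $x^{a}y^{d-a}$ to $x^{a}y^{d+s-a}$, which is standard precisely when $a<c_{d+s}$; as $(c_{d})$ is non-increasing this forces $\rk\,(\times y^{s}\colon (R/J)_{d}\to(R/J)_{d+s})=\min\{h_{d},h_{d+s}\}$, the maximal value. By Proposition~\ref{prop:LefHilb}, $y$ is a strong Lefschetz element of $R/J$, which proves the reduced claim and hence the lemma when $\cha{\sf k}=0$.

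\emph{The main obstacle} is positive characteristic, and the degeneration argument above does not apply verbatim there: in characteristic $p$ the generic initial ideal is only $p$-Borel fixed and may fail to be a staircase ideal — for instance $\operatorname{gin}$ can equal $(x^{p},y^{p})$ — and accordingly ${\sf k}[x,y]/(x^{2},y^{2})$ is not strong Lefschetz when $\cha{\sf k}=2$. The hypothesis $\cha{\sf k}=0$ or $\cha{\sf k}\ge j$ is exactly what excludes such degenerations: already for the monomial complete intersection $I=(x^{a},y^{b})$ of socle degree $j=a+b-2$, strong Lefschetz-ness of $\ell=x+y$ comes down to $\binom{a+b-2}{a-1}$ being a unit in ${\sf k}$, and the characteristic bound is what guarantees this. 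Propagating this from the complete-intersection and monomial cases to an arbitrary codimension-two algebra in positive characteristic — where one cannot pass freely to $\operatorname{gin}$ — is the technical core of the argument; it is classical, and is carried out in \cite{H-W,HW}.
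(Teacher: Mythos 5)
Your proposal is essentially correct and takes a genuinely different route from the paper's. The paper's own proof is a two-line citation: Brian\c{c}on's standard-basis parametrization of ideals in ${\mathbb C}\{x,y\}$ for characteristic zero, extended to $p\ge j$ by Basili--Iarrobino \cite[Theorem 2.16]{BaI}. You instead give a self-contained characteristic-zero argument via generic initial ideals: pass to $J=\operatorname{gin}(I)$, observe that Borel-fixed ideals in two variables are degreewise lex segments, and check by hand that $\times y^{s}$ has maximal rank on $R/J$, then conclude by openness of the SL locus in a constant-Hilbert-function flat family. That argument is correct (and close in spirit to the alternative proof in \cite[Proposition 4.4]{HMNW} which the paper's footnote mentions). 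The reduction from inhomogeneous $\ell\in\mathfrak m_A$ (or $\ell\in\maxA$) to a general linear form on $\Gr_{\mathfrak m}A$ via the rescaling degeneration plus Lemma~\ref{semicontlem}/Corollary~\ref{comparetypecor}, sandwiched against the upper bound of Theorem~\ref{sfPeqthm}, is exactly right; note that the invocation of Proposition~\ref{SLJTprop} at that point is superfluous (it goes from an SLJT element to a linear one, not the other way), since the semicontinuity argument already gives what you need.

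Two remarks. First, you do not actually use the claim that $(c_d)$ is non-increasing, which is just as well, since it is false at the start: $c_d=d+1$ strictly increases up to the initial degree of $J$. The rank computation $\operatorname{rk}(\times y^s\vert_{(R/J)_d})=\min\{c_d,c_{d+s}\}$ holds regardless of monotonicity, since $\times y^s$ sends the index set $\{a<c_d\}$ identically into the $a$-indices of degree $d+s$. Second, and more substantively, your own positive-characteristic example contradicts the lemma as stated: $A={\sf k}[x,y]/(x^2,y^2)$ in $\cha{\sf k}=2$ has socle degree $j=2$, so the hypothesis $\cha{\sf k}\ge j$ is \emph{satisfied}, yet $\ell^2=0$ for every $\ell\in\mathfrak m_A$ and $A$ is not strong Lefschetz. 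The correct bound, as your binomial computation $\binom{a+b-2}{a-1}$ for the monomial complete intersection already indicates (Lucas gives $\binom{p}{a-1}\equiv 0$ for $1\le a-1\le p-1$), is $\cha{\sf k}>j$; this appears to be an off-by-one slip in the paper's statement (and in Lemma~\ref{ClGlem}), and you should have flagged it rather than asserting that the stated hypothesis ``is exactly what excludes such degenerations.'' Finally, your gin argument does in fact extend to $\cha{\sf k}=p>j$ with only a small observation: Pardue's theorem gives that $\operatorname{gin}(I)$ is $p$-Borel-fixed, and in degrees $<p$ the $p$-Borel exchange property coincides with the ordinary Borel one, so for $p>j$ the gin is again a staircase in every relevant degree and the same rank computation applies. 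You could have closed the positive-characteristic gap yourself rather than deferring entirely to the literature.
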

\begin{proof} 
These statements follow readily from J. Brian\c{c}on's standard basis theorem for ideals in $\mathbb C[x,y]$ \cite{Bri}, that extends to the case $\cha \mathsf{k}=p\ge j$ (see \cite[Theorem 2.16]{BaI}).\footnote{Proofs in the case $A$ is graded occur also in \cite[Proposition 4.4]{HMNW} and \cite[Theorem 4.11]{Co2}, see \cite[Theorem 2.27]{MiNa}.}
\end{proof}\par
\begin{example} 
Let $\A=\F\{x,y\}/(xy-x^3, y^2)$ with weight function $\w(x,y)=(1,2)$. Then the Hilbert function and $\w$-Hilbert function are, respectively $H(\A)=(1,2,1,1,1)$ and $H_\w(\mathcal{A})=(1,1,2,1,1)$, with conjugate partitions $H(\A)^\vee=H_{\w}(\A)^\vee=(5,1)$.\par
A \emph{standard basis} $\sf B$ for the local algebra $\mathcal{A}$ is one such that the elements of $\mathsf{B}\cap \mathfrak{m}_{\mathcal{A}}^{\,i}$ are a basis for $\mathfrak{m}_{\mathcal{A}}^{\,i}$. Such a basis for $\mathcal{A}$ is $\{ 1, x; y, x^2; x^3, x^4\}$. The multiplication $\times x$ in this basis has Jordan strings 
\begin{equation}
1\to x\to x^2\to x^3\to x^4\to 0 \text { and }(y-x^2)\to 0.
\end{equation} 
Thus, $P_{x,\mathcal{A}}=(5,1).$\par
A basis for the graded ring $\A(\w)$ are the classes of $\{1, x, y, x^2, xy, x^2y\}$ and the only linear element is $x$: the strings of $m_x$ on this basis for $A$ are $(1\to x\to x^2\to xy=x^3\to x^2y=x^4\to 0)$ and $\bigl((y-x^2)\to 0\bigr)$ so here we have $P_{x,\mathcal{A}}=(5,1)=H_{\w}(\A)^\vee$. \par
\end{example}
However, the Example \ref{firstex} shows that a generic linear element $\ell$ of the graded algebra $A$, and a generic element $\ell'\in\mathfrak{m}_{\mathcal{A}}$ of the related local algebra $\mathcal{A}$, may satisfy $P_{\ell,A}=(7,1,1)<(7,2)=P_{\ell',\mathcal{A}}$.

\subsection{Jordan types consistent with a Hilbert function.}\label{hilbsec}
In \S \ref{conjHsec} we proved an inequality for Jordan types of elements of $\A$ and the Hilbert function (Theorem \ref{thm:JtIneq}). Here we show a similar inequality for graded algebras $A$, that is sharper when the Hilbert function is non-unimodal. It depends on a certain refinement $P_c(H)$ of the conjugate partition $H^\vee$, that we now define.

\subsubsection{The contiguous partition $P_c(H)$ of a Hilbert function.}
In general, given any finite sequence of non-negative integers $H=(h_0,\ldots,h_j)$ we consider its \emph{bar graph} as an array of dots (or boxes) arranged into columns with $h_i$ dots (or boxes) in the $i^{th}$ column. For example if $H=(2,3,1,4,0,2)$ then its bar graph is 
\[
\begin{array}{ccccccc}
&&& \bullet & & \\
& \bullet && \bullet & & \\
\bullet & \bullet &&\bullet & & \bullet \\
\bullet & \bullet & \bullet & \bullet & \circ & \bullet\\ 
\end{array}
\]
\begin{definition}[Contiguous partition, relative Lefschetz property]\label{def:relativeLef} 
i. The \emph{contiguous partition} $P_c(H)$ of the Hilbert function $H$ is the partition whose parts are the lengths of the maximal contiguous row segments of the bar graph of $H$. 
\par
ii. We say a linear form $\ell\in A_1$ in a graded Artinian algebra $A$ has the \emph{Lefschetz property relative to $H$} if its Jordan type is equal to the contiguous partition of $H$:
\[
P_\ell=P_c(H).
\]
\end{definition}
The following result is immediate.
\begin{lemma}[Hilbert function $H(M)$ of a finite-length module over $R$ and 
$P_\ell$] 
Let $M=M_0\oplus M_1\oplus \cdots\oplus M_j$ be an Artinian graded module over the polynomial ring $R=\mathsf{k}[x_1,\ldots ,x_r]$, satisfying $H(M)=(h_0,\ldots ,h_j)$, and let $\ell\in R_1$. Then for $1\le k\le j$
\begin{equation}\label{hilbineq}
\rk\, m_{\ell}^k\le \sum_{i=0}^{j-k}\min \{h_i,h_{i+1},\ldots h_{i+k}\}.
\end{equation}
Also, $P_\ell=P_c(H)$ if and only if there is equality for every $k$ in Equation \ref{hilbineq}.
\end{lemma}
\begin{proof} 
We give the proof for $M=A$. We observe that for any $\ell\in A_1$, the map $m_\ell^k\colon A_i\rightarrow A_{i+k}$ has rank at most $\min\left\{h_i,\ldots,h_{i+k}\right\}$. Summing over all $i$ we get the inequality of Equation~\ref{hilbineq}.\par
Recall that the conjugate Jordan type $P_\ell^\vee=(q_1,\ldots,q_j)$ is the first difference of the rank sequence of $m_\ell$, i.e.\ 
\[
q_k=\operatorname{rk}(m_\ell^k)-\operatorname{rk}(m_\ell^{k+1}).
\]
Hence if $\ell$ has the Lefschetz property relative to $H$, then it follows that Equation \eqref{hilbineq} is actually an equality. In particular, a Lefschetz element $\ell\in A_1$ relative to $H$ is one whose multiplication maps $m_\ell^k\colon A\rightarrow A$ have the maximal possible rank, given the Hilbert function, for each integer $k$. 
\end{proof}

The following result pertains to the conjugate of $P_c(H)$: when $H$ is unimodal then $P_c(H)^\vee=\{H\}$, that is, the Hilbert function viewed as a partition.
\begin{lemma}
\label{lem:contig}
Given any finite sequence of non-negative integers $H=(h_0,\ldots,h_j)$ if $P_c(H)^\vee =(p_1,\ldots,p_s)$, then 
the parts are given by 
\begin{equation}
\label{eq:conjcontig}
p_i=\sum_{k=0}^{j+1-i}\min\left\{h_k,\ldots,h_{k+i-1}\right\}-\sum_{k=0}^{j-i}\min\left\{h_k,\ldots,h_{k+i}\right\}.
\end{equation} 
\end{lemma}
\begin{proof}
The $i^{th}$ part of $P_c(H)^\vee$ is 
\[
p_i=\# \ \text{maximal contiguous row segments of length $\geq i$  in the bar graph of $H$.}
\]
Note that the sum 
\begin{equation}
\label{eq:sum}
\sum_{k=0}^{j+1-i}\min\left\{h_k,\ldots,h_{k+i-1}\right\}
\end{equation} 
counts the maximal contiguous row segments of length greater or equal to $i$ with a multiplicity equal to the number of length $i$-intervals it contains; in particular it counts a contiguous row segment of length $i$ exactly once. On the other hand, the sum
\begin{equation}
\label{eq:sum2}
\sum_{k=0}^{j-i}\min\left\{h_k,\ldots,h_{k+i}\right\}
\end{equation} 
counts maximal contiguous row segments of length $\geq i+1$ with multiplicity one less than they are counted in Equation \eqref{eq:sum}. Therefore the difference of the sums in Equations \eqref{eq:sum} and \eqref{eq:sum2} must count every maximal contiguous row segment of length $\geq i$ exactly once.
\end{proof}

\begin{theorem}
\label{thm:contig}
For a finite graded module $M$ over a graded Artinian algebra $A$ with Hilbert function $H(M)$, we have for any linear form $\ell\in A_1$
\[
P_{\ell,M}\leq P_c\bigl(H(M)\bigr).
\]
\end{theorem}
\begin{proof}
Given $\ell\in A_1$ we may choose a Jordan basis for $m_\ell\colon M\rightarrow M$ with strings $S_1,\ldots,S_s$ as in Equation \eqref{stringeq}, each of cardinality $|S_i|=p_i$ with $P_{\ell,M}=(p_1,\ldots,p_s)$ with $p_1\ge \cdots \ge p_s$. Since $\ell$ is linear, each string must belong to some maximal contiguous row segment of the Hilbert function $H(M)$, which implies the desired inequality. 
\end{proof}
\begin{example}\label{1.36ex}
Let $\A=\F\{y,z\}/(yz,z^3,y^7)$ with weights $\w(y,z)=(1,2)$, and $\w$-Hilbert function $H_{\w}(\A)=(1,1,2,1,2,1,1)$. Then $y\in \A_1$ is a generic linear form with Jordan type $P_y= P_c(H)=(7,1,1)$, the maximum possible by Theorem \ref{thm:contig}; in particular $y$ has the Lefschetz Property relative to $H_{\w}(\A)$ (Definition \ref{def:relativeLef}). On the other hand the conjugate partition of the $\w$-Hilbert function is $H_{\w}(\A)^\vee=(7,2)$. The Hilbert function for the related localization $\A$ at $\m_\A=\sum_{i\ge1} \A_i$ is $H(\A)=(1,2,2,1,1,1,1)$; the conjugate partition of this Hilbert function is also $H(\A)^\vee=(7,2)$. Thus $y$ does not have the strong Lefschetz Jordan type for $\A$, nor is it strong Lefschetz for $\A(\w)$ (or even WL). But the non-$\w$-homogeneous element $\ell=(y+z)\in \mathfrak{m}$ has Jordan type $P_\ell=(7,2)$, hence $\A$ has strong Lefschetz Jordan type (SLJT).
\end{example}
\noindent
\textbf{Note.} Recall that we have adopted in Definition \ref{def:WLSL}(ii) the convention of T. Harima and J.~Watanabe that a non-standard graded algebra $A$ is strong Lefschetz if and only if there is a \emph{linear} form $\ell\in A_1$ that has SLJT (see \cite{HW,H-W}). Thus, the rings $A$ of Examples~\ref{1.36ex}, \ref{firstex}, \ref{2.44ex} and \ref{dualgenex} are not strong Lefschetz, even though in each Example the corresponding local ring $\A$ has an element of strong Lefschetz Jordan type: as $y+z$ in Example \ref{1.36ex} is an element in $\A$ having SLJT --- so $\A$ has SLJT by our Definition \ref{SLJTdef}. 
\begin{remark}\label{sfPbargraphrem} 
Instead of using $(\dim_\mathsf{k}M,\, \dim_{\mathsf{k}}M/\ell M,\,\dim_\mathsf{k}M/\ell^2M,\ldots)$ as in Equation \eqref{deq} to define $P_{\ell,M}$ we may replace each $\ell^k$ by $(\ell_1\cdot \ell_2\cdots \ell_{k})$, a product of different -- generic -- linear forms, yielding a partition $Q(M)$. It can be shown similarly to the proof of Equation \eqref{eq:JTA} that
\begin{equation}
P_{\ell,M}\le Q(M)\le H(M)^\vee.
\end{equation}
We can ask similar questions for $Q(M)$ to those we ask about $P_{\ell,M}$. When is $P_{\ell,M}=Q(M)$? Also, the partition $Q(M)$ appears to be related to the concepts of ``$k$-Lefschetz'' \cite[\S 6.1]{H-W} and ``mixed Lefschetz'' \cite{Cat}. What is the relation of these concepts to Jordan type?
\end{remark}
We note that Jordan type and Hilbert function has been in particular studied for codimension two complete intersections in \cite{AIK}; also B. Costa and R. Gondim have used mixed Hessians to study other examples of Jordan type in higher codimension \cite{CGo}.

\subsection{Artinian Gorenstein algebras and Macaulay dual.}\label{AAMDsec}
The polynomial ring $R=\mathsf{k}[x_1,\ldots ,x_r]$,  acts on its dual $\mathfrak{D}=\mathsf{k}_{DP}[X_1,\ldots ,X_r]$ by contraction: $x_i^k\circ X_j^{[k']}= \delta_{i,j}X_j^{[k'-k]}$ for $k'\ge k$, extended multilinearly.\footnote{F.H.S. Macaulay used the notation $x_i^{-s}$ for the element we term $X_i^{[s]}$ in $\mathfrak{D}$.} We say that a graded Artinian algebra is \emph{standard graded} if $A$ is generated by $A_1$. We next define a homogeneous element of $\mathfrak{R}$, the Macaulay dual generator for a graded Artinian quotient $A=R/I$. \par 
Likewise, the regular local ring $\mathcal{R}=\mathsf{k}\{x_1,\ldots,x_r\}$ acts on
$\mathfrak{D}$, also by contraction and  we will define similarly a dual generator in $\mathfrak{R}$ for a local Artin algebra $A=\mathcal{R}/I$. \begin{definition}[Macaulay dual generator]\label{Macdualdef}
An Artinian Gorenstein (AG) algebra quotient  $A=R/I$ (respectively, $\mathcal{A}=\mathcal{R}/I$) satisfies $A=R/\Ann f$ (respectively, $\mathcal{A}=\mathcal{R}/\Ann {f}$), where $ f\in \mathfrak{D}=\mathsf{k}_{DP}[X_1,\ldots ,X_r]$ is called the \emph{dual generator} of $\mathcal{A}$. The module $\hat{A}=R\circ f$ in the graded case, or $\hat{\mathcal{A}}=\mathcal{R}\circ f$ in the local case is the Macaulay dual of $\mathcal{A}$, equivalent to the Macaulay \emph{inverse system} of the ideal $I$. The \emph{socle} of $A$ (respectively of $\mathcal{A}$) is $\Soc(A)=(0:\mathfrak{m}_A)\subset A$ (respectively $\Soc(\mathcal{A})=(0:{\mathfrak{m}_{\mathcal{A}}})\subset \mathcal{A}$), is the unique minimal non-zero ideal of $A$ or of $\mathcal{A}$, and $\dim_\mathsf{k}\Soc(\mathcal{A})=1$.\par
For a more general Artinian algebra $A=R/I$ (graded) or $\mathcal{A}=\mathcal{R}/I$ (local), a set of \emph{Macaulay dual generators} of $A$ are a minimal set of $A$ (or $\mathcal{A}$) module generators in $\mathfrak{D}$ of $I^{\perp}=\{h\in \mathfrak{D}\mid I\circ h=0\}$.
\end{definition}
\begin{example}[Artinian Gorenstein] 
(i). Let $R=\mathsf{k}[x,y]$, $A= R/I$, $I=\Ann f$ with $ f=XY\in \mathfrak{D}=\mathsf{k}_{DP}[X,Y]$. Then $I=(x^2,y^2)$ and $A=R/(x^2,y^2)$ of Hilbert function $H(A)=(1,2,1)$. Here $x^2\circ XY=0$ is the contraction analogue of $\partial^2(XY)/(\partial X)^2=0$ and the dualizing module $A^\vee=R\circ f=\langle 1,X,Y,XY\rangle$.\par
(ii). Let $\mathcal{R}=\mathsf{k}\{x,y\}$, the regular local ring, and take $f=X^{[4]}+X^{[2]}Y$. Then $\mathcal{A}=\mathcal{R}/I$, $I=\Ann f=(xy-x^3, y^2)$, and the Hilbert function $H(\mathcal{A})=(1,2,1,1,1)$. The dualizing module $A^\vee=\mathcal{R}\circ f=\langle 1,\,X,\,Y,\,X^{[2]},\,X^{[3]}+XY,\,f\rangle$.
\end{example}
Letting $\mathfrak{m}_{\mathcal{A}}$ be the maximal ideal of the Artinian Gorenstein local algebra $\mathcal{A}$, we have $\Soc (\mathcal{A})=\mathfrak{m}_{\mathcal{A}}^{\,j}$ where $\mathfrak{m}_{\mathcal{A}}^{\,j}\not=0$ and $\mathfrak{m}_{\mathcal{A}}^{\,j+1}=0$. Then we have the following result (\cite[\S 60-63]{Mac}, \cite[Lemma 1.1]{I1}, or, in the graded case, \cite[Lemma 1.1.1]{MS}):
\begin{lemma}\label{dualgenlem}[Dual generator for AG algebra]
\begin{enumerate}[(i).]
\item Assume that $\mathcal{A}=\mathcal{R}/I$ is Artinian Gorenstein of socle degree $j$. Then there is a degree-$j$ element ${f}\in \mathfrak{D}$ such that $I=I_f=\Ann f$. Furthermore $f$ is uniquely determined up to action of a differential unit $u\in \mathcal{R}$: that is 
\begin{equation}
\Ann f=\Ann (u\circ f). \text { Also, } \Ann f=\Ann g\Leftrightarrow g=u\circ f \text { for some unit } u\in \mathcal{R}.
\end{equation} 
The $\mathcal{R}$-module $(\Ann f)^\perp=\{h\in \mathfrak{D}\mid (\Ann f)\circ h=0\}$ satisfies $(\Ann f)^\perp =R\circ f$.
When ${f}$ is homogeneous, it is uniquely determined by $\Ann f$ up to nonzero constant multiple.\vskip 0.2cm
\item Denote by $\phi:\Soc(\mathcal{A})\to \mathsf{k}$ a fixed non-trivial isomorphism, and define the pairing $\langle \cdot,\cdot\rangle_\phi$ on $\mathcal{A}\times \mathcal{A}$ by $\langle (a,b)\rangle_\phi=\phi(ab)$. Then the pairing $\langle (\cdot,\cdot)\rangle_\phi$ is an exact pairing on $ \mathcal{A}$, for which $(\m_\A^{\,i})^\perp=(0:\mathfrak{m}_{\mathcal{A}}^{\,i})$. We also have $0:\m_{\A}^{\,i}=\Ann(\m_{\A}^{\,i}\circ f)$ and $\Ann (\ell^i\circ f)=I_f:\ell^i$.
\vskip 0.2cm
\item When $A=\oplus_0^j A_i$ is graded (not necessarily standard-graded) of socle degree $j$ (largest integer for which $A_j\not=0$), analogously to (ii), we choose an isomorphism $\phi:A_j\to\mathsf{k}$, and then define the inner product $\langle \cdot,\cdot\rangle_\phi$. 

We note that in the above pairing $(A_{\ge i})^\perp=(0:A_{\ge i})=A_{\ge j+1-i}$. Passing to quotients $A_i=A_{\ge i}/A_{\ge i+1}$ we conclude that each $A_i\cdot A_{j-i}\to A_j$ is an exact pairing, and the Hilbert function $H(A)$ is symmetric about $j/2$. When $A$ is standard graded, we also have, taking $\mathfrak{m}_A=\oplus_{k=1}^j A_k$, that $(\m_A^{\,i})=A_{\ge i}$. 
\end{enumerate}
\end{lemma}
When the AG algebra $\mathcal{A}=\mathcal{R}/\mathcal{I}$ is a local ring then in general the dual generator $f$ is not homogeneous, and the Hilbert function $H(\A)$ is not in general symmetric: however the associated graded algebra $\A^\ast$ has a filtration whose successive quotients are reflexive $\A^\ast$ modules, and $H(\A)$ has a corresponding ``symmetric decomposition'' \cite{I1,IM}. When the AG algebra $A$ is (perhaps non-standard) graded, then the dual generator $f\in \mathfrak{D}$ may be taken homogeneous in a suitable grading of $\mathfrak{D}$, and (iii) implies that the Hilbert function $H(A)$ is symmetric about $j/2$, where $j$ is the socle degree of $A$.
\begin{example} 
We let $R=\mathsf{k}[x,y]$, with weights $\mathsf{w}(x,y)=(3,1)$, and consider the complete intersection algebra $A=R/(x^2-y^6, xy)$, then $I^\perp=R\circ f$ where $ f=X^2+Y^6$, which is homogeneous in the analogous grading of $\mathfrak{D}$. We have $H(A)=(1,1,1,2,1,1,1)$.
\end{example}
\begin{lemma}\label{JT2lem} 
Let $\mathcal{A}=\mathcal{R}/I$ be local Artinian Gorenstein of socle degree $j$ with Macaulay dual generator $F\in \mathfrak{D}$ and let $\ell\in {\mathfrak{m}}_\mathcal{A}$. The conjugate $(P_\ell)^\vee$ to the Jordan type $P_\ell$ satisfies
\begin{equation}\label{dualeq}
(P_\ell)^\vee =\Delta\bigl(\dim_\mathsf{k} \mathcal{A},\dim_\mathsf{k}\mathcal{A}(1),\dots,\dim_\mathsf{k}\mathcal{A}(i),\ldots ,\dim_\mathsf{k}\mathcal{A}(j)\bigr)
\end{equation}
where $\mathcal{A}(i)=\mathcal{R}/(I:\ell^i)=\mathcal{A}/(0:\ell^i)=\mathcal{R}/\Ann(\ell^i\circ F)$.
\end{lemma}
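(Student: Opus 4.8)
The plan is to reduce the statement to Lemma \ref{JTlem}(i), which already gives $(P_\ell)^\vee = \Delta(d_\ell)$ where $d_i = \dim_{\sf k}\mathcal A/\ell^i\mathcal A$. So the only thing to prove is the chain of identities
\[
\dim_{\sf k}\mathcal A/\ell^i\mathcal A \;=\; \dim_{\sf k}\mathcal A(i)\;=\;\dim_{\sf k}\mathcal R/(I:\ell^i)\;=\;\dim_{\sf k}\mathcal R/\Ann(\ell^i\circ F),
\]
after which $(P_\ell)^\vee = \Delta(d_\ell)$ becomes exactly \eqref{dualeq}. First I would recall the standard Macaulay duality setup: for an Artinian Gorenstein quotient $\mathcal A = \mathcal R/I$ with dual generator $F\in\mathfrak D$, one has $I = \Ann F$ (the forms in $\mathcal R$ annihilating $F$ under the contraction action $\circ$), and more generally for any homogeneous or non-homogeneous $g\in\mathcal R$ the colon ideal satisfies $I : g = \Ann(g\circ F)$. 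This last identity is the crux and is a formal consequence of the adjointness of the contraction pairing: $h\in (I:g)$ iff $hg\in I = \Ann F$ iff $(hg)\circ F = 0$ iff $h\circ(g\circ F)=0$ iff $h\in\Ann(g\circ F)$. Applying this with $g = \ell^i$ gives $I:\ell^i = \Ann(\ell^i\circ F)$, which identifies the second, third and fourth algebras in the displayed chain.

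Next I would check the first equality, $\mathcal A/(0:_{\mathcal A}\ell^i) \cong \mathcal A/\ell^i\mathcal A$ has the same dimension, equivalently $\dim_{\sf k}\mathcal R/(I:\ell^i) = \dim_{\sf k}\mathcal A/\ell^i\mathcal A$. The cleanest route is via the exact sequence of $\mathcal A$-modules
\[
0 \longrightarrow (0:_{\mathcal A}\ell^i) \longrightarrow \mathcal A \xrightarrow{\ \times\ell^i\ } \mathcal A \longrightarrow \mathcal A/\ell^i\mathcal A \longrightarrow 0,
\]
which gives $\dim_{\sf k}(0:_{\mathcal A}\ell^i) = \dim_{\sf k}\mathcal A/\ell^i\mathcal A$. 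Then one invokes Gorenstein duality in the form: $(0:_{\mathcal A}\ell^i)$ is, under the perfect pairing $\mathcal A\times\mathcal A\to\sf k$ induced by the socle, the annihilator of $\ell^i\mathcal A$, so $\dim_{\sf k}(0:_{\mathcal A}\ell^i) = \dim_{\sf k}\mathcal A - \dim_{\sf k}\ell^i\mathcal A = \dim_{\sf k}\mathcal A/\ell^i\mathcal A$ — consistent — and moreover the quotient $\mathcal A/(0:_{\mathcal A}\ell^i)$ is dual to the submodule $\ell^i\mathcal A$, hence has dimension $\dim_{\sf k}\ell^i\mathcal A$. Wait — I need to be careful about which of these equals $\dim_{\sf k}\mathcal A(i)$; in fact $\mathcal A(i) = \mathcal A/(0:_{\mathcal A}\ell^i)$ and its dimension should be $d_i = \dim_{\sf k}\mathcal A/\ell^i\mathcal A$, which forces $\dim_{\sf k}\mathcal A/(0:\ell^i) = \dim_{\sf k}\mathcal A/\ell^i\mathcal A$, i.e. $\dim_{\sf k}(0:\ell^i) = \dim_{\sf k}\ell^i\mathcal A$; this is precisely the Gorenstein duality between a submodule and the quotient by its annihilator, applied to $N = \ell^i\mathcal A\subseteq\mathcal A$. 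So the argument is: (a) $\times\ell^i$ exact sequence gives $\dim(0:\ell^i)=\dim\mathcal A/\ell^i\mathcal A$; (b) Gorenstein self-duality gives $(0:\ell^i) = \operatorname{Ann}(\ell^i\mathcal A)$ and hence $\dim\mathcal A/(0:\ell^i) = \dim\ell^i\mathcal A$; combining, $\dim\mathcal A(i) = \dim\ell^i\mathcal A$, while $d_i = \dim\mathcal A/\ell^i\mathcal A = \dim\mathcal A - \dim\ell^i\mathcal A$. Hmm, this says $\dim\mathcal A(i) = \dim\mathcal A - d_i$, not $d_i$.

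Let me re-read the intended statement: equation \eqref{dualeq} claims $(P_\ell)^\vee = \Delta(\dim\mathcal A, \dim\mathcal A(1),\ldots,\dim\mathcal A(j))$ with $\mathcal A(i) = \mathcal R/(I:\ell^i)$. Since $(I:\ell^i)$ is an increasing chain of ideals, $\dim\mathcal A(i)$ is \emph{decreasing}, so this is really the rank-sequence version of Lemma \ref{JTlem}(ii): indeed $\dim\mathcal R/(I:\ell^i) = \dim\mathcal A/(0:_{\mathcal A}\ell^i) = \dim\ell^i\mathcal A = r_i$, the rank of $m_{\ell^i}$. Then Lemma \ref{JTlem}(ii) says $(P_\ell)^\vee = \Delta(r_\ell) = -\Delta$ of the decreasing sequence... and one checks $\Delta$ of a decreasing sequence $r_0\ge r_1\ge\cdots$ in the convention $\delta_{r_\ell,i} = r_i - r_{i+1}$ gives a partition. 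Wait, but the display writes $\Delta$ applied to $(\dim\mathcal A, \dim\mathcal A(1),\ldots)$ which \emph{starts} with $\dim\mathcal A = r_0$ since $(I:\ell^0)=(I:1)=I$. Good. So the correct and clean plan is: identify $\dim_{\sf k}\mathcal A(i) = r_i := \dim_{\sf k}\ell^i\mathcal A$ via the two steps (the colon/annihilator identity $I:\ell^i = \Ann(\ell^i\circ F)$ from Macaulay duality, giving the last equality in the lemma, plus the isomorphism $\mathcal R/(I:\ell^i)\cong \ell^i\mathcal A$ as $\mathcal A$-modules, which is just the first isomorphism theorem applied to $\times\ell^i:\mathcal A\to\mathcal A$ with kernel $(0:_{\mathcal A}\ell^i) = (I:_{\mathcal R}\ell^i)/I$), and then quote Lemma \ref{JTlem}(ii), noting $r_{j} = \dim\ell^j\mathcal A = \dim\operatorname{Soc}\mathcal A = 1$ for Gorenstein of socle degree $j$, consistent with the formula. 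The main obstacle, and the only genuine content, is the Macaulay-duality identity $(I:\ell^i) = \Ann(\ell^i\circ F)$: proving it requires setting up the contraction action of $\mathcal R$ on $\mathfrak D$ correctly (including for non-homogeneous $\ell$, where one must be a little careful about the dual module $\mathcal A^\vee\subset\mathfrak D$ and its relation to $F$, as the footnote after the associated-graded discussion warns), and verifying the adjointness $(hg)\circ F = h\circ(g\circ F)$. Everything else is bookkeeping with dimensions and a direct appeal to Lemma \ref{JTlem}.
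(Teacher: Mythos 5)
Your proposal is correct and fills in exactly the details behind the paper's one-line proof, which just cites Lemma \ref{JTlem} as ``standard.'' The two key steps you identify --- the Macaulay-duality identity $(I:\ell^i)=\Ann(\ell^i\circ F)$ via adjointness $(hg)\circ F = h\circ(g\circ F)$, and the first-isomorphism-theorem identification $\mathcal R/(I:\ell^i)\cong \ell^i\mathcal A$ so that $\dim_{\sf k}\mathcal A(i)=r_i$ and Lemma \ref{JTlem}(ii) applies --- are precisely what the paper has in mind. (One small correction to your closing aside: for a non-generic $\ell\in\mathfrak m_{\mathcal A}$, e.g.\ $\ell\in\mathfrak m_{\mathcal A}^2$, one can have $\ell^j=0$ so $r_j=0$ rather than $1$; this does not affect the argument, only the trailing entry of the rank sequence.)
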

\begin{proof} 
This result is standard and follows from Lemma \ref{JTlem}. See also \cite[Lemma 3.60]{H-W}.
\end{proof}\par
For a generalization of Macaulay dual over a field $\sf k$ (as here) to Macaulay dual over any base, see S. Kleiman and J. O. Kleppe \cite{KlKl}. Several authors have studied the Artinian Gorenstein algebras arising from polynomials attached to combinatorial objects such as a matroid
\cite{MN,NSW}.
\subsection{Jordan degree type and Hilbert function.}\label{Jdegsec}
We next introduce a finer invariant than Jordan type of an $A$-module $M$, the Jordan degree type. We will define the Jordan degree type for graded modules $M$ over a graded ring $A$; although there is an analogue for local Artinian algebras $\A$, the results are less compelling, so we omit the local version.\vskip 0.2cm\par
\begin{definition}\label{degreeJT-def}Jordan degree type, contiguous degree type of $H$, order on JDT.
\vskip -0.2cm
\begin{enumerate}[(i).]
\item {\emph {Jordan degree-type of $M$.}} We give several equivalent notations for Jordan degree type.
Let $A$ be a graded Artinian algebra, let $M$ be a finite graded $A$-module, and let $\ell\in \mathfrak{m}$ be any homogeneous element.\par
(a). Suppose $P_{\ell,M}=(p_1,\ldots ,p_s)$, and write $M$ as a direct sum $M=\langle S_1\rangle\oplus \cdots \oplus \langle S_s\rangle$ of cyclic $\mathsf{k}[\ell]$-modules generated by $\ell$-strings of the form $S_k=\{z_k,\, \ell z_k,\ldots, \ell^{p_k-1}z_k\}$ satisfying $\ell^{p_k}z_k=0$, as in Definition \ref{JTdef}, and let $\nu_k$ be the order of $z_k$. For any ${k,k'\in\{1,\ldots,s\}}$ if ${k<k'}$ and ${p_k=p_{k'}}$, we assume ${\nu_k\le\nu_{k'}}$. By Lemma \ref{JThomoglem}(iv) the sequence of pairs of integers 
\begin{equation}\label{JDT1eq}
\mathcal{S}_{\ell,M}=\bigl((p_1,\nu_1),\ldots, (p_s,\nu_s)\bigr) 
\end{equation}
is an invariant of $(M,\ell)$, that we term the \emph{Jordan degree type} of $M$ with respect to $\ell$. \par\noindent
{\it{Notation}}. With $\ell$ understood, we will denote the pair $(n,\nu)$ by 
\begin{equation}\label{string2eq}
\mathsf{n}_\nu =\text{ a string -- a cyclic $\mathsf{k}[\ell]$-module -- of length $n$ beginning in degree $\nu$. }
\end{equation}
Thus, $\mathcal{S}=(\mathsf{5}_0,\mathsf{3}_1,\mathsf{3}_1,\mathsf{1}_2)$ denotes a Jordan degree type consistent with the Hilbert function $H=(1,3,4,3,1)$.\vskip 0.2cm\par
(b). Denote by $P_{\ell,i}$ (or $P_{\ell,i,M}$) the partition giving the lengths of those strings of $m_\ell$ acting on $M$ that begin in degree $i$: that is $P_{\ell,i}=(p_k\mid \nu_k=i)$. We denote by $\mathcal{P}=\mathcal{P}_{\deg,\ell}$ or by $\mathcal{P}_{\ell,M}=\mathcal{P}_{\deg,\ell,M}$ (to specify the module $M$) the sequence
\begin{equation}\label{degreeJTeq}
\mathcal{P}_{\deg,\ell}=(P_{\ell,0},\ldots,P_{\ell,j-1}),
\end{equation}
which we also term the \emph{Jordan degree type} (JDT) of $\ell$. For example the JDT $\mathcal{S}=(\mathsf{5}_0,\mathsf{3}_1,\mathsf{3}_1,\mathsf{1}_2)$ can be written $\mathcal{P}=\bigl(P_0=(5),\, P_1=(3,3),\, P_2=(1)\bigr)$. Given such a JDT sequence $\mathcal{S}_{\ell,M}$ or $\mathcal{P}_{\deg,\ell,M}$ as in Equation \eqref{JDT1eq} or \eqref{degreeJTeq} we denote by $H(\mathcal{S})$ or $H(\mathcal{P})$ the sequence $H=(h_0,\ldots, h_j)$ where $h_i$ counts the number of beads (basis elements) of the strings of $\mathcal{S}$ having degree $i$: it is the Hilbert function of any module having JDT $\mathcal{S}$ or $\mathcal{P}$.\par
Given an $A$-module $M$, we will denote by $Q_{\ell,i}$ (or $Q_{\ell,i,M}$) the partition giving the lengths of those strings of $m_\ell$ acting on $M$ that \emph{end} in degree $i$. We analogously define $\mathcal{Q}_{\ell,M}$ the \emph{end Jordan degree type} to how we defined $\mathcal{P}_{\ell,M}$.\vskip 0.2cm\par
(c). We give an alternate notation for Jordan degree type, closer to T. Harima and J.~Watanabe's central simple modules (Definition \ref{CSMdef} below, see \cite{HW1.5} and also B.~Costa and R.~Gondim's \cite[Definition 4.1]{CGo}). Recall from Lemma \ref{JThomoglem}(i) that, given an element $\ell\in \mathfrak{m}_A=\oplus_{i=1}^j A_i$, we may write $M=\oplus_{k=1}^s \langle S_u\rangle$ where each $S_k$ is an  \emph{$\ell$-string} of $M$, a cyclic $\mathsf{k}[t]/(t^{p_k})$-submodule with generator $z_k$, where $t$ acts as $m_\ell$; the Jordan type $P_{\ell,M}=(p_1,\ldots, p_s)$ where $p_k=|S_k|$. We define $\mathcal{E}_{\ell,M}=\{e_i^n(M,\ell), i\in [0,j], n\in [p_s,p_1]\}$ where
\begin{align}
e_i^n(\ell)&=e_i^n(M,\ell)=\# \{ {\text {length-$n$ strings $S_k$, $\langle S_k\rangle\cong\mathsf{k}[\ell]/({\ell^n})$, in $M$ beginning in degree $i$.\}}}\notag\\
&=\#\{(p_k,\nu_k)\in \mathcal{S}_{\ell,M}\mid  p_k=n, \nu_k=i\}.\label{multiplicityeqn}
\end{align}
By Lemma \ref{JThomoglem}(iv) the integers $e_i^n(\ell)$ are an invariant of the pair $(M,\ell)$ and do not depend on the particular decomposition.\par
\item \emph{Contiguous degree type of $H$.}
Given a Hilbert function $H$ of an Artinian algebra, we define the \emph{contiguous Jordan degree type} $\P_{c,\deg}(H)$ to be the degree-type obtained from the bar graph of $H$ (similar construction to the continguous Jordan type $P_c(H)$ of Definition~\ref{def:relativeLef}). More precisely, let $H=(h_0,h_1,\ldots ,h_j)$ be a sequence of non-negative integers (the Hilbert function). We denote by $P_{\deg,i}(H)$ the partition having $[h_i-h_{i-1}]^+$ parts, each of which is the length of a contiguous string of the bar graph of $H$ beginning in degree $i$. The \emph{degree-type} of the sequence $H$ is the sequence $\P_{\deg}(H)$ of partitions
\begin{equation}\label{degree-typeHeq}
\P_{c,\deg}(H)=\bigl(P_{\deg,0}(H),P_{\deg,1}(H)\ldots ,P_{\deg,j}(H)\bigr).
\end{equation}
It is the stratification of the contiguous partition $P_c(H)$ by the initial degree of the bars. We may also write $\mathcal{S}_{c,\deg}(H)$ as the JDT associated to $H$ in the sense of Equation \eqref{string2eq}.
\item  We will say for Jordan degree types $\mathcal{P}, \mathcal{P}'$ with the same Hilbert function $H(\mathcal{P})=H(\mathcal{P}')$ that 
\begin{equation}\label{concatenateeq}
\mathcal{P}\le_c \mathcal{P}'
\end{equation} if the strings of $\mathcal{P}$ can be concatenated -- that is, combined -- so as to form $\mathcal{P}' $. For example, $\mathcal{S}=(\mathsf{3}_0, \mathsf{2}_3)\le_c \mathcal{S}' =(\mathsf{5}_0)$ (notation of Equations \eqref{JDT1eq},\eqref{string2eq}).
\item We say that $\ell$ has the \emph{relative degree-Lefschetz property} with respect to $H$ if $\mathcal{P}_{\deg,\ell}=P_{c,\deg}(H)$.
\item A \emph{truncation} $\mathcal S_{\ell,A,\le k}$ of the Jordan degree type $\mathcal S_{\ell,A}$ of a graded algebra $A$ to degree less or equal $k$ is its projection to 
$A/\mathfrak m_A ^{k+1}$.  That is, each pair $(p_i,\nu_i)\in\mathcal S_{\ell,A}$ is replaced by
\begin{equation}\label{truncationeqn}
(\min\{p_i,k+1-\nu_i\},\nu_i) \in\mathcal S_{\ell,A,\le k}, \text { (or is omitted if $\nu_i\ge k+1$)}.
\end{equation}
\item
Given two standard-graded algebras $A,B\in \G_T, A=R/I, B=R/J$ of the same Jordan type $P_{\ell,A}=P_{\ell,B}$ with respect to a fixed element $\ell\in R_1$ we say that the Jordan degree type $\mathcal S_{\ell,A}\ge \mathcal S_{\ell,B}$ if for each $k$, the partition associated to $\mathcal S_{\ell,A,\le k}$ is greater or equal to that associated to $ \mathcal S_{\ell,B,\le k}$ in the dominance partial order (Definition \ref{dominancedef}). 
\end{enumerate}\vskip 0.2cm
 For an example of (v), the Jordan degree type $\mathcal S=\left((3,0),(3,1),(3,2),(3,3)\right)$ has truncation $\mathcal S_{\ell,\le 4}=\left((3,0),(3,1),(3,2),(2,3)\right)$. The JDT $\mathcal S^\prime=\left((3,0),(3,1),(3,1),(3,2)\right)$ with $S^\prime_{\le 4}=\mathcal S^\prime$ satisfies $\mathcal S^\prime > \mathcal S$.  See Example \ref{2.30ex}.

\end{definition}\par\vskip 0.2cm\noindent

\begin{lemma}[Specialization of Jordan degree type]\label{JDTspeclem}\par\noindent
Fix $\ell\in R_1$ and let $A(w),w\in W\backslash w_0$ be a family of graded Artinian algebras in $G_T$ of constant Jordan type $P_\ell$, and constant Jordan degree type
$\mathcal S_{\ell,A(w)}=S_\ell$ for $w\not=w_0$.  Assume that the limit algebra $A(w_0)$ has the same Jordan type $P_\ell$. Then $\mathcal S_\ell\ge \mathcal S_{\ell,A(w_0)}$.\par
\end{lemma}
\begin{proof} The projection from $G_T$ to $G_{T\le k}$ forgetting the portion of the algebra in degrees $k+1$ and higher, is an algebraic morphism. If there is a specialization
of Jordan types, it needs to extend to a specialization of the Jordan types projected to $A(w)_{\le k}$. Now the condition of (i), is obtained by reading the Jordan types of the projections from the Jordan degree types of $A(w)$, then applying Corollary \ref{semicontcor} below about the semicontinuity of Jordan types in the dominance order. \par\noindent
\end{proof}

The following result is a consequence of J. Brian\c{c}on's ``vertical strata'' analysis of ideals in ${\sf k}[x,y]$ \cite{Bri}. See also \cite{Got}, \cite[\S 2]{Yam}, \cite[p.\ 6-7]{AIKY}.
\begin{lemma}\label{cod2JDTlem}
When $A$ is a standard graded algebra of codimension two, and  has Jordan type $P_{\ell,A}=(p_1,p_2,\ldots, p_s)$ with respect to an element $\ell\in A_1$ then the Jordan degree type satisfies
\begin{equation}\label{JDTht2eq}
\mathcal S_{\ell,A}=\bigl((p_1,0), (p_2,1), \ldots, (p_i,i-1),\ldots ,(p_s,s-1)\bigr).
\end{equation}
\end{lemma}
\begin{proof} Let $A=R/I$, $I$ graded. Supposing $P_{\ell,A}=P$, then replacing $\ell$ by $x$ (change of basis), and using degree-lex order $1<x<y<\cdots <x^i<x^{i-1}y<\cdots <y^i<\cdots$ we may project $I$ to its initial monomial ideal $E_P$, which satisfies 
\begin{equation}\label{EPeqn}
E_P=(x^{p_1},yx^{p_2-1},\ldots,y^{i-1}x^{p_i-1},\ldots ,y^{s-1}x^{p_s-1},y^s).
\end{equation}
The Jordan degree type of $I$ and $E_P$ are the same, as the projection to initial form 
fixes degree: and the JDT of $E_P$ is $\mathcal S_{\ell,A}$ of Equation \eqref{JDTht2eq}.
\end{proof}

The beginning idea of the next example is that when $I\subset R$ is a monomial ideal defining $A=R/I$, and $z\in R_1$ there is a kind of ``order ideal'' of $z$-strings: that is, if $\mu $ is a monomial generator of a length-$t$ $z$-string of $A$ and if $\nu$ divides $\mu$ then $A$ has a $z$-string with generator $\nu$ whose length is at least $t$.  For the first algebra $A$ we began with a length-$3$ string $xyW$ where $W=\{ \langle 1,z,z^2\rangle\}$. For the second algebra $B$ we began with a length-$3$ string $x^3W$. Our subsequent discussion of structure/components involves more general AG algebras -- where $I$ is non-monomial -- and  shows that for $P=(3^4,1^4)$ the locus $G_{T,P}\subset G_T\times \mathbb P(R_1), T=(1,3,5,4,2,1)$ of pairs $(A,\ell), A\in G_T,\ell\in \mathbb P(R_1)$ (linear forms up to constant multiple) for which the Jordan type $P_{\ell,A}=P$, has several irreducible components.

\begin{example}[JDT not equivalent to JT in codimension three]\label{2.30ex}
Let $R={\sf k}[x,y,z]$ and set $T=(1,3,5,4,2,1)$. We will define $A=R/I_A$, $B=R/I_B$, each with Jordan type $P_{z,A}=P_{z,B}=(3^4,1^4)$ for the linear form $z$, but where 
\[
\mathcal S_A=\mathcal{S}_{z,A}=\bigl((3,0),(3,1)^2,(3,2);(1,2),(1,3),(1,4),(1,5)\bigr),
\] 
but 
\[
\mathcal S_B=\mathcal{S}_{z,B}=\bigl((3,0),(3,1),(3,2),(3,3);(1,1),(1,2)^2,(1,3)\bigr);
\]
after Equation \eqref{string2eq}, $\mathcal S_A=({\sf 3}_0,{\sf 3}_1,{\sf 3}_1,{\sf 3}_2,{\sf 1}_2,{\sf 1}_3,{\sf 1}_4,{\sf 1}_5)$ and $\mathcal S_B=({\sf 3}_0,{\sf 3}_1,{\sf 3}_2,{\sf 3}_3,{\sf 1}_1,{\sf 1}_2,{\sf 1}_2,{\sf 1}_3)$.
Let $A=\langle W,xW,yW,xyW, \{x^i, 2\le i\le 5\}\rangle $ where $W=\langle 1,z,z^2\rangle$; it is defined by the ideal $I_A=(y^2,x^2z,x^2y,z^3,x^6)$. Let $B=\langle W, xW,x^2W,x^3W,y,y^2,xy,y^3\rangle$, defined by the ideal $I_B=(yz,x^2y,xy^2, z^3,x^4,y^4)$. Note that both $A$ and $B$ are strong Lefschetz, as the Jordan type of ${x+y+z}$ is ${(6,4,3,2,1)=T^\vee}$.\vskip 0.2cm\par\noindent
{\bf Specialization of JDT, Structure of $G_{T,P}\subset G_T$}.
From Lemma \ref{JDTspeclem} one concludes that a family of AG algebras having JDT $\mathcal S_B$ cannot specialize to an algebra having JDT $\mathcal S_A$. We now show the converse. Let $A^\prime$ have JDT $\mathcal S_A$ and $B^\prime$ have JDT $ \mathcal S_B$ with respect to $z$. Evidently, there is an element $yz$ (for a suitable choice of $y$) in $I_2(B^\prime)$. We will now show that either\par
i. $I_2(A^\prime)$ is a perfect square. Then a family $I_2(A^\prime (w))=x_w^2$ cannot specialize to an $I_2$ which is composite,  {\it OR}\par
ii. $I_2(A^\prime)=\langle xy\rangle$ and $A^{'\vee}_5=(aY-bX)^{[5]}$, a pure power. But we will show in (iii) that $B^\vee_5$ is composite, and again this subfamily with JDT $\mathcal S_A$ cannot specialize to an algebra with JDT $\mathcal S_B$.\par
\begin{proof}[Proof that $A'$ satisfies (i) or (ii)]: The elements of $A'_2$ must include $zR_1$:  we may assume then that $I_2(A')=xy$ (up to change of basis $x,y$), or, case (i), that $I_2(A')= \langle u^2\rangle$ for some $ u\in \langle x,y\rangle$.\par\noindent
(ii)  Let us assume that $I_2(A')=xy$. Then $A'_2\supset x^2,y^2$, and the string beginning in degree two has generator $\alpha=x^2+cy^2$, and last element $z^2(x^2+cy^2)$. Then we claim $I_5(A')\supset (z,xy)\cap R_5$: this is so, since $(zx^k, zy^k)\in \langle I,\alpha z, \alpha z^2\rangle)$ for all $k\ge 2$ because of the JT $(3^4,1^4)$ and JDT $\mathcal S_A$. For example $zx^4\in I$ since there is no string $\{x^4, zx^4\}$ as ${\sf 2}_4$ does not occur. So we may assume that (after possible base change) $I_5=\langle (z,xy)_5,(ax+by)^5\rangle$. Then the dual $A{'^\vee}_5$ can be written as a pure $5$-th power, $(aY-bX)^{[5]}$. This shows (ii). \par\noindent
{\it Other ingredients:}\par
iii. $B{'^\vee}_5$ is composite. It needs to have a mixed $Z^{[2]}\beta$ term where $\beta\in {\sf k}_{DP}[X,Y]$. This cannot be part of a perfect power as $Z^{[2]}$ is the highest power of $Z$ that can occur.\par
This completes the proof that families of AG algebras with the Jordan degree type $\mathcal S_A$ and Hilbert function $T$ cannot specialize to an algebra in $G_T$ having Jordan degree type $\mathcal S_B$.  We have also shown that those algebras in $G_T$ of JDT $\mathcal S_A$ have two irreducible components, corresponding to whether $I_2$ is a perfect square or is composite.
\par iv. There are no further JDT associated to $P=(3^4,1^4)$ for $T$. Let $(A,z)\in G_{T,P}$.
First there must be ${\sf 3}_0$ and ${\sf 3}_1$ (else $I_2\supset xz,yz$, but $\dim_{\sf k}I_2=1$).
We must rule out $({\sf 3}_0,{\sf 3}_1,{\sf 3}_1,{\sf 3}_3)$ and $({\sf 3}_0,{\sf 3}_1,{\sf 3}_2,{\sf 3}_3)$ as the $3^4$ part of JDT. The former requires $I_3\supset \langle zx^2,zxy,zy^2\rangle$ but then $z^2A_3\subset I$ and there is no room for a string ${\sf 3}_3$. The latter requires $\langle yz\rangle=I_2$ (for suitable $y$); and $A_2=\langle y^2,x^2,xy,z^2, zx\rangle$. It follows that one of the two strings ${\sf 3}_2,{\sf 3}_2$ must begin with $\alpha=ay^2+byx$, but then $z\alpha\in (zy)\in I$, a contradiction.\par
We have shown that there are exactly two JDT $\mathcal S_A, \mathcal S_B$ associated to the pair $(P,T), P=(3^4,1^4), T=(1,3,5,4,2,1)$, and that a family of algebras having one of the JDT cannot specialize to an algebra having the other JDT.  We have also shown that the JDT locus $\mathcal S_A$ has two components. This implies that the locus of pairs $(A,\ell\in A_1)\subset G_{T,P}$ has three irreducible components.
\end{proof} 

\end{example}\par\noindent
{\bf Comments.}
When $H(A)$ is not unimodal, the relative degree-Lefschetz property is the closest one can get to strong Lefschetz (Proposition \ref{PJTgenlem}).\par
In the Jordan type $P_\ell$ the part $n$ occurs with multiplicity $\sum_i e^n_i$ where $e^n_i$ is from Equation~\eqref{multiplicityeqn} so 
\begin{align}
P_{\ell,i}&=(\ldots ,n^{e^n_i},\ldots) \text { and }\notag\\
P_\ell&=(\ldots, n^{\sum_ie^n_i},\ldots).\label{P-degreetypeeq}
\end{align}
Evidently, the degree-type $P_{c,\deg}(H)$ of the Hilbert function determines $H$, so it is equivalent to $H$ -- in contrast to $P(H)$ or even $P_c(H)$ which, when $H$ is non-unimodal, may not determine $H$ (see Example \ref{HPex} below).  Here are two more examples of concatenation: first, using the  $\mathcal{S}_{c,\deg}(H)$ notation, $({\color{blue}{\mathsf{2}_2}},{\color{red}{\mathsf{2}_4}})\le_c (\mathsf{4}_2)$ of Hilbert function $H=(0,0,{\color{blue}{1,1}},{\color{red}{1,1}})$; and, second, $(\mathsf{2}_2,{\color{blue}{\mathsf{2}_2}},{\color{red}{\mathsf{3}_4}})\le_c (\mathsf{5}_2,\mathsf{2}_2)$ of Hilbert function $H'=(0,0,{\color{blue}{2,2}},{\color{red}{1,1,1}})$.\par
Note that $P_{\deg,\ell}\le_c P_{c,\deg}(H)$ implies that $P_\ell\le P_c(H)$, but not vice versa.
Note also that the contiguous Jordan degree type $P_{c,\deg}(H)$ determines $H$, so is equivalent in information content to giving $H$. Recall that 
Theorem~\ref{thm:contig} bounded the Jordan type by the contiguous Hilbert function. We prove a refinement to JDT in the special case $A$ is standard-graded.
\begin{proposition}[Jordan degree type bound]\label{PJTgenlem} Let $M$ be a finite-length graded module over a standard graded Artinian algebra $A$. Let $\ell\in A_1$ be any linear form and let $\mathcal{P}_{\deg,\ell}=\mathcal{P}_{\deg,\ell,M}$ be its Jordan degree type as in Equation \eqref{degreeJTeq}. Then in the concatenation partial order,
\begin{equation}\label{PdegtypeHPeq}
\mathcal{P}_{\deg,\ell,M}\le_c \P_{c,\deg}\bigl(H(M)\bigr).
\end{equation}
Let $M$ be a fixed finite-length graded $A$-module, then there is a generic linear Jordan degree type 
$\mathcal{P}_{\deg}(M)=\mathcal{P}_{\deg, \ell}(M)$ for $\ell\in U$, a dense open set of 
$A_1$.
\end{proposition}
\begin{proof}[Proof] 
The first statement is evident. For the second, begin with the generic Jordan type $P(M)$, consider the set of highest length parts of $P(M)$, and their initial degrees: that these initial degrees are minimal is an open condition on $\ell$. Now fix this open set $U_1$ and go to the set of next highest-length parts for $\ell\in U_1$, forming an open $U_2$. In a finite number of steps one shows that $P_{deg}(M)$ is achieved for an open dense set $U$ of $\ell\in A_1$. 
\end{proof}

For a graded Artinian algebra $A$, knowing the Jordan degree type $\mathcal{P}_{\deg,\ell}$ is equivalent to knowing the Hilbert functions with respect to $\mathfrak{m}_A$ of the central simple modules (CSM) defined by T. Harima and J. Watanabe in \cite{HW}. We now explain this.
\begin{definition}[Central simple module]\label{CSMdef} 
Let $A$ be a graded Artinian algebra. Suppose that $\ell\in A$ satisfies $\ell^c\not=0$, $\ell^{c+1}=0$. The central simple modules defined by T. Harima and J.~Watanabe in \cite{HW} are the nonzero factors in the series,
\begin{equation}\label{csmeq}
A=(0:\ell^c)+(\ell)\supset (0:\ell^{c-1})+(\ell)\supset \cdots \supset (0:\ell)+(\ell).
\end{equation}
Let $s_\ell$ be the number of distinct parts of $P_\ell$. We denote by $V_{i,\ell}$ for $1\le i\le s_\ell$ the $i$-th central simple module: the vector space span of the
initial elements of length-$f_i$ strings of the multiplication $m_\ell$ on $M$:
\begin{equation}\label{Vieqn}
V_{i,\ell}\cong \langle (0:\ell^{f_i})+(\ell)\rangle\mod \langle (0:\ell^{f_i-1})+(\ell)\rangle. 
\end{equation}
Let $W_i=\oplus_{k=0}^{f_i-1}\ell^kV_{i\ell}$, a direct sum of those cyclic submodules $\langle S_u\rangle$ from Equation \eqref{stringeq} corresponding to length-$f_i$ strings.  Then, evidently $M=\oplus_{i}W_i.$
\end{definition}
\noindent
We have for the dimension of the degree-$u$ component of $V_{i,\ell}$,
\begin{equation}\label{Veeq}
\dim_\mathsf{k} (V_{i,\ell})_u=e^{f_i}_u(\ell),\text{ and }\dim_\mathsf{k} V_{i,\ell}=\sum_u e_u^{f_i}(\ell).
\end{equation}
This definition of CSM is perfectly general, and does not require $A$ to be graded nor $\ell\in \mathfrak{m}_A$ to be special. See \cite[\S 3.1]{H-W}, \cite[\S 5.1]{HW}; the latter treats non-standard grading. \par
The following Lemma connects the Jordan degree type and the central simple modules. The proof is straightforward. Besides \cite{HW} and \cite[\S 4.1]{H-W} see also \cite[Corollary 2.7]{BoI} for an approach using the symmetric decomposition with respect to the principal ideal $(\ell)$.
\begin{lemma}\label{HilbVillem} 
The set $\mathcal{H}_\ell=\bigl(H(V_{1,\ell}),\ldots ,H(V_{s_\ell,\ell})\bigr)$ of Hilbert functions of the central simple modules $V_{i,\ell}$, is equivalent to the Jordan degree type of $\ell$, rearranged according to the lengths $f_1>f_2>\cdots >f_{s_\ell}$.  In particular $H(V_{i,\ell})_u=( \ldots ,e^{f_i}_u(\ell), \ldots ) $.
\end{lemma}
\begin{example}[Degree types of Hilbert functions]
\label{HPex} We illustrate that the contiguous Hilbert function partition $P_c(H)$ can distinguish two Hilbert functions of the same partition $P(H)$; also, the Jordan degree-type $P_{c,\deg}(H)$ can distinguish two Hilbert functions of the same contiguous partition $P_c(H)$. For $H=(1,3,2,3,3,1)$ we have $P(H)=(6,4,3)$, $P_c(H)=(6,4,2,1) $ and $P_{c,\deg}(H)=\bigl((6_0), (4_1,1_1),(2_3)\bigr)$. For $H'=(1,3,1,3,3,2)$ we have $P(H')=(6,4,3)$, $P_c(H')=(6,3,2,1,1)$, and ${P}_{c,\deg}(H')=\bigl((6_0), (1_1,1_1),(3_3,2_3)\bigr)$. For $H'' =(1,3,3,2,1,3)$ (not pictured) we have $P(H'' )=(6,4,3)$, $P_c(H'' )=P_c(H' )=(6,3,2,1,1)$, but $P_{c,\deg}(H'')=\bigl((6_0),(3_1,2_1), (1_4,1_4)\bigr)$. This last illustrates that the Hilbert function is not determined by the contiguous partition $P_c(H)$, but is determined by the contiguous degree-type $P_{c,\deg}(H)$. We chose these examples, even thought they are not standard-graded, because their diagrams in Figure \ref{PartHilbfig} are particularly transparent. Note that by F.H.S. Macaulay's inequalities for Hilbert functions, none of the Hilbert functions $H,H',H'' $ above can occur for a standard graded algebra. We next give some similar comparisons whose Hilbert functions do occur for a standard graded algebra.
\vskip 0.2cm\noindent
\textbf{Standard graded Hilbert functions}: we compare contiguous partitions $P_c(H)$ and contiguous degree-types $P_{c,\deg }(H)$  (Equation \ref{degree-typeHeq}). Here $\{H_i, i\in [1,8]\}$ denotes Hilbert functions.\par
Same partition $P(H)=H^\vee$ but different $P_c(H)$: take $H_1=(1,3,6,4,5,6,2)$, $H_2=(1,3,4,5,6,6,2)$ then $P_c(H_1)=(7,6,5,4,2,1^3)$,  $P_c(H_2)=(7,6,5,4,3,2)$.\par
Same $P_c(H)$ but different Hilbert function (so different $P_{c,\deg}(H)$): then compare
$H_3=(1,3,5,7,6)$ with $H_4=(1,3,5,6,7)$. The Hilbert functions $H_5=(1,3,6,10,9,11,12,10)$ and $H_6=(1,3,6,10,9,10,11,12)$ have the same $P_c(H)$, but their degree types $\mathcal{S}(H)$ differ in having the subsequence $(1_3,2_5,1_6)$ for $H_5$ but $(1_3,2_6,1_7)$ for $H_6$. All of $H_1,H_2,\ldots ,H_6$ satisfy the Macaulay growth conditions. A simpler example compares  $H_7=(1,3,3,4,5)$ with $H_8=(1,3,4,5,3)$: since they are unimodal, and $P(H_7)=P(H_8)$ we have also $P_c(H_7)=P_c(H_8)$, but, of course, $H_7\not=H_8$ so $\mathcal{S}_{c,\deg}( H_7)=(\mathsf{5}_0,\mathsf{4}_1^2,\mathsf{2}_3,\mathsf{1}_4)$ is not $\mathcal{S}_{c,\deg}( H_8)=(\mathsf{5}_0,\mathsf{4}^2_1,\mathsf{2}_2,\mathsf{1}_3)$.\par
\end{example}
\begin{question} For which Hilbert functions $H$ can we find graded Artinian algebras $A$
with $H(A)=H$ and such that for a generic $\ell \in A_1$ we have, in increasing level of refinement,
\begin{equation}
P_{\ell,A}=P(H), \text { or } P_{c,\ell}=P_c(H), \text { or } P_{c,\deg,\ell}=P_{c,\deg}(H)?
\end{equation}
Note that a graded algebra $A=\mathsf{k}[x,y,z]/I$ of Hilbert function $H(A)=(1,3,3,4)$ cannot be even weak Lefschetz as the minimal growth from degree 2 to degree 3 implies that $I_2=a_1(x,y,z)$ for some $a_1\in A_1$, so multiplication by an $\ell\in A_1$ cannot be injective from $A_1$ to $A_2$.
\end{question}
There has been some study of a different question, namely, which Hilbert functions $H$ force $A$ to have one of the Lefschetz properties \cite{MiZa1,ZaZy}. See also \cite{MiNa}.
\begin{example} We first construct the idealization of $B=\mathsf{k}[x,y,z]/\mathfrak{m}^3$ of Hilbert function $H(A)=(1,3,6)$ with its dual giving an algebra $A$ of Hilbert function $H(A)=(1,3,6,0)+(0,6,3,1)=(1,9,9,1)$ (see also \S \ref{idealizationsec} below). We may take a Macaulay dual generator $F=\sum_{i=1}^6 U_i\mu_i$ where $\mu_i$ runs through the six monomials of degree 2 in $X,Y,Z$, in lexicographic order, and $U_1,\ldots, U_6$ are variables, as
\[
F=U_1X^{[2]}+U_2XY+U_3XZ+U_4Y^{[2]}+U_5YZ+U_6Z^{[2]}.
\]
Take $R=\mathsf{k}[x,y,z,u_1,\ldots, u_6]$ acting by contraction on $S=\mathsf{k}_{DP}[X,Y,Z,U_1,\ldots, U_6]$. Then $A_2^\vee=R_1\circ F$ satisfies
\begin{equation}
A_2^\vee=\langle X^{[2]},XY,XZ,Y^{[2]},YZ,Z^{[2]}, U_1X+U_2Y+U_3Z, U_2X+U_4Y+U_5Z, U_3X+U_5Y+U_6Z\rangle
\end{equation}
while $A_1^\vee=S_1=\mathsf{k}_{DP}[X,Y,Z,U_1,\ldots ,U_6]$. We may take (after scaling) as a generic linear form $\ell=x+y+z+\sum_{i=1}^6 u_i$. Then the rank of $m_\ell: A_1\to A_2$ is by duality the same as that of $m_\ell: A_2^\vee\to A_1^\vee$. But $m_\ell: A_2^\vee\to A_1^\vee$ takes a $6$ dimensional space $\langle X^{[2]},XY,XZ,Y^{[2]},YZ,Z^{[2]}\rangle $ to the $3$-dimensonal space $\langle X,Y,Z\rangle$ so has a three dimensional kernel. Thus, using symmetry of Proposition \ref{symprop} we have that $P_\ell=(4,2^5,1^6)$ and the Jordan degree type is $\mathcal{P}_\ell=(4_0,2^5_2,3_1,3_2)$.\par
Evidently, we may make similar examples using, say, a general-enough four-dimensional subspace $\mu_1,\ldots,\mu_4$ of $\mathsf{k}[x,y,z]_2$, and $F=\sum \mu_i U_i$, and finding there is a one-dimensional kernel, this gives an algebra $A$ of Hilbert function $(1,7,7,1)$ where $\ell=x+y+z+u_1+\cdots +u_4$ has Jordan type $(4,2^5,1,1)$ and Jordan degree type $\mathcal{S}_\ell=(\mathsf{4}_0,\mathsf{2}^5_1,\mathsf{1}_1,\mathsf{1}_2)$. Likewise we can determine $A$ of Hilbert function $H=(1,8,8,1)$ with $\ell$ a generic enough linear form having Jordan type $P_\ell=(4,2^5,1^4)$ and Jordan degree-type $\mathcal{S}_\ell=(\mathsf{4}_0,\mathsf{2}^5_1,\mathsf{1}^2_1,\mathsf{1}^2_2)$.
\end{example}
{\tiny
\begin{figure}
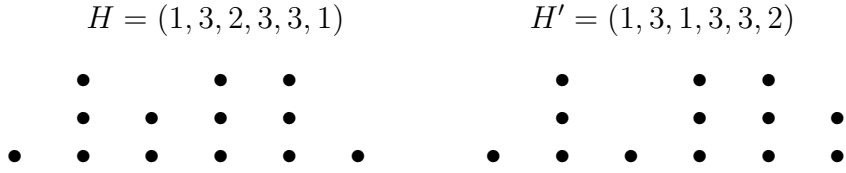

$\qquad\qquad\quad \qquad H=(1,3,2,3,3,1)\qquad\qquad\qquad H'=(1,3,1,3,3,2)$\vskip 0.3cm
$\qquad\qquad
\begin{array}{ccccccccccc}
&&\bullet&&&&\bullet&&\bullet&&\\
&&\bullet&&\bullet&&\bullet&&\bullet&\\
\bullet&&\bullet&&\bullet&&\bullet&&\bullet&&\bullet\\
\end{array}\qquad\quad
\begin{array}{ccccccccccc}
&&\bullet&&&&\bullet&&\bullet&&\\
&&\bullet&&&&\bullet&&\bullet&&\bullet\\
\bullet&&\bullet&&\bullet&&\bullet&&\bullet&&\bullet\\
\end{array}$
\vskip 0.4cm
\caption{$P_c(H)=(6,4,2,1); \,\text { and }\,\, P_c(H')=(6,3,2,1,1)$.}\label{PartHilbfig}\quad\qquad\qquad\quad
\end{figure}
}
\vskip 0.2cm
B. Costa and R. Gondim show the following Symmetry Proposition using the numerics of central simple modules \cite[Lemma 4.6]{CGo}: thus, this result along with the string diagrams they introduce \cite[Remark 4.9]{CGo} is essentially a felicitous and visual interpretation of the work of T. Harima and J.~Watanabe in \cite{HW,HW3}.\footnote{It was in a work group at the conference
Lefschetz Properties and Jordan Type at Levico, Italy, 25-29 June 2018 that Rodrigo Gondim had presented the symmetric string diagrams of \cite{CGo}; we realized a few days later in a discussion with Alessandra Bernardi and Daniele Taufer at University of Trento that the Jordan degree type we introduce here is a natural context for understanding this symmetry, leading to the Proposition.}
\begin{proposition}[Symmetry of Jordan degree type for graded AG algebras]\label{symprop}
Let $A$ be a standard graded Artinian Gorenstein algebra of socle degree $j$, and let $\ell\in A_1$. Then the Jordan degree type is symmetric:  
The integers
$e^n_\nu$ from Equation \eqref{multiplicityeqn} satisfy
\begin{equation}\label{symmetryeeq}
e^n_\nu=e^n_{j+1-n-\nu}.
\end{equation}
In other words, the set of strings of $\mathcal{S}=\mathcal{S}_{\ell,A}$ of Equation \eqref{JDT1eq} and notation
\eqref{string2eq} satisfy
$\mathsf{n}_\nu\in \mathcal{S}\Leftrightarrow \mathsf{n}_{j+1-n-\nu}\in \mathcal{S},$
with the same multiplicity $e^n_\nu=e^n_{j+1-n-\nu}$.
\end{proposition}
\begin{proof}The homomorphism: $m_{\ell^{f_i-1}}: V_{i,\ell}\to  \ell^{f_i-1}V_{i,\ell}$ is an isomorphism of $A$-modules, so we have $H(V_{i,\ell})_u=H(\ell^{f_i-1}V)_{u+f_i-1}$, but from the exact pairing $A\times A\to \mathsf{k}: (a,b)\to \phi(ab)$ of Lemma \ref{dualgenlem}, we have that $H(\ell^{f_i-1}V_{i,\ell})_{j-u}=H(V_{i,\ell})_u$. We conclude $H(V_{i,\ell})_u=H(V_{i,\ell})_{j+1-f_i-u}$; taking $n=f_i$ and using Equation \eqref{Veeq} we obtain the result.
\end{proof}\par
The Proposition is also a consequence of symmetric decomposition of the associated graded algebra $\Gr_\ell(A)$ with respect to $\ell$. The reflexive $\Gr_\ell(A)$ module $Q_\ell(j+1-f_i)$ in the symmetric decomposition of $\Gr_\ell(A)$ has first graded component $V_{i,\ell}$ and last component $\ell^{j+1-f_i}V_{i,\ell}$ (see \cite[Corollary 2.7]{BoI}).  This symmetry, which essentially states that there is a 1-1 map between the strings $\mathsf{n}_u(\ell)$ and the strings $\mathsf{n}_{j+1-n-u}(\ell)$ of the Jordan degree type of an AG algebra, greatly restricts the possible Jordan types: see \cite{AIK,CGo} for examples.
\begin{example}\cite[Example 4.6, Figure 14]{AIK}. We let $H=(1,2,3,2,1)$ and consider the Jordan degree types for elements $\ell\in A_1$ for Artinian complete intersections $A=R/(f,g)$ of Hilbert function $H$.  These are
$\begin{array}{cccc}
(\mathsf{5}_0,\mathsf{3}_1,\mathsf{1}_2),&(\mathsf{5}_0,\mathsf{2}_1,\mathsf{2}_2),&(\mathsf{4}_0,\mathsf{4}_1,\mathsf{1}_2),&(\mathsf{3}_0,\mathsf{3}_1,\mathsf{3}_2),
\end{array}$ in the notation of Equation \eqref{string2eq}.
The partition $P=(3,3,1,1,1)$, which ostensibly could correspond to a symmetric Jordan degree type
$(\mathsf{3}_0,\mathsf{3}_2,\mathsf{1}_1,\mathsf{1}_2,\mathsf{1}_3)$, does not occur for a CI (Gorenstein) quotient of $R=\mathsf{k}[x,y]$. Thus, the symmetry condition of Equation \eqref{symmetryeeq}, although quite restrictive, is not sufficient to determine the Jordan types for Artinian Gorenstein algebras of given Hilbert function.
\end{example}
\begin{question} How does the degree type $\mathcal S_{\ell,M}$ or $\mathcal P_{\deg,\ell,M}$ behave under
\begin{enumerate}[(i).]
\item deformation of $\ell\in A_1$?
\item deformation of $M$ within the family of $A$-modules of fixed Hilbert function $H$?\par
\item tensor product  (see Corollary \ref{JDtensorcor})?
\item projection to the quotient $R/\mathrm{in} I$? Are there JDT that cannot occur for an Artin graded algebra of Hilbert function $T$ defined by a monomial ideal?
\end{enumerate}
Also, does the inequality of Equation \eqref{PdegtypeHPeq} of Proposition \ref{PJTgenlem} extend to finite length modules over a local Artinian $\mathcal{A}$, taking $\ell\in \maxA$?  
\end{question}
\begin{question} Can we extend the notion of Jordan-degree type of $(A,\ell)$ for graded algebras to a ``Jordan-order type'' for local algebras $\mathcal{A}$, with properties analogous to those of Lemma~\ref{JThomoglem}, replacing degree by order, and omitting homogeneity? In particular, concatenating the orders of all elements in a good set of strings for $(A,\ell)$ should give the Hilbert function of $A$.
\end{question} That there are issues in defining Jordan order type for a local algebra is illustrated in the following example. Recall from Equation \ref{ordereq}ff that the order $\nu(a)$ of $a\in A$ is the largest power of the maximum ideal $\m$ of $A$ such that $a\in \m^{\nu}$.
\begin{example} Let $A=\mathsf{k}\{x,y\}/I$, $I=(x^2-xy^2,x^4,y^4)=(x^2-xy^2,y^4,x^3y^2)$, variables each of weight $1$, with $\mathsf{k}$-basis the classes of $\{1,x,y,xy,y^2,xy^2,y^3,xy^3\}$ and Hilbert function 
$H(A)=(1,2,2,2,1)$. Consider $\ell=x$ for which $P_x=(3,3,1,1)$ and the Jordan strings:  $S_1=(1,x,x^2=xy^2)$ of orders $(0,1,3)$, $S_2=(y,xy,x^2y=xy^3)$ of orders $(1,2,4)$, $S_3=(x-y^2)$ and $S_4=(xy-y^3)$ of orders $(1)$ and $(2)$; the choice of the strings has been made so that $x^{p_i}m_i=0$ where $m_i$ is the cyclic generator  of $S_i$ (hence the choice of $m_3=(x-y^2)$, so $xm_3=0$).
There are three basis elements (of the form $x^km_i, 0\le k\le p_i-1$) in the strings having order one; also, there is  no $\mathsf{k}[x]$ linear combination of the strings with the property that the orders of the basis elements match the Hilbert function (compare with Lemma \ref{JThomoglem}(i,iv) for the graded case). \par
One ``solution'' might be to adjust the sense of order in the presence of previous strings: for example $S_3=\langle x-y^2\rangle$ would be considered to have \emph{adjusted order} two, as
all of $R_1$, in particular $x$, is already in $\langle S_1, S_2\rangle$. 
\end{example}
\subsection{Deformations and generic Jordan type.}\label{genericsec}
We assume that $\mathsf{k}$ is an infinite field when discussing either generic Jordan type or deformation. A \emph{deformation} of a local Artinian algebra $\A$ over $\F$ is a flat family $\mathcal{A}_t$, $t\in T$ of Artinian algebras with special fiber $\mathcal{A}_{t_0}=\mathcal{A}$; then ${\mathcal{A}}_{t_0}$ is a \emph{specialization} of the family ${\mathcal{A}}_t$. We note that an algebraic family ${\mathcal{A}}_t,t\in T$ of Artinian algebras over a parameter space is flat if the fibers ${\mathcal{A}}_t$, $t\in T$ have constant length.\footnote{ A flat deformation means that relations among generators at the special point extend to relations among generators at the general point (see \S I.3 ``Meaning of flatness in terms of relations'' in \cite{Artin}). The flatness of a reduced family of Artinian algebras over an algebraically closed field that has constant fiber length is well known, and noted by J. Brian\c{c}on in \cite{Bri}. See \cite[Proposition 8 p. 44]{Mu} and \cite[Exercise 5.8c p.125]{Hart}.}  If also, for $t\not=t_0$ the algebras ${\mathcal{A}}_t$ have constant isomorphism type, we will say ${\mathcal{A}}_t$, $t\not=t_0$ is a jump deformation of ${\mathcal{A}}_{t_0}$. We use the dominance partial order on partitions of $n$ (Definition \ref{dominancedef}). The following result is well known in other contexts.
\begin{lemma}\label{dominancelem} 
Let $V$ be a $\sf k$-vector space of dimension $n$. Let $T$ be a parameter variety (as $T=\mathbb A_1$). Let $ \ell(t)\in \Mat_n(V)$, $t\in T$ be a family of nilpotent linear maps, and let $P$ be a partition of $n$. Then the condition on Jordan types, $P_{\ell(t)}\le P$ is a closed condition on $T$.
\end{lemma}
\begin{proof}
This is straightforward to show from Lemma \ref{JTlem} and the semicontinuity of the rank of $\ell(t)^i$ (see \cite[Thm. 6.2.5]{CM}, or \cite[Lemma 3.1]{IKh})
\end{proof}\par
This result is not unrelated to V.I. Arnol'd \cite[\S 4.4. Theorem]{Arn}, which studies the versal deformation of a matrix $M$ having a single eigenvalue (for us, the eigenvalue is $0$), and gives its dimension as $p_1+3p_2+5p_3+\cdots $. His article discusses singularities related to the different Jordan loci in the deformation space of $M$, a generalization of ``bifurcations'', a topic we do not develop here. The centralizer of $M$ is given in \cite[\S VIII.2]{Ga}, and is basic to the study of the nilpotent commutator of $M$ -- see the discussion after Example \ref{2posetex}, and references there.
\begin{corollary}[Semicontinuity of Jordan type]\label{semicontcor} {\phantom{make space}}\begin{enumerate}[(i).]
\item Let $M_t$ for $t\in T$ be a family of constant length modules over a parameter space $T$. Then for a neighborhood $U_0\subset T$ of $t=0$, we have that the generic Jordan types satisfy $t\in U_0\Rightarrow P_{M_t}\ge P_{M_0}$.
\item Let ${\mathcal{A}}_t$, $t\in T$ be a constant length family of local or graded Artinian algebras. Then for a neighborhood $U_0\subset T$ of $t=t_0$, we have $t\in U_0\Rightarrow P_{{\mathcal{A}}_{t}}\ge P_{{\mathcal{A}}_{t_0}}$.
\item Let $\ell_t\in\mathcal{M}_n(\mathsf{k})$ for $t\in T$ be a family of $n\times n$ nilpotent matrices, and let $P_t$ be their Jordan type. Then there is a neighborhood $U\subset T$ of $t_0$ such that $P_t\geq P_{t_0}$ for all $t\in U$.
\end{enumerate}
\end{corollary}
Applying this result to the deformation from the associated graded algebra $\mathcal{A}^\ast$ to the local Artinian algebra $\mathcal{A}$ we have
\begin{corollary}\label{comparetypecor} 
Suppose that $\mathcal{A}$ is a local Artinian algebra with maximum ideal ${\mathfrak{m}}$ and $\ell\in {\mathfrak{m}}$. Then $P_{\ell}(\mathcal{A})\ge P_{\ell}(\mathcal{A}^\ast)$. 
\end{corollary}
\begin{proof} 
Consider the natural flat deformation\footnote{See \cite[Theorem 15.17]{Ei}. This was shown by M. Gerstenhaber \cite{Ger} but \cite[Chapter 5]{Fu} gives a history showing prior use by D. Rim in 1956 and D. Mumford in 1959. It is easy to show in the Artinian case using the constant-length-fiber criterion for flatness.} from $\mathcal{A}^\ast$ to $\mathcal{A}$. For $t\not=0$, $\mathcal{A}_t$ has constant isomorphism type: this is a jump deformation, so the open neighborhood $U$ of Corollary \ref{semicontcor} includes elements where $P_{\ell,\mathcal{A}_t}=P_{\ell,\mathcal{A}}$. 
\end{proof}\par
Corollary \ref{comparetypecor} gives the following sufficient condition for checking SLJT:
\begin{corollary}
\label{cor:SLJTAG}
If an element $\ell\in\mathfrak{m}$ is SL for the associated graded Artinian algebra $\A^*$, then $\ell$ is SLJT for $\A$. 
\end{corollary}
\subsubsection{Jordan type and initial ideal.}
We thank the referee for calling to our attention the work of A. Wiebe, connecting Lefschetz properties of ideals with those of the initial ideals.  We assume that $<$ is a term order (monomial order) on the monomials of $R$: that is, if $\mu<\mu' $ then $\nu\mu<\nu\mu' $ for each monomial $\nu$. The \emph{initial ideal}  in $I$ is the monomial ideal generated by the initial monomials of its elements, and $\gin_< I\subset R$ is the initial monomial ideal of $\alpha\circ I$ where $\alpha$ is a general enough element of $\Gl_r(\mathsf{k})=\Aut (R_1)$ acting on $I$.  A. Wiebe has shown the first two parts of the next lemma. Part (iii) is newly stated here, but it follows from the discussion after \cite[Proposition 2.8]{Wi}, based on A. Conca's \cite[Lemma 1.2]{Con}: we include the proof. See also \cite[\S 6.1.2]{H-W} for a further discussion in the context of $k-$Lefschetz properties, which we do not treat here. The reverse-lex order is $<_\mathit{revlex}$ and satisfies  $x_1>x_2>\cdots >x_r>x_1^2>x_1x_2>x_2^2>x_1x_3>\cdots $ (see \cite[Definition 6.12]{H-W}).
\begin{proposition}\label{Wiebelem}\cite{Wi} Let $I\subset R$ be an $\mathfrak{m}$-primary graded ideal.\begin{enumerate}[(i).]
\item \cite[Prop. 2.8]{Wi} Then $A = R/I$ has SLP (respectively, WLP) if and only if $ R/\gin_<(I)$ has SLP (respectively WLP)\par
\item \cite[Prop. 2.9]{Wi}  Let $J$ be the initial ideal of $I$ with respect to a term order. If $S/J$ has the weak (resp. strong) Lefschetz property, then the same holds for $R/I$.
\item Let $K=\gin_< I$. Then the generic Jordan type $P_A$ of $A=R/K$ is the same as that of $R/I$. For $K=\gin_{<_\mathit{revlex}} I$ in the reverse lex order, this is the Jordan type of $R/K$ with respect to $x_r$.
\end{enumerate}
\end{proposition}
\begin{proof}[Proof of (iii)] (See A. Wiebe \cite[Prop. 2.8ff]{Wi}).\footnote{There are some misprints in the statements and proofs of these results in \cite{Wi}, that are correct in the arXiv version we have referenced.} He writes that by generalizing Conca's proof \cite[Lemma 1.2]{Con}, using
$\ini_\mathit{revlex}( g I + ( r^k )) =\ini_\mathit{revlex} ( g I ) + ( x_r^{\,k} )$ for all $k\ge 1$ one obtains that the Hilbert function of $S/(J,x_r^{\,k})$ is equal to the Hilbert function of $S/(I,\ell^k)$ for a general linear form $\ell \in S$ and
all $k\ge 1$. By the theorem of A. Galligo ($\cha \mathsf{k}=0)$ and D. Bayer and M. Stillman (arbitrary characteristic), the ideal $J=\gin_< I$ is Borel-fixed (\cite[Theorem 2.4]{Wi}, proved in \cite[Theorem 15.20]{Ei}).  It follows that for the revlex order, the generic Jordan type of $R/J$ is $P_{x_r}(R/J)$.
\end{proof}\par
A. Wiebe's discussion brings in the upper semicontinuity of $\dim \bigl(R/(I,\ell^k)\bigr)_i$: attention to this semicontinuity in a more general setting shows Lemma \ref{dominancelem}. 
A. Wiebe used his results to give criteria for componentwise linear ideals to have the WLP or SLP in terms of the Betti numbers of a resolution; these criteria were extended to all $\mathfrak{m} $-full ideals by T. Harima and J.~Watanabe \cite{HW5}, and further studied by J. Ahn, Young Hyun Cho, and J. P. Park \cite{ACP}.\par
However, the Jordan type may be different for $A=R/I$ and for $R/\ini(I)$ when the latter is not strong Lefschetz.
\begin{example}\label{JTinex} Take $R=\mathsf{k}[x,y,z]$, consider the graded lex monomial order 
$1<x<y<z<x^2<xy<xz<y^2<yz<z^2< \cdots$ and consider the complete intersection $A=R/I$, $I=(x^2, xy+z^2, xz+y^2)$ of Hilbert function $H(A)=(1,3,3,1)$. The generic Jordan type of $A$ is $(4,2)=H(A)^\vee$, so $A$ is strong Lefschetz.  But $J=\ini I=(x^2,xy,xz,y^3,z^4)$, $B=R/J$ has Jordan type $(4,1,1)$ since for any linear form  $\ell$, $m_\ell:A_1\to A_2$ has kernel $x$. Thus, $B$ is not weak Lefschetz.
\end{example}
\subsubsection{Examples of deformation, generic Jordan type.}
Recall that a local Artinian algebra $\mathcal{A}$ is \emph{curvilinear} if $H(\mathcal{A})=(1,1,\ldots ,1_j,0)$, in which case $\mathcal{A}$ is isomorphic to $\mathsf{k}\{x\}/(x^{j+1})$. In the following example we illustrate that a local algebra $\A$ determined by $I=(x,y)^2$ in $R=\mathsf{k}[x,y]$ can be deformed to a curvilinear algebra.  This is a special case of J. Brian\c{c}on's celebrated result that the fiber of the Hilbert scheme $\Hilb^n(\mathbb A^2)$ over $(0,0)$ is irreducible, and is the closure of the curvilinear locus -- which for embedding dimension two is the locus where the ideal defining $I$ has an element of order one \cite{Bri}.\footnote{J. Brian\c{c}on proved his result over ${\mathsf{k}}=\mathbb C$; his proof extends to algebraically closed fields satisfying $\cha \mathsf{k}>n$: this was improved to $\cha\mathsf{k}>n/2$ by R. Basili \cite{Ba} and to 
all characteristics by A. Premet \cite{Pr2}; see also V.~Baranovsky's \cite{Bar}.}\par
\begin{example}\label{firstdefex}
Let $\mathcal{A}(t)=\mathsf{k}\{x,y\}/I_t$ where for $t\in \mathsf{k}$, $t\not=0$, we let $I_t=(tx-y^2,y^3)$ and where $\mathcal{A}(0)=\lim_{t\to 0}\mathcal{A}(t)=\mathsf{k}[x,y]/(x^2,xy,y^2)$ (since for $t\not=0$ the ideal $I_t\supset \{tx-y^2,xy,x^2,y^3\}$). For $t\not=0$ the local algebra $\mathcal{A}(t)$ is a complete intersection with Hilbert function $(1,1,1)$. This family specializes to $\mathcal{A}(0)$ which is non-Gorenstein, with Hilbert function $(1,2)$. Here for $t\not=0$, taking $\ell=x+y$, we have that the generic linear Jordan type $ P_{\ell,\mathcal{A}(t)}=(3)>(2,1)=P_{\ell,\mathcal{A}(0)}$.
\end{example}
In the next examples we use the Macaulay dual generator notation from Definition \ref{Macdualdef}. The examples use standard grading.
\begin{example}\label{seconddefex}
Let $B_t=\mathsf{k}[x,y]/\Ann F_t$, where ${F_t=tX^{[5]}+X^{[2]}Y}$.
Then for $t\in \mathsf{k}$, $t\ne0$, the algebra $B_t$ is a curvilinear complete intersection, as in the previous example, with Hilbert function $(1,1,1,1,1,1)$. The family specializes to $B_0=\mathsf{k}[x,y]/(y^2,x^3)$, also a complete intersection (CI), with Hilbert function $(1,2,2,1)$. Here $\ell=x+y$ determines the generic (also generic linear) Jordan type $ P_{\ell.B_t}=(6)>(4,2)=P_{\ell,B_0}$.
\end{example}
\begin{example}\label{thirddefex}
Let $B_t=\mathsf{k}\{x,y,z\}/\Ann F_t$, where $F_t=t^2X^{[3]}Y^{[2]}+tX^{[2]}YZ+XZ^{[2]}$. Then for $t\not=0$ the algebra $B_t=\mathsf{k}\{x,y,z\}/(tz-xy,y^3,x^4)$ is an Artinian complete intersection with Hilbert function $(1,2,3,3,2,1)$. The family specializes to the complete intersection $B_0=\mathsf{k}[x,y,z]/(y^2,z^2,x^3)$, which has Hilbert function $(1,3,4,3,1)$. Here for $t\not=0$ ${P_{B_t}=(6,4,2)>(5,3,3,1)=P_{B_0}}$.
\end{example} 
\begin{definition}[The poset $\mathfrak{P}_M$]\label{posetPMdef} 
Let $M$ be a finite-length $\mathcal{A}$ module for an Artinian algebra $\mathcal{A}$. We denote by $\mathfrak{P}_M$ the poset $\{P_\ell \mid \ell\in {\mathfrak{m}}_{\mathcal{A}}\}$, with the dominance partial order. We denote by $\mathfrak{P}_{i,M}$ the sub-poset $\mathfrak{P}_{i,M}=\{P_\ell \mid \ell\in ({\mathfrak{m}}_{\mathcal{A}})^i\}$. 
\end{definition}
There is a related poset $\mathfrak{Z}_M$ of loci within $\mathfrak{m}_A$ or $\mathfrak{m}_\A$ determined by the set of partitions $P_\ell$: this is a subposet of $\mathfrak{P}_M$ but may be a proper subset: see Example \ref{2posetex}.
\begin{example}\label{firstposetex} 
For ${\mathcal{A}}_t=\mathsf{k}\{x,y\}/(tx-y^2,y^3)$, $t\not=0$, from Example \ref{firstdefex}, we have $\mathfrak{P}_{{\mathcal{A}}_t}=\{P_y=(3),\, P_x=(2,1),\, P_0=(1,1,1)\}$. For ${\mathcal{A}}_0=\mathsf{k}[x,y]/(x^2,xy,y^2)$, we have $\mathfrak{P}_{{\mathcal{A}}_0}=\{(2,1),(1,1,1)\}$.
\end{example}
Recall that the elements of a local Artinian algebra $\A$ are parametrized by the affine space $\mathbb{A}^n$, $n=\dim_\mathsf{k}(\A)$ and with our assumption $\mathfrak{m}_{\A}$ is parametrized by the affine space $\mathbb A^{n-1}$. So $\mathfrak{m}_A$ is an irreducible variety. Since the rank of each power of $m_\ell$ acting on $M$ is semicontinuous, and since by Lemma~\ref{JTlem} these ranks determine the Jordan type of $m_\ell$ we have
\begin{lemma}[Generic Jordan type of $M$]\label{genericJTlem}
Given an $A$ or $\mathcal{A}$ module $M$, there is an open dense subset $U_M\subset\mathfrak{m}=\mathbb A^{n-1}$ for which $\ell\in U_M $ implies that the partition $P_\ell$ satisfies $P_\ell\ge P_{\ell'}$ for any other element $\ell'\in \mathfrak{m}$.\par
Likewise, if $\A$ admits a weight function $\w$, then for each weight $i$, there is a dense open set $U_{i,M}\subset\A_i(\w)$ for which $\ell\in U_{i,M}$ implies that $P_\ell\geq P_{\ell'}$ for any other $\ell'\in \A_i(\w)$.
\end{lemma}
\begin{definition}\label{genJTdef}
For $M$ a finite module over a local Artinian algebra $\A$ over $\F$, we define the \emph{generic Jordan type} $P_M$ by $P_M=P_\ell$ where $\ell$ is a generic element of the maximal ideal of $\A$. In the graded case, we may also define the \emph{generic degree $i$ Jordan type} for $M$ is $P_{i,M}=P_\ell$ for $\ell$ a generic element of $A_i$ (it is not defined when $A_1=0$); for $i=1$, we call $P_{1,M}$ the \emph{generic linear Jordan type}. 
\end{definition}
\par Evidently we have 
\[
P_M\geq P_{1,M}\geq\cdots\geq P_{j,M}.
\]
As we next see in Example~\ref{firstex}, when $A$ is a non-standard graded algebra the generic Jordan type $P_A$ may not equal $P_{lin,A}$ even when $A_1\not=0$.
\begin{question} 
Under what conditions on a graded module $M$ over a graded Artinian algebra $A$ does its generic Jordan type satisfy $P_M=P_{1,M},$ the generic linear Jordan type? In particular, let $A$ be a standard-graded Artinian algebra $A$ with unimodal Hilbert function. Is the generic linear Jordan type of $A$ always the same as the generic Jordan type of $A$? Proposition \ref{SLJTprop} shows this when the generic Jordan type is maximal.
\end{question}
Let $A$ be a non-standard-graded Artinian algebra. The Jordan type $P_\ell$ of a non-homogeneous element $\ell\in {\mathfrak{m}}_A$ may be the same as that would be expected for a strong Lefschetz element, even though $A$ may have no linear strong Lefschetz elements, so $A$ is not SL. This we first noticed on the following example of relative covariants proposed by the third author (see \cite[Example 3.7]{MCIM}). 
\begin{example}\label{firstex}
We let $R=\mathsf{k}[y,z]$, with weights $\mathsf{w}(y,z)=(1,2)$, and let $A=R/I$, $I=(yz,z^3,y^7)$, having $\mathsf{k}$-basis $A=\langle 1,y,y^2,z,y^3,y^4,z^2,y^5,y^6\rangle$ and having Hilbert function $H(A)=(1,1,2,1,2,1,1)$, with Macaulay dual $R\circ \langle Z^2,Y^6\rangle$. The only linear element of $A$, up to non-zero constant multiple is $y$, and the partition given by the multiplication $m_y$ is $P_y=(7,1,1)$, so $A$ is not strong-Lefschetz. However the non-homogeneous element $\ell=y+z$, has strings $\{1,y+z,y^2+z^2,y^3,y^4,y^5,y^6\}$ and $\{z,z^2\}$ so $P_\ell=(7,2)$, which is the maximum possible given $H(A)$, so $\ell$ has strong Lefschetz Jordan type (Definition \ref{SLJTdef}). The Macaulay dual generator for $A$ is $F=Z^{[2]}+Y^{[6]}$\par A related local algebra is $\mathcal{A}=\mathcal{R}/(yz,z^3+y^6)$, with weights $\mathsf{w}'(y,z)=(1,1)$, of Hilbert function $H(\mathcal{A})=(1,2,2,1,1,1,1)$; here the element $\ell'=(y+z)\in \mathfrak{m}_{\mathcal{A}}$ has Jordan type $(7,2)$, so $\mathcal{A}$ is strong Lefschetz. The associated graded algebra $\mathcal{A}^\ast$ with respect to $\mathfrak{m}_{\mathcal{A}}$ is $\mathsf{k}[y,z]/(yz,z^3,y^7)$, with standard grading; and $P_{\ell', \mathcal{A}^*}=P_{\ell',A}=(7,2).$ See also Theorem~\ref{thm:JtIneq} and Example \ref{1.36ex}.
\end{example}

\section{Constructions, examples, and commuting Jordan types.}\label{examplessec}
\subsection{Idealization and Macaulay dual generator.}\label{idealizationsec}
The principle of idealization, introduced by M. Nagata to study modules, has been used to ``glue'' an Artinian algebra to its dual, and so to construct graded Artinian Gorenstein (AG) algebras either having non-unimodal Hilbert functions; or with unimodal Hilbert functions but not having a strong or weak Lefschetz property \cite{St,BI,BoLa,MiZa,Bo,Bo2}. Examples of M. Boij \cite{Bo} show that the Hilbert functions of AG algebras may have an arbitrarily high number of valleys, with local maxima at assigned values, at the cost of increasing the embedding dimension. The Jordan types and Jordan degree types -- which are symmetric by Proposition \ref{symprop} -- of these examples have in general not been studied, and could be of interest. H.~Ikeda (H. Sekiguchi) and  H. Ikeda with J. Watanabe, and also M. Boij gave examples of Artinian Gorenstein (AG) algebra having unimodal Hilbert functions, but not satisfying even weak Lefschetz \cite{Se,IkW,Bo2}.  Similar examples involving a partial idealization, also not strong Lefschetz or not weak Lefschetz have been constructed more recently by R.~Gondim and G. Zappal\'{a} \cite{Go,GoZ} and by A. Cerminara, R. Gondim, G.~Ilardi, F. Maddaloni \cite{CGIM}. An example where $m_L$ has Jordan type strictly between weak and strong Lefschetz was already given in \cite[Section 5.4]{H-W}, referencing \cite{IkW}. \par
We will give here several idealization or partial idealization examples where we calculate the generic Jordan type. These idealizations that are Artinian Gorenstein can arise from a particular structure for a homogeneous Macaulay dual generator for the AG algebra (\S \ref{AAMDsec}). There is a far-reaching generalization by S.L. Kleiman and J.O. Kleppe to Macaulay duality over an arbitrary Noetherian base ring \cite{KlKl}, and it would be natural to ask how the notions of Jordan type and idealization studied below might generalize from a base field, as here, to a more general base. \vskip 0.2cm
M. Nagata \cite{Na} introduced the following definition of idealization, see also \cite[Definition~2.75]{H-W}; for some further developments and history see D. Anderson and M. Winters \cite{AnWi}. For characterization by dual generator see \cite{BI}, \cite[Theorem 2.77]{H-W} and for another approach to idealization see \cite[Lemma 3.3]{MSS}.
\begin{definition} 
Let $M$ be an $A$-module. The idealization of $M$ is the algebra $A\oplus M$ whose multiplication is given by $(a,m)\cdot (b,n)=(ab, an+bm)$. 
\end{definition}
The idealization makes $A\oplus M$ into a ring, in which the $R$-submodules of $M$ correspond to the subideals of $M$. A particular example is formed when $M=\Hom (R,\mathsf{k})$ the dual of an Artinian level ring $A$ (i.e., the socle of $A$ is in a single degree): then the idealization is an Artinian Gorenstein algebra. R.~Stanley and subsequently others used this construction to give examples of AG algebras having non-unimodal Hilbert function (Example~\ref{stanex}).
\subsubsection{Examples of Idealization and Jordan type.}
\vskip 0.2cm Our first is the Jordan type for the example of R. Stanley in codimension $13$.
See also \cite[Example 2.28]{BoI} where this was also calculated. We will find that the actual generic linear Jordan type is very far from the bound $P_c(H)$ given in Theorem \ref{thm:contig}.
We will use the notation $m^k$ to represent the partition $(m,\ldots ,m)$ with $k$ parts.
\begin{example}\label{stanex}\cite[Example 4.3]{St}
We let $R=\mathsf{k}[x,y,z]$ and $S=\mathsf{k}[x,y,z, u_1,\ldots u_{10}]$, $\mathfrak{D}=\mathsf{k}_{DP}[X,Y,Z]$ and $\mathfrak{F}=\mathsf{k}_{DP}[X,Y,Z,U_1,\ldots ,U_{10}]$. Stanley's example results from idealization of $R/{\mathfrak{m}_R}^4$ and its dual. We let $I\subset S$, $I=\Ann F$, $F=\sum U_i\Xi_i\in \mathfrak{F}_4$ where $\Xi_1,\ldots \Xi_{10}$ is a basis for $\mathfrak{D}_3\subset\mathfrak{D}$. Then $A=S/I$ has dual module $S\circ F$ satisfying
\begin{align}
S_1\circ F&=\langle R_1\circ F, \Xi_1,\ldots ,\Xi_{10}\rangle\notag\\
S_2\circ F&=\langle \mathfrak{D}_2, R_2\circ F\rangle\notag\\
S_3\circ F&=\langle X,Y,Z, U_1,\ldots ,U_{10}\rangle.
\end{align}
Consequently, $H(A)=(1,13,12,13,1)$ of length 40. Taking a general element $\ell\in S_1$, - up to action of $(\mathsf{k}^\ast)^{\times 13}, $ take $ \ell=x+y+z+u_1+\cdots +u_{10}$, it is straightforward to calculate
\begin{align*}
H\bigl(S/\Ann (\ell\circ F)\bigr)&=(1,9,9,1), &H(S/\Ann \bigl(\ell^2\circ F)\bigr)&=(1,6,1),\\
H\bigl(S/\Ann (\ell^3\circ F)\bigr)&=(1,1), &H\bigl(S/\Ann (\ell^4\circ F)\bigr)&=(1).
\end{align*}
By Lemma \ref{JT2lem} Equation \eqref{dualeq}, the conjugate
$(P_\ell)^\vee$ is the first differences $\Delta$ of the lengths of the modules $S/\Ann (\ell^k\circ F)$ for $0\le k\le 4$, so here
\begin{equation}
(P_\ell)^\vee=\Delta (40,20,8,2,1)=(20,12,6,1,1).
\end{equation} 
Thus, $P_\ell=(5,3^5,2^6,1^8)$ with $20$ parts in contrast to $P_c(H)=(5,3^{11},1^2)$. This is related to the following:\vskip 0.15cm
$\quad m_{\ell^2}\colon A_1\to A_3$ has rank 6 (from the five parts 3, and one part 5), but \par
$\quad m_\ell\colon A_1\to A_2$ has rank 9, and kernel rank 4. \vskip 0.15cm\noindent
By symmetry $m_\ell\colon A_2\to A_3$ also has rank $9$ and cokernel rank 4.\par\noindent
Note that the contraction $R_1\circ \mathfrak{D}_3=\mathfrak{D}_2$ takes a 10-dimensional space to a 6 dimensional space: thus any multiplication map $m_\ell: \mathfrak{D}_3\to\mathfrak{D}_2$ has kernel rank at least 4. The Jordan degree type of this $\ell$ is 
\begin{equation}\label{dJTexeq}
\mathcal{P}_\ell=\big( P_{\ell,0}=5, P_{\ell,1}=(3^5,2^3,1^4), P_{\ell,2}=(2^3), P_{\ell,3}=(1^4)\big).
\end{equation}
According to Theorem \ref{thm:contig} the maximum Jordan type of a multiplication map $m_\ell$ for $\ell\in \mathfrak{m}_A$ (non-homogeneous) consistent with the Hilbert function $H=H(A)$ would be $P_c(H)=(5,3^{11},1^2)$ with 14 parts for a linear form. For an element $\ell\in{\mathfrak{m}}$ the upper bound would be $P(H)= (5,3^{11},2)$ with 13 parts expected -- if a quadratic term in $\ell$ takes the kernel of the linear part $m_{\ell_1}$ on $A_1$ to an element of $A_3$ not in $\ell_1\cdot A_2 $. Here the actual generic linear Jordan type $P_\ell=(5,3^5,2^6,1^8)$ with 20 parts for $\ell\in A_1$ (see above) or even for $\ell\in \mathfrak{m}_A$ (verified for several random $\ell\in\mathfrak{m}_A$, by calculation in \textsc{Macaulay2}) is very far from these bounds; therefore, $A$ does not have the Lefschetz property relative to $H$ of Definition \ref{def:relativeLef}.
\end{example}
\vskip 0.2cm
R. Gondim applying work of T. Maeno and J. Watanabe \cite{MW} relating higher Hessians and Lefschetz properties, exhibited Gorenstein algebras $A$ with bihomogeneous dual generators of the form $F=\sum \mu_i\nu_i$, in $\mathfrak{F}=\mathsf{k}_{DP}[X,U]$, such that $A$ does not satisfy weak Lefschetz, or, sometimes, has generic Jordan type strictly between WL and SL \cite{Go}. Here are two examples from R. Gondim.\footnote{The first example is from R. Gondim's talk at the workshop ``Lefschetz Properties and Artinian algebras'' at BIRS on March 15 2016, at ``https://www.birs.ca/workshops/2016/16w5114/files/Gondim.pdf''. The second is a private communication from R. Gondim, following a discussion there with the first author.}
\begin{example}(R. Gondim) Consider the cubic $f\in\mathfrak{F}$
\begin{equation}
f =X_1U_1U_2+X_2U_2U_3+X_3U_3U_4+X_4U_4U_1.
\end{equation}
The associated algebra $A = R/I$, of Hilbert function $H(A)=(1,8,8,1)$ with $I =\Ann f$ does not have the WLP: the map $\ell:A_1\to A_2$ is not injective for any $\ell\in A_1$. The algebra $ A$ is presented by 28 quadrics:
\begin{align*}
I = &(\mathfrak{m}_x^2,u_1^2,u_2^2,u_3^2,u_4^2,u_1u_3,u_2u_4,x_1u_3,x_1u_4,x_2u_4,x_2u_1,x_3u_1,x_3u_2,x_4u_2,x_4u_3\\&
x_1u_1-x_2u_3,x_2u_2-x_3u_4,x_3u_3- x_4u_1,x_4u_4-x_1u_2
) .
\end{align*}
and has Jordan type $(4, 2^6, 1, 1)$ so strictly less than $H(A)^\vee = (4, 2^7)$. A random element (non-homogeneous) gives the same Jordan type (calculation in \textsc{Macaulay2}).
\end{example}
\begin{example} (R. Gondim) 
Let $F=XU^{[3]}+YUV^{[2]}+ZU^{[2]}V\in\mathfrak{F}=\mathsf{k}_{DP}[U,V,X,Y,Z]$. Consider $R=\mathsf{k}[u,v,x,y,z]$ and the algebra $A = R/I, I=\Ann F$, where
\[
I=\langle x^2, y^2, z^2, u^4, v^3, xy, xz, yz, xv, zv^2, yu^2, u^2v^2, u^3v, xu-zv, zu-yv\rangle.
\]
Since $A$ is a bigraded idealization it is easy to see that $H(A)=(1,5,6,5,1)$. Since the partial derivatives $x\circ F=U^{[3]}, y\circ F=UV^{[2]}$ and $z\circ F=U^{[2]}V$ are algebraically dependent, by the Gordan-Noether criterion (\cite{GorN,Go,MW}) the Hessian Hess$_F=0$. By the Maeno-Watanabe criterion (\cite{MW}, \cite[Theorem 3.76]{H-W}) this implies that $A$ fails the strong Lefschetz Property. On the other hand it is easy to see that $u+v$ is a WL element for $A$. \par
Since $A$ is not strong Lefschetz, the Jordan decomposition $P_\ell$ for $\ell=u+v+x+y+z$ (a generic-enough linear form) is by Theorem \ref{thm:JtIneq} less in the dominance order than the conjugate $H(A)^\vee=(5,3^4,1)$; since $A$ has WLP, $P_\ell $ has the same number of parts as $H(A)^\vee$, namely the Sperner number $H(A)_\max=6$. Since $\ell^4\not=0$ in $A$, the string $S_1=(1,\ell,\ell^2,\ell^3,\ell^4)$ so $P_\ell$ has a part 5; since $P_\ell<(5,3^4,1)$ and has 6 parts the only possibility is $P_\ell =(5,3,3,3,2,2)$, with Jordan degree type $\mathcal{P}_{\ell}=(\mathsf{5}_0,\mathsf{3}^3_1,\mathsf{2}_1,\mathsf{2}_2).$
By Proposition \ref{SLJTprop} since $A$ is standard graded, has unimodal Hilbert function, and is not strong Lefschetz, $A$ cannot have an element -- even non-homogeneous -- that has strong Lefschetz Jordan type.
\end{example}
R. Gondim gives many further examples, using special bihomogeneous forms. R. Gondim and G. Zappal\`{a} have determined further graded Gorenstein algebras that are non-unimodal, sometimes completely non-unimodal (with decreasing Hilbert function from $h_1$ to $h_{j/2}$, then increasing to degree $j-1$): they accomplish this by using properties of complexes to choose a suitable bihomogeneous dual generator $f\in \mathfrak{F}$ \cite{GoZ}. In a sequel work A. Cerminara, R. Gondim, G. Ilardi, F. Maddaloni study ``higher order'' $(d_1,d_2)$ AG Nagata idealizations determined by Macaulay dual forms $\sum\mu_ig_j$ where $\mu_i$ runs through the $d_1$ powers of one set $X_1,\ldots, X_r$ of variables, while $g_i\in \mathsf{k}[U_1,\ldots, U_s]_{d_2}$ have degree $d_2$: they show in specific cases that the idealization is not SL, but when $d_1\ge d_2$ they show it is WL \cite[Proposition~2.7]{CGIM}; their main results are related to geometric properties and ``simplicial'' Nagata polynomials (ibid. Theorem 3.5). A. Capasso, P. De Poi, and G. Ilardi generalize this work in \cite{CDI}.  J. Mccullough and A. Seceleanu use idealization and a subtle choice of base level algebra to construct a new infinite sequence of quadratic Gorenstein rings with, in general, non-unimodal Hilbert functions, that are non-Koszul, with non-subadditive minimal resolutions (\cite[Theorem 4.3]{McSe}):  they have not been studied for their Jordan types.  
\subsection{Tensor products and Jordan type.}\label{tensorprodsec}
It is well known that a graded Artinian algebra $A$ with symmetric Hilbert function is SL with Lefschetz element $\ell\in A_1$ if and only if $A$ carries an $\mathfrak{sl}_2$-representation where the raising operator $E$ in the standard basis of $\mathfrak{sl}_2$ is multiplication by $\ell$, and element of $A_1$, and the weight space decomposition agrees with the grading of $A$, see \cite[Theorem 3.32]{H-W}. Equivalently, such a pair $(A,\ell)$ is strong Lefschetz if  $P_\ell=H^\vee$ (Proposition \ref{prop:LefHilb}).
The well-known Clebsch-Gordan formula decomposes a tensor product of $\mathfrak{sl}_2$ representations into irreducibles; it is equivalent to the strong Lefschetz property of $\mathsf{k}[x,y]/(x^m, y^n).$ 
\begin{lemma}[Clebsch-Gordan]\label{Clebschlem} Assume $m\ge n$ are positive integers, and $\cha \mathsf{k}=0$ or $\cha \mathsf{k}\ge m+n-1$. Then the Jordan type of $\ell=x+y$ in $ \mathsf{k}[x,y]/(x^m, y^n)$ satisfies
\begin{equation}\label{Clebscheq}
P_\ell=(m+n-1,m+n-3,m+n-5,\ldots, m-n+1 ).
\end{equation}
\end{lemma}
To prove this it is sufficient to know the strong Lefschetz property of standard graded quotients of $\mathsf{k}[x,y]$ in characteristic zero or characteristic larger than the socle degree (Lemma~\ref{heighttwolem}). See also \cite[Theorem 3.29 and Lemma 3.70]{H-W}. As a consequence we have
\begin{proposition}
\label{tensorprop}
\cite[Prop. 3.66]{H-W}.\footnote{Although \cite{H-W} restricts to $\cha \mathsf{k}=0$, there is no change in showing it for $\cha \mathsf{k}=p$ large enough.} Let $z\in A$, $w\in B$ be two non-unit elements of Artinian local algebras $ A, B$.
Set $P_z=(d_1,d_2,\ldots, d_t)$ and $P_w=(f_1,f_2,\ldots f_s)$. Denote by $\ell=z\otimes 1+1\otimes w\in A\otimes_{\sf k} B.$ Assume $\cha \mathsf{k}=0$, or $\cha \mathsf{k}\ge \max\{d_i+f_j-1\}$. Then
\begin{equation}\label{tensorpropeq}
P_\ell=\oplus_{i,j}\oplus_{k=1}^{\min\{ d_i,f_j\}}(d_i+f_j+1-2k).
\end{equation}
Also, $\dim_\mathsf{k} \ker (\times \ell)=\sum_{i,j}\min\{d_i,f_j\}.$
\end{proposition}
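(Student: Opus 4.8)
The plan is to reduce the assertion to the two–variable Clebsch--Gordan computation of Lemma~\ref{ClGlem}, by decomposing $A\otimes_{\sf k}B$ as a module over a polynomial ring in two commuting variables acting by $m_{z\otimes 1}$ and $m_{1\otimes w}$.

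First I would view $A$ as a ${\sf k}[u]$-module via $u\mapsto m_z$ and $B$ as a ${\sf k}[v]$-module via $v\mapsto m_w$. Here $z$ and $w$ are nilpotent (they are non-units in algebras with residue field ${\sf k}$), so by Lemma~\ref{JTlem} together with the string decomposition of Definition~\ref{stringdef} there are ${\sf k}[u]$-module and ${\sf k}[v]$-module isomorphisms $A\cong\bigoplus_{i=1}^{t}{\sf k}[u]/(u^{d_i})$ and $B\cong\bigoplus_{j=1}^{s}{\sf k}[v]/(v^{f_j})$, each Jordan block being a cyclic module killed by the relevant power. Since $m_{z\otimes 1}=m_z\otimes\mathrm{id}_B$ and $m_{1\otimes w}=\mathrm{id}_A\otimes m_w$ commute, $A\otimes_{\sf k}B$ is a module over ${\sf k}[u]\otimes_{\sf k}{\sf k}[v]={\sf k}[u,v]$, and tensoring the two decompositions gives a ${\sf k}[u,v]$-module isomorphism
\begin{equation*}
A\otimes_{\sf k}B\;\cong\;\bigoplus_{i,j}{\sf k}[u,v]/(u^{d_i},v^{f_j}),
\end{equation*}
under which $\ell=z\otimes 1+1\otimes w=m_{z\otimes 1}+m_{1\otimes w}$ acts as multiplication by $u+v$ on each summand.

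Next I would apply Lemma~\ref{ClGlem} to each summand ${\sf k}[u,v]/(u^{d_i},v^{f_j})$, which is $R(d_i,f_j)$ in the notation of that lemma, with the general linear form $\ell=u+v$. Its socle degree is $d_i+f_j-2$, and the hypothesis $\cha{\sf k}=0$ or $\cha{\sf k}>\max\{d_i+f_j\}$ guarantees, for every pair $(i,j)$, that $\cha{\sf k}=0$ or $\cha{\sf k}\ge d_i+f_j-2$, so Lemma~\ref{ClGlem} applies and shows that $\ell$ has Jordan type $\bigoplus_{k=1}^{\min\{d_i,f_j\}}(d_i+f_j+1-2k)$ on that summand, comprising exactly $\min\{d_i,f_j\}$ Jordan blocks. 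Jordan type is additive over a direct sum of $m_\ell$-invariant subspaces (the Jordan canonical form of a block-diagonal map is the concatenation of the partitions; equivalently, the ranks $\rk\,m_{\ell^k}$ add over the summands, and by Lemma~\ref{JTlem} these ranks determine the partition), so summing over all pairs $(i,j)$ yields \eqref{tensorpropeq}. Finally, $\dim_{\sf k}\Ker(\times\ell)$ equals the number of parts of $P_\ell$, which is $\sum_{i,j}\min\{d_i,f_j\}$.

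Essentially all the content is carried by Lemma~\ref{ClGlem}, which is assumed; the only step needing care is verifying that the Jordan/string decompositions of $A$ over ${\sf k}[u]$ and of $B$ over ${\sf k}[v]$ really do tensor to a ${\sf k}[u,v]$-module decomposition of $A\otimes_{\sf k}B$ on which $\ell$ acts summand-by-summand as $u+v$ — no ``simultaneity'' of the two decompositions is required, since the two actions live on different tensor factors and commute automatically — and that the characteristic bound in the statement is precisely what is needed to invoke Clebsch--Gordan on all summands at once.
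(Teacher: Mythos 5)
The paper does not supply its own proof of Proposition~\ref{tensorprop}: it simply cites \cite[Prop.~3.66]{H-W}, with a footnote remarking that the argument extends from characteristic zero to large positive characteristic. Your reduction is a correct proof and is in fact the standard one underlying the cited result: decompose $A\otimes_{\sf k}B$ as a ${\sf k}[u,v]$-module into summands ${\sf k}[u,v]/(u^{d_i},v^{f_j})$ by tensoring the Jordan block decompositions of $m_z$ and $m_w$ (this works because the two actions live on different tensor factors, hence commute with no compatibility condition to check), apply the Clebsch--Gordan Lemma~\ref{ClGlem} to $\ell=u+v$ on each such two-variable complete intersection, and use additivity of Jordan type over a direct sum of $m_\ell$-invariant subspaces (the ranks $\rk\,m_{\ell^k}$ add over summands and determine $P_\ell$ by Lemma~\ref{JTlem}). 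The characteristic hypothesis $\cha {\sf k}>\max_{i,j}\{d_i+f_j\}$ comfortably exceeds the bound $\cha {\sf k}\ge d_i+f_j-2$ needed in Lemma~\ref{ClGlem} for each pair $(i,j)$, and the kernel count $\sum_{i,j}\min\{d_i,f_j\}$ is just the number of parts of the resulting partition. No gaps.
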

Recall that we denote by $j_A$ the socle degree of $A$. The following corollary of Proposition~\ref{tensorprop} is not hard to show, and we leave the proof to the reader. With the additional assumption that $H(A)$ and $H(B)$ are unimodal (so $A,B$ are SL in the narrow sense of Remark \ref{rem:Narrow}) this Corollary is shown in
\cite[Theorem~3.34]{H-W}.
\begin{corollary}\label{SLcor}
Assume that $A,B$ are graded Artinian algebras with symmetric Hilbert functions and that $\cha \mathsf{k}=0$ or $\cha \mathsf{k}>j_A+j_B $. Then the element $\ell=z\otimes 1+1\otimes w\in A\otimes_{\sf k} B$ is SL if and only if $z$ and $w$ are both SL, respectively, in $A$ and in $B$.
\end{corollary}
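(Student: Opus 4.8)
The plan is to reduce the statement entirely to Proposition \ref{tensorprop} together with the characterization of the strong Lefschetz property in terms of Jordan type. Recall that for an Artin algebra $A$ with symmetric unimodal Hilbert function $H(A)$, an element $z\in A_1$ (or, in the local sense, $z\in\maxA$) is SL if and only if $P_z=H(A)^\vee$, and that by Lemma \ref{ClGlem} the latter partition, written as a direct sum of Jordan blocks, is exactly $\oplus_k (j_A+1-2k)$ over an appropriate range of $k$; more precisely, the conjugate of a symmetric unimodal sequence is itself a partition whose parts all have the parity of $j_A$, and conversely such a partition has symmetric unimodal conjugate. So the first step is to record this purely combinatorial fact: a partition $P$ of $\dim_{\sf k} A$ equals $H(A)^\vee$ for the given symmetric unimodal $H(A)$ precisely when $P$ has the right number of parts (the Sperner number) and all parts congruent to $j_A\pmod 2$, together with the size constraint $\sum p_i = \dim_{\sf k} A$; equivalently, $P$ is the ``staircase'' partition determined by $H(A)$.

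Next I would apply Proposition \ref{tensorprop} with $z$ SL in $A$ and $w$ SL in $B$, so $P_z=\oplus_i(d_i)$ with all $d_i\equiv j_A\pmod 2$ and $P_w=\oplus_j(f_j)$ with all $f_j\equiv j_B\pmod 2$. Then each block $d_i+f_j+1-2k$ appearing in \eqref{tensorpropeq} is congruent to $d_i+f_j+1\equiv j_A+j_B+1\pmod 2$, i.e.\ has the parity of $j_{A\otimes B}=j_A+j_B$. Since $P_\ell$ consists entirely of parts of the correct parity and has the correct size $\dim_{\sf k}(A\otimes B)$, to conclude $P_\ell=H(A\otimes B)^\vee$ it remains only to check it has the maximal number of parts, namely $\mathrm{Sperner}(A\otimes B)=\mathrm{Sperner}(A)\cdot\mathrm{Sperner}(B)$ (the Hilbert function of a tensor product being the product of the Hilbert function polynomials, and the maximum of a product of symmetric unimodal sequences occurring at the product of the individual maxima). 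The number of parts of $P_\ell$ is $\dim_{\sf k}\mathrm{Ker}(\times\ell)=\sum_{i,j}\min\{d_i,f_j\}$ by Proposition \ref{tensorprop}, while the number of parts of $P_z$ is $\dim_{\sf k}\mathrm{Ker}(\times z)=\mathrm{Sperner}(A)$ and similarly for $w$; one checks that $\sum_{i,j}\min\{d_i,f_j\}=(\#\text{parts of }P_z)(\#\text{parts of }P_w)$ exactly when both $P_z$ and $P_w$ are themselves the staircase partitions of symmetric unimodal sequences (so that the parts of $P_z$, being forced to all have parity $j_A$ and to sum to $\dim_{\sf k}A$, are as ``spread out'' as possible), giving the product formula. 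This proves the ``only if'' half and, read backwards, most of the ``if'' half.

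For the converse direction — $\ell$ SL implies $z$ and $w$ both SL — I would argue by contraposition using semicontinuity and the bound $P_z\le H(A)^\vee$ of Theorem \ref{sfPeqthm}. If, say, $z$ is not SL then $P_z<H(A)^\vee$ strictly in dominance, so in particular $P_z$ has \emph{more} parts than $H(A)^\vee$ (this uses symmetry and unimodality of $H(A)$: among partitions of a fixed size dominated by the staircase, a strictly smaller one has strictly more parts, or at least the kernel rank strictly exceeds the Sperner number — this needs care but follows from \eqref{conjPeq} since $P_z^\vee > (H(A)^\vee)^\vee$ means $P_z^\vee$ has a strictly larger first part, i.e.\ $\dim\mathrm{Ker}(\times z) > \mathrm{Sperner}(A)$). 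Then by Proposition \ref{tensorprop}, $\dim_{\sf k}\mathrm{Ker}(\times\ell)=\sum_{i,j}\min\{d_i,f_j\}$ strictly exceeds $\mathrm{Sperner}(A)\cdot\mathrm{Sperner}(B)=\mathrm{Sperner}(A\otimes B)$, using that $\sum_{i,j}\min\{d_i,f_j\}$ is monotone in each of $P_z,P_w$ under refinement. Hence $P_\ell$ has too many parts to be $H(A\otimes B)^\vee$, so $\ell$ is not SL.

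The main obstacle I expect is the combinatorial lemma underpinning both directions: that for a symmetric unimodal $H$, the staircase partition $H^\vee$ is characterized among partitions of $\dim_{\sf k}A$ dominated by it by having the minimum number of parts, and that this minimum number of parts multiplies correctly under the $\sum_{i,j}\min\{d_i,f_j\}$ operation of the Clebsch–Gordan formula. Equivalently, one must show $\sum_{i,j}\min\{d_i,f_j\}\ge (\#P_z)(\#P_w)$ always, with equality iff both partitions have all parts of the ``right'' parity and maximal spread — essentially a statement that the Clebsch–Gordan tensor operation preserves the SL/staircase shape and detects any deviation from it by an increase in kernel dimension. Once that inequality and its equality case are pinned down, everything else is bookkeeping with parities and the identity $H(A\otimes B)={\sf p}(A)\cdot{\sf p}(B)$.
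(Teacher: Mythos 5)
Your proof rests on a combinatorial lemma that is false, and both directions of your argument depend on it. You claim that a partition $P$ of $\dim_{\sf k}A$ equals $H(A)^\vee$ precisely when it has the Sperner number of parts, all parts of the same fixed parity, and the right total; and, in the converse direction, that if $P_z < H(A)^\vee$ strictly in dominance then $P_z$ has strictly more parts, deducing this from $P_z^\vee > (H(A)^\vee)^\vee$ via the ``strictly larger first part.'' Neither claim is correct. Strict dominance $P^\vee > Q^\vee$ only gives $\sum_{k\le i}P^\vee_k \ge \sum_{k\le i}Q^\vee_k$ for all $i$ with strictness somewhere, not necessarily at $i=1$. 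Concretely, take $A = {\sf k}[x,y]/(x^3,y^3)$, so $H(A)=(1,2,3,2,1)$ is symmetric unimodal of socle degree $j_A=4$, Sperner number $3$, and $H(A)^\vee=(5,3,1)$. The element $z=x$ is \emph{not} SL (the map $\times x^4\colon A_0\to A_4$ is zero), yet $P_x=(3,3,3)$ has exactly $3$ parts, all of the same parity as the parts of $(5,3,1)$, sums to $9$, and satisfies $(3,3,3)<(5,3,1)$ strictly in dominance. So the SL Jordan type is not characterized by part-count plus parity, and a strictly smaller Jordan type need not have a larger kernel. Taking $B={\sf k}[w]/w$, so that $A\otimes B\cong A$, your ``only if'' argument would incorrectly let you conclude nothing from $\ell=x$ not being SL, and your ``if'' argument's claimed equality case of the $\sum\min$ inequality is not a characterization of SL.

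(There is also a minor parity slip: for symmetric unimodal $H$ of socle degree $j$, the parts of $H^\vee$ are all congruent to $j+1\pmod 2$, not $j\pmod 2$; each part has the form $j+1-2k$.)

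The paper does not actually write out a proof of this corollary (it is ``left to the reader''), so there is no paper argument to compare against, but a correct route must use more than the kernel dimension. The clean way to run the ``if'' direction is to decompose $A$ as a ${\sf k}[z]$-module into cyclic strings, observe that when $z$ is SL these strings are \emph{centered} in degree (the string of length $j_A+1-2k$ starts in degree $k$ and ends in degree $j_A-k$), and similarly for $B$ and $w$; then $A\otimes B$ decomposes into tensor products of centered strings $R(d_i,f_j)$, each of which is SL for $\ell=z\otimes 1+1\otimes w$ by the Clebsch--Gordan Lemma~\ref{ClGlem}, with the correct degree shifts. Summing over $(i,j)$ and comparing with the convolution identity $H(A\otimes B)=H(A)\ast H(B)$ then gives $P_\ell=H(A\otimes B)^\vee$. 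For the ``only if'' direction one needs a genuine monotonicity statement — if $P_z<P_z'$ in dominance and $P_w$ is fixed, then the Clebsch--Gordan output satisfies $P_\ell<P_\ell'$ in dominance — together with the bound $P_z\le H(A)^\vee$ of Theorem~\ref{sfPeqthm}; neither follows merely from tracking the number of parts. Your proposal correctly identifies Proposition~\ref{tensorprop} as the engine, but the reduction to a kernel-dimension count does not close the argument.
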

For a different proof of Corollary \ref{SLcor}, resting on the connection between the strong Lefschetz property of $C$ and the weak Lefschetz properties of $C\otimes_{\sf k} \mathsf{k}[t]/(t^i)$ see T. Harima and J.~Watanabe's \cite[Theorem~3.10]{HW}. It is open whether $A\otimes_{\sf k} B$ is SL implies both $A$ and $B$ are SL, without a prior assumption on the Hilbert functions of $A,B$.\par
We may use Proposition \ref{tensorprop} to determine the Jordan types of other, special elements $\ell\in\mathfrak{m}_A$ for certain Artinian algebras $A$.
\begin{example}\cite[Cor. 0.4]{DIKS} Consider $\ell=x^2+y^2\in A= \mathsf{k}[x,y]/(x^3,y^3)$ and suppose $\cha \mathsf{k}\not= 2,3.$ Then $P_\ell=(3,2,2,1,1)$. Indeed here $P_{x^2}=(2,1)$ on $\mathsf{k}[x]/(x^3)$; likewise $P_{y^2}=(2,1)$ on $\mathsf{k}[y]/(y^3)$, and hence by Proposition \ref{tensorprop}
\[
P_\ell=(2+2+1-2,2+2+1-4)\oplus(2+1+1-2)\oplus(1+2+1-2)\oplus(1+1+1-2)=(3,2,2,1,1).
\]
We found that for $\cha \mathsf{k}\not= 2,3$ we could achieve all Jordan types of $\ell\in \mathfrak{m}_A$ from elements of the form $\ell=x^a+y^b$ or $\ell =x^a$ using Proposition \ref{tensorprop}, except for
$P_{x^2y}=(2,2,1^5)$ (\cite[Cor. 0.4]{DIKS}).
\end{example} 
The following is a special case of a family of examples due to J. Migliore, U. Nagel, and H.~Schenck \cite{MNS}: they show that without the assumption that the component Hilbert functions are symmetric, the tensor product in general will not preserve any Lefschetz properties.
\begin{example}
\label{ex:MNS}
With the standard gradings, set 
\[
A=B=\frac{\F[x,y]}{(x^2,y^2,xy)} \text { and } \ C=A\otimes_\mathsf{k}B\cong\frac{\F[x,y,z,w]}{(x^2,y^2,z^2,w^2,xy,zw)}.
\]
A $\F$-basis for the tensor product $C$ is $\{1,x,y,z,w,xz,yz,xw,yw\}$, of Hilbert function $H(C)=(1,4,4)$. For a general linear form $\ell=ax+by+cz+dw\in C_1$ the matrix for $\times\ell\colon C_1\rightarrow C_2$ with respect to that basis is given by
\[
M_\ell=
\begin{pmatrix}
c & 0 & a & 0\\ 
d & 0 & 0 & a\\ 
0 & c & b & 0\\ 
0 & d & 0 & b\\
\end{pmatrix}
\]
which has $\det(M)=0$ for every $a,b,c,d\in\F$. Therefore $C$ is not even WL, let alone SL. Also, since $C$ is standard graded, Proposition \ref{SLJTprop} implies that $C$ cannot have SLJT. On the other hand, $A$ and $B$ certainly have all of these properties.
\end{example}

\subsubsection{Jordan degree type and Clebsch-Gordan.}
Following the Definition \ref{degreeJT-def} Equation \eqref{string2eq} of Jordan degree type, we denote by $\mathsf{m}_s$ a string of length $m$ beginning in degree $s$, and by $\mathsf{n}_t$ a string of length $n$ beginning in degree $t$. We next specify the Jordan degree type of their tensor product $\mathsf{m}_s\otimes_{\sf k} \mathsf{n}_t$: our result refines the Clebsch-Gordan Lemma \ref{Clebschlem}.
\begin{proposition}\label{tpdJTprop} Under the same assumptions on characteristic as in Corollary \ref{SLcor}, we have for Jordan degree type, 
\begin{equation}
\begin{aligned}
\mathsf{m}_s\otimes_{\sf k} \mathsf{n}_{t}&=\bigl((\mathsf{n}+\mathsf{m}-1)_{s+t}, (\mathsf{n}+\mathsf{m}-3)_{s+t+1},\ldots, (\mathsf{n}-\mathsf{m}+1)_{s+t+m-1}\bigr)\\
&=\oplus_{k=1}^m (\mathsf{n}+\mathsf{m}+1-2k)_{s+t+k-1}\label{dpdJTeq}
\end{aligned}
\end{equation}
\end{proposition}
\begin{proof} This follows from the Definition \ref{degreeJT-def} of contiguous Jordan degree type of a Hilbert function, and noting that when $s=t=0$ the Jordan degree type of the algebra $A=\mathsf{k}[x,y]/(x^m,y^n)$ is just the contiguous Jordan degree type $P_c(H)$ of the Hilbert function $H=H(A)$, which is the right side of Equation \eqref{dpdJTeq}. For $s,t\not=0$ one just shifts the degrees by $s+t$.
\end{proof}\par
We leave to the reader the statement of the obvious analogue of Proposition \ref{tensorprop} where we replace the $d_i$ and $f_j$ by $(\sf {d_i})_{\alpha_i}$ and $(\sf {f_j})_{\beta_j}$, respectively.  Using this, we may define the tensor product of contiguous partitions (which have the initial degree information for strings) $\P_{c,\deg}\bigl(H(A)\bigr)\otimes \P_{c,\deg}\bigl(H(B)\bigr)$.  The following Corollary is shown using the layering of $\P_{c,\deg}(H)$ when $H$ is unimodal: then the Jordan degree type $\P_{c,\deg}(H)$ is determined by the conjugate partition $H^\vee$. 
\begin{corollary}\label{JDtensorcor} Assume that $A,B$ are standard graded with unimodal Hilbert functions $H(A),H(B)$.\par
i.  Then we have for the contiguous partitions 
\begin{equation}\label{contigprodeq}
\P_{c,\deg}(H(A\otimes_{\sf k} B))=\P_{c,\deg}(H(A))\otimes \P_{c,\deg}(H(B)).
\end{equation}\par
ii. Also, if $\alpha\in A_1$ has Jordan degree-type $\P_{c,\deg}(H(A))$ and $\beta\in B_1$ has that of the contiguous partition $\P_{c,\deg}(H(B))$, then $\ell=\alpha\otimes 1_B+1_A\otimes \beta$ has the Jordan degree type $\P_{c,\deg}(H(A\otimes_{\sf k} B))$.
\end{corollary}
The following example shows that the unimodal condition is necessary for Equation \eqref{contigprodeq}.
\begin{example}
 Letting $H(A)=(1, 13, 12, 13,1)$ and $ H(B)=(1,1)$, we have $P_c\bigl(H(A)\bigr)=$ $(5_0,3_{1}^{11}, 1_1,1_3)$, and $P_c\bigl(H(B)\bigr)=(2_0)$ with product $P_c\bigl(H(A)\bigr)\otimes P_c\bigl(H(B)\bigr)=(6_0,4_1^{12},2_2^{11},2_1,2_3)$. But $H(A)\otimes H(B)=(1,14,25,25,14,1)$, and $P_c\bigl(H(A\otimes_{\sf k} B)\bigr)=(6_0, 4_1^{13},2_2^{11})$.
 
 \end{example}

The next example shows that even with symmetric Hilbert functions, we should not expect tensor products to preserve the relative Lefschetz property of Definition \ref{def:relativeLef} (involving Jordan degree-type and the contiguous Hilbert function) when the grading of $A$ or $B$ is not standard.\vskip 0.2cm
\begin{example}
\label{ex:MLtensor}
Let $\alpha\in \mathsf{k}$ and  $A=\F[a,b]/(a^3-\alpha ab,b^3)$ with weight function $\w(a,b)=(1,2)$, and $B=\F[t]/(t^2)$ with the standard grading.  It is easy to see that both $A$ and $B$ have the relative Lefschetz property for their Hilbert functions, which are, respectively, $H(A)=(1,1,2,1,2,1,1)$ and $H(B)=(1,1)$. On the other hand, the Hilbert function of their tensor product $C=A\otimes_\mathsf{k}B$ is $H(C)=(1,2,3,3,3,3,2,1)$ which is unimodal. If $\cha \mathsf{k}=0$ or $\cha \mathsf{k}>7$ then $C$ cannot have the relative Lefschetz property: if it did, then by Proposition \ref{SLJTprop} $C$ would also be SL which would imply by Corollary \ref{SLcor} (for which we need the assumptions on characteristic of $\sf k$) that both $A$ and $B$ are SL, which is impossible since $H(A)$ is not unimodal. 
\end{example}
\begin{example}[SLJT in a tensor product]\label{2.44ex}
Let $S=\mathsf{k}[e]$ with weight $\mathsf{w}(e)=2$, $S^\vee=\mathfrak{E}=\mathsf{k}[E]$ and $A=S/I_F$, $F=E^2$, so $A=S/(e^3)$ and $H(A)=(1,0,1,0,1)$. Now let $R=\mathsf{k}[x,y]$ with standard grading and take $B=\mathsf{k}[x,y]/(x^2,y^2)$; here $(x^2,y^2)=\Ann(XY)$, $XY\in R^\vee=\mathsf{k}_{DP}[X,Y]$, and $H(B)=(1,2,1)$. Now consider $A\otimes_\mathsf{k} B=\mathsf{k}[x,y,e]/(x^2,y^2,e^3)$
of Hilbert function $H(A)\otimes H(B)=(1,2^5,1)$. Note that $\ell=x+y+e$ has Jordan type $P_\ell=(7,5)$; $\ell $ is not homogeneous but this shows that $A\otimes_{\sf k} B$ has SLJT. 
Since $A\otimes_{\sf k} B$ is not standard-graded, even though $H(A\otimes_{\sf k} B)$ is unimodal and symmetric, having an SLJT element does not imply that $A\otimes_{\sf k} B$ is SL. It is not SL as the only linear elements $x,y,x+y$ (up to scalar) of $A\otimes_{\sf k} B$ are not SL: for example $P_{x+y}=(3^3,1^3)$. \par
If we regrade so that $e$ has degree one, then we have a standard graded CI of generator degrees $(2,2,3)$, Hilbert function $H_{(2,2,3)}=(1,3,4,3,1)$ and Jordan type $P_\ell=(5,3,3,1)$, and it is SL. 
\end{example}
This example suggests
\begin{conjecture}\label{SLJTconj} If $A$ has SLJT, and $B$ is standard graded SL, and if $H(A\otimes_{\sf k} B)=H(A)\otimes H(B)$ (the graded product) then $A\otimes_{\sf k} B$ has SLJT. 
\end{conjecture} 
\subsubsection{Clebsch-Gordan in the modular case.}
\begin{remark}\label{modularTPrem} 
There is substantial work determining Clebsch-Gordan formulas in the modular case $\cha p\le j$. S.P. Glasby, C.E. Praeger, and B. Xia in \cite{GPX1} summarize previous algorithmic results of Kei-ichiro Iima and Ryo Iwamatsu \cite{IiI} using Schur functions, and of J.-C. Renaud \cite{Re}; they obtain formulas that in principle allow one to compute the generic Jordan type of $R(m,n)$ in arbitrary characteristic $p$ -- they term this the Jordan type $\lambda(m,n,p)$ of $R(m,n,p)$, which always has $m$ parts -- so $R(m,n,p)$ is always weak Lefschetz (a result they ascribe to T. Ralley \cite{Ra}). In \cite[Theorem 2]{GPX2} they show that $R(m,n,p)$ has SLP (in their language, ``is standard'') if $n\not\equiv\pm 1, \pm 2, \dots \pm m \mod p$. 
They define a deviation vector $\epsilon (m,n,p)=\lambda(m,n,p)-(n,n,\ldots ,n)$, and show 
\begin{lemma}\cite[Theorems 4,6,7]{GPX2}\label{dualitythm} Let $m\le\min\{ p^k,n,n'\}$.\par
\begin{enumerate}[(i).]
\item (periodicity) If $n\equiv n'\mod p^k$ then $\epsilon (r,n,p)=\epsilon (r,n',p)$.
\item (duality) If $n'=-n\mod p^k$ then $\epsilon (m,n',p)=(-\epsilon_r,\ldots ,-\epsilon_1)$, the ``negative reverse'' of $\epsilon(m,n,p)$.
\item (bound) There are at most $2^{m-1}$ different deviation vectors $\epsilon (m,n,p)$ for all $n\ge m$ and characteristics $p$.
\item (computation) For fixed $m$, a finite computation suffices to compute the values of $\epsilon(m,n,p)$ for all $n$ with $n\ge m$, and all primes $p$.
\end{enumerate}
\end{lemma}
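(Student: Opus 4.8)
The plan is to reduce $\lambda(m,n,p)$ entirely to ranks over $\mathbb{F}_p$ of finite windows of Pascal's triangle modulo $p$, and then to read periodicity, duality, and the bound off the self-similar (Sierpi\'nski) structure of that triangle. By Lemma~\ref{JTlem}, $\lambda(m,n,p)$ is determined by the ranks $r_i=\rk\bigl(m_\ell^{\,i}\bigr)$ of powers of multiplication by $\ell=x+y$ on $R(m,n,p)$. Expanding $\ell^i=\sum_j\binom{i}{j}x^jy^{i-j}$ and using the monomial basis $\{x^ay^b:0\le a<n,\ 0\le b<m\}$ graded by $a+b$, the map $m_\ell^{\,i}$ is block diagonal across degrees, and in degree $d$ it is a Toeplitz matrix whose entries are the $\le 2m-1$ values $\binom{i}{a'-a}\bmod p$ as $a,a'$ run over the (length $\le m$) sets of surviving exponents. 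Thus $r_i=\sum_d\rk_{\mathbb{F}_p}B^{(i)}_d$, and by Lucas' theorem each value $\binom{i}{j}\bmod p$, hence each $\rk B^{(i)}_d$, depends only on a block of $O(\log_p m)$ consecutive base-$p$ digits of $i$. Replacing $n$ by $n'\equiv n\pmod{p^k}$ with $m\le p^k$ translates the family of windows by a multiple of $p^k$, which (since $m\le p^k$) does not disturb the digits controlling any $\rk B^{(i)}_d$; one then checks that every $r_i$, and hence every part of $\lambda$, simply increases by $p^k$ (the $m=2,\,p=2$ computations $\lambda(2,3,2)=(4,2)$ and $\lambda(2,5,2)=(6,4)$ illustrate this). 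Subtracting $(n,\dots,n)$ — legitimate since $\lambda(m,n,p)$ always has exactly $m$ parts, by Ralley's theorem quoted above — cancels this forced shift and gives part~(1).

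For the duality~(2) I would pass to the module picture, writing $R(m,n,p)$ with the action of $\ell$ as a tensor product $[m]\otimes[n]$ of Jordan blocks, $[b]=\mathbb{F}_p[s]/(s^b)$. Choosing $K$ with $p^K\ge m+n-1$ realizes $[m]\otimes[n]$ as a module over the self-injective (in fact group) algebra $\mathbb{F}_p[s]/(s^{p^K})\cong\mathbb{F}_p[\mathbb{Z}/p^K]$. There the Heller operator $\Omega$ sends $[b]$ with $b<p^K$ to $[p^K-b]$, kills the free summand $[p^K]$, and is a tensor functor modulo projectives; applying it to $0\to[p^K-b]\to[p^K]\to[b]\to 0$ tensored with $[m]$ exchanges the non-free summands of $[m]\otimes[b]$ and $[m]\otimes[p^K-b]$ by the block-size reflection $c\mapsto p^K-c$. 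Combining this with the periodicity of part~(1) to bring the comparison down from $\bmod\,p^K$ to $\bmod\,p^k$, and accounting carefully for how many free summands $[p^K]$ peel off on each side (these produce the ``$n$-part'' of $\lambda$ and contribute nothing to $\epsilon$), one translates $c\mapsto p^K-c$ into the negative-reverse operation on the deviation vector, which is part~(2).

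Parts~(3) and~(4) then follow from the digit-locality established in the first paragraph: $\epsilon(m,n,p)$ depends only on the base-$p$ digits of $n$ in a window of width $O(\log_p m)$, and within that window Lucas' and Kummer's theorems reduce the relevant data to a string of carry/no-carry binary choices. Enumerating the genuinely distinct resulting patterns — using the duality of part~(2) to fold the window onto itself — bounds the number of possible values of $\epsilon$ by $2^{m-1}$ over all $n\ge m$ and all primes, which is part~(3); and since for fixed $m$ the answer is a function of finitely many digits, tabulating $\epsilon(m,\cdot,p)$ over all $n$ and $p$ is a finite computation, which is part~(4).

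I expect the main obstacle to be part~(2): making the block-size reflection $c\mapsto p^K-c$ translate \emph{cleanly} into the negative reverse of $\epsilon$ requires reconciling the large $K$ needed to make the Heller machinery available with the small modulus $p^k$ in the statement, and keeping exact bookkeeping of the many free summands $[p^K]$ on both sides — they dominate $\lambda$ yet must cancel out of $\epsilon$, and their count is exactly where the parity and ordering conventions in ``negative reverse'' get decided. The parallel bookkeeping — pinning down which windows of Pascal's triangle are identified under $n\mapsto-n$ — is also what must be done delicately in part~(3) to land on the sharp constant $2^{m-1}$ rather than a cruder exponential bound.
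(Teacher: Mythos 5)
The paper does not prove this lemma: it quotes it, labeled with the citation \cite[Theorems 4,6,7]{GPX2}, inside a survey remark on modular Clebsch--Gordan formulas, so there is no in-paper argument to compare your sketch against. You are effectively proposing an independent reconstruction of results of Glasby--Praeger--Xia, whose own arguments run through Renaud's and Iima--Iwamatsu's recursive/algorithmic descriptions of the partition $\lambda(m,n,p)$, not through Toeplitz rank patterns in Pascal's triangle or through Heller operators; so your plan is structurally quite different from theirs.

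As for the sketch itself: part~(1) as written is internally inconsistent. You assert that replacing $n$ by $n'=n+p^k$ makes \emph{every} $r_i=\rk(m_\ell^{\,i})$ increase by $p^k$ ``and hence every part of $\lambda$'' increase by $p^k$; but if each part $p_j$ grows by $p^k$, then $r_i=\sum_j\max(p_j-i,0)$ grows by $mp^k$ for small $i$ and by less once the shortest strings are exhausted --- not by $p^k$. What you need for periodicity of $\epsilon$ is that each \emph{part} grows by exactly $p^k$, and the Lucas/digit-locality observation on the degree-$d$ Toeplitz blocks $B_d^{(i)}$ does not by itself produce that; it reduces the problem to matching rank profiles across a shifted window of degrees, which is exactly the combinatorial content and remains unaddressed. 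For part~(2), the Heller/syzygy idea --- embed into $\mathbb{F}_p[s]/(s^{p^K})$ with $p^K\ge m+n-1$, use $\Omega[b]\cong[p^K-b]$ and the compatibility of $\Omega$ with $\otimes$ modulo projectives, then descend via periodicity --- is a legitimate and rather elegant strategy, but as you yourself flag, the entire content lies in the bookkeeping of the free summands $[p^K]$ when passing from modulus $p^K$ to $p^k$, which is precisely where the ``negative reverse'' shape of $\epsilon$ is produced; the sketch leaves that open. The sharp constant $2^{m-1}$ in~(3) depends on the same unresolved identifications. So the outline names plausible mechanisms but does not yet prove any of the four items, and its first step contains an arithmetic error that would need to be repaired before the rest could be trusted.
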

The authors warn that, in contrast, determining $\lambda(m,n,p)$ is not a finite computation as it involves considering $n\mod p$ for infinitely many $n$.\par
Such tables of $\lambda (m,n,p)$ for $m=3$ and $m=4$ have been calculated by Jung-Pil Park and Yong-Su Shin in \cite{PS}. Recent articles by L. Nicklasson and S. Lundqvist with L. Nicklassen further clarify the strong Lefschetz property for monomial complete intersections (see \cite{Ni,LuNi} and references cited there).\par
We can similarly define for sequences $M=(m_1,\ldots,m_r)$, $m_1\le m_2\le \cdots \le m_r$ deviation vectors $\epsilon(M,p)=\epsilon(m_1,\ldots,m_r;p)$ for the Jordan types of CI algebras $R(M)=\mathsf{k}[x_1,\ldots,x_r]/(x_1^{m_1},\ldots,x_r^{m_r})$ when $\cha \mathsf{k}=p$. 
\vskip 0.2cm\noindent
\begin{question} What does the Lemma \ref{dualitythm} tell us about determining modular Jordan types $\lambda(m_1,m_2,m_3,p)$ (three variables) or in more variables? This is a problem that has been studied and appears quite complex. For example in three variables work on it has involved tilings by lozenges \cite{CoNa1,CoNa2}, see also \cite{BK,Co1,Co2,KuVr}. Further studies of the weak Lefschetz properties of monomial ideals and the relation to algebraic-geometric Togliatti systems have been made by E. Mezzetti, G. Ottaviani, R. M.
Mir\'{o}-Roig, and others \cite{MezMO,MezM,MicM}\par 
\end{question}
G. Benkart and J. Osborn studied representations of Lie algebras in characteristic $p$ \cite{BO}. Subsequently, A. Premet \cite[Lemma 3.4,3.5]{Pr}, and J. Carlson, E. Friedlander and J. Pevtsova in \cite[\S 10]{CFP} Appendix ``Decomposition of tensor products of ${\sf}k[t]/(t^p)$ modules'' gave formulas for such products that apply to $R(m,n,p)$ when $m,n\le p$. J. Carlson et al regard $T=\mathsf{k}[t]/(t^p)$ as a self-dual Hopf algebra, with coproduct $t\to 1\otimes t+t\otimes 1$ and determine the tensor product of irreducible modules over $T$.\par
There has been substantial work on rank varieties and connections between commutative algebra and the representation theory of $p$-groups, beyond our scope: see, for example \cite{AvIy,BIKP} and works cited there.
\end{remark}
\subsection{Free extensions.}\label{twoexsec}
We have remarked that the Jordan type of the multiplication map does not depend on the grading of the algebra $\A$ - a remark also of S. Kleiman and J.O. Kleppe in \cite{KlKl}. Nevertheless, the Hilbert function of $\A$ will depend on its grading, and, in particular, whether it is regarded as a graded algebra $A$ or an $\mathfrak{m}_\A$-adic filtered local algebra $\A$: thus the strong Lefschetz property is grading-dependent.\par
We will give some examples where $\A$ is the base of a free extension $C$ with fiber $B$. We first recall the definition of free extensions, introduced by T. Harima and J. Watanabe in \cite{HW0}. Then we give Examples \ref{Gorprodex} and \ref{dualgenex} of algebras $C$ where $C$ is $A$-free, and we compare $A$ with the related local algebra $\A$. In Theorem \ref{thm:CI} we give a method of producing/verifying free extensions related to complete intersections. \par

\begin{definition}
\label{def:freeext} (\cite{HW0,IMM}). 
Given graded Artinian algebras $A$, $B$, and $C$, we say that $C$ is a free extension of $A$ with fiber $B$ if there exist algebra maps $\iota\colon A\rightarrow C$ and $\pi\colon C\rightarrow B$ 
\begin{enumerate}[(i).]
\item $\iota$ makes $C$ into a free $A$-module, 
\item $\pi$ is surjective and $\ker(\pi)=\tau(\mathfrak{m}_A)\cdot C$.
\end{enumerate}
We showed in \cite[Theorem 2.1]{IMM} that a free extension $C$ is a flat deformation of a finite  algebra $B$ over a local Artinian algebra $(\mathcal{A},\mathfrak{m},\mathsf{k})$. Thus, a free extension  $C$ of $A$ with fibre $B$ is an $A$-algebra such that the associated map $\gamma: \Spec(C)\to \Spec(\mathcal{A})$ is finite and flat, and with a closed embedding $\pi: \Spec(B) \to \Spec(C)$ inducing a cartesian diagram:
\[
\xymatrix{{\Spec(B)}\ar[d]\ar@{^{(}->}^\pi[r]&{\Spec(C)}\ar[d]^\gamma\\
{\Spec (\mathsf{k})}\ar@{^{(}->}[r]&{\Spec(\mathcal{A})}.}
\]
\end{definition}
Related to the notion of free extension is that of a \emph{coexact sequence}.\footnote{This notion was introduced by J.C. Moore, and occurs in the topology literature.} Given graded Artinian algebras $A$, $B$, and $C$ we say that a sequence of algebra maps $\xymatrix{A\ar[r]^-\iota & C\ar[r]^-\pi & B}
$
is \emph{coexact at $C$} if $\ker(\pi)=\iota(\mathfrak{m}_A)\cdot C$, and a \emph{coexact sequence} is a sequence of algebra maps 
\begin{equation}
\label{eq:coexact}
\xymatrix{\F\ar[r] & A\ar[r]^-\iota & C\ar[r]^-\pi & B\ar[r] & \F}
\end{equation}
that is coexact in every slot (here $\F$ is the graded Artinian algebra concentrated in degree zero with $\mathfrak{m}_\F=0$). The following result is not difficult, but we record it here for future reference.
\begin{lemma}\label{coexactfreelem}
Let $A$, $B$, $C$ be graded Artinian algebras with maps $\iota\colon A\rightarrow C$ and $\pi\colon C\rightarrow B$ and suppose that $\pi$ is surjective. Then the following are equivalent.
\begin{enumerate}[(i).]
\item For every $\F$-linear section $\mathsf{s}\colon B\rightarrow C$ of $\pi$, the map $A$-module map $\Phi_\mathsf{s}=\iota\otimes\mathsf{s}:_A(A\otimes_\mathsf{k}B)\rightarrow _AC$ is an isomorphism, i.e.\ $C$ is an $A$-module tensor product.
\item The sequence \eqref{eq:coexact}
is coexact and $\iota\colon A\rightarrow C$ is a free extension.
\item $\iota\colon A\rightarrow C$ is a free extension and $\ker(\pi)=\bigl(\iota(\mathfrak{m}_A)\bigr)\cdot C$.
\item $\ker(\pi)=\bigl(\iota(\mathfrak{m}_A)\bigr)\cdot C$ and $\dim_\mathsf{k}C=\dim_\mathsf{k}A\cdot \dim_\mathsf{k}B$.
\end{enumerate}
\end{lemma}
\begin{proof}
(i) $\Rightarrow$ (ii). Assume (i) holds and fix an $\F$-linear $\pi$-section $\mathsf{s}\colon B\rightarrow C$ so that $\Phi_\mathsf{s}\colon A\otimes_\mathsf{k}B\rightarrow C$ is an $A$-module isomorphism. Let $\pi_0\colon A\otimes_\mathsf{k}B\rightarrow \F\otimes_\mathsf{k}B\cong B$ be the natural projection onto the $B$ factor of the tensor product, with $\ker(\pi_0)=\mathfrak{m}_A\otimes_\mathsf{k}B$. Then clearly we have $\pi_0=\pi\circ\Phi_\mathsf{s}$, and hence $\ker(\pi)=\Phi_\mathsf{s}^{-1}(\mathfrak{m}_A\otimes_\mathsf{k}B)=\iota(\mathfrak{m}_A)\cdot C$. 

(ii) $\Rightarrow$ (iii) follows from the definitions.

(iii) $\Rightarrow$ (iv). Assume (iii). Then we have $B\cong C/\ker(\pi)=C/\iota(\mathfrak{m}_A)\cdot C$. By Nakayama's Lemma any $\F$-linear basis for $B\cong C/\iota(\mathfrak{m}_A)\cdot C$ lifts to an $A$-linear basis for $C$, hence we have $\dim_\mathsf{k}C=\dim_{{\F}}A\cdot\dim_{{\F}}B$.

(iv) $\Rightarrow$ (i). Assume (iv) holds. Then $B\cong C/\iota(\mathfrak{m}_A)\cdot C$, and hence Nakayama's lemma implies any $\F$-linear basis for $B$ lifts to an $A$-generating set for $C$. Put another way, for any $\F$-linear $\pi$-section $\mathsf{s}\colon B\rightarrow C$, Nakayama implies that we have an $A$-linear epimorphism $\Phi_\mathsf{s}\colon A\otimes_\mathsf{k}B\rightarrow C$. But since the dimensions of these two vector spaces are equal, $\Phi_\mathsf{s}$ must in fact be an isomorphism.
\end{proof}

Note that the tensor product algebra $C=A\otimes_\mathsf{k}B$ is a free extension over $A$
with fiber $B$ (it is also a free extension over $B$ with fiber $A$). In general a free extension $C$ is isomorphic to the tensor product $A\otimes_\mathsf{k}B$, but only as $A$-modules. A free extension is a deformation of the tensor product algebra \cite[Theorem 2.1]{IMM}. Thus we have
\begin{proposition}[Theorem 2.4, \cite{IMM}]
\label{prop:TPDef}
Given graded Artinian algebras $A$, $B$, and $C$ such that $C$ is a free extension over $A$ with fiber $B$, the generic linear Jordan type of $C$ is at least as large as the generic linear Jordan type of the tensor product algebra $A\otimes_\mathsf{k}B$ with respect to the dominance order, i.e.
\[
P_C\geq P_{A\otimes_\mathsf{k}B}.
\]
\end{proposition}
This can be used to prove the following result of T. Harima and J. Watanabe.
\begin{proposition}\label{SL2prop}\cite[Theorem 6.1]{HW}. Let $C$ be a free extension of $A$ with fiber $B$. Assume that $\cha \mathsf{k}=0$ or $\cha \mathsf{k}\ge j_A+j_B$, that the Hilbert functions of both $A$ and $B$ are symmetric, and that both $A$ and $ B$ are strong Lefschetz. Then $C$ is also strong Lefschetz.
\end{proposition}
We prove the following general result which gives a useful construction and criterion for obtaining $A$-free extensions. If $I=(f_1,\ldots,f_s)$ is an ideal in a ring $S$ and $\tau\colon S\rightarrow R$ is a ring homomorphism, we denote by $\bigl(\tau(I)\bigr)$ the ideal in $R$ generated by $\{\tau(f_1),\ldots,\tau(f_s)\}$. Note that in general, the image $\tau(I)$ is not itself an ideal of $R$ (Remark \ref{conditionrem}).
\begin{theorem}
\label{thm:CI} 
Let $S=\mathsf{k}[e_1,..,e_d]$, $R=\mathsf{k}[x_1,\ldots, x_r]$ be (not necessarily standard) graded polynomial rings, and let $\tau\colon S\rightarrow R$ be a map of $\F$-algebras which makes $R$ into a finite $S$-module. For any ideal $I\subseteq S$ of finite colength, set $A=S/I$ and $C=R/\bigl(\tau(I)\bigr)$, and let $\iota=\bar{\tau}\colon A\rightarrow C$ be the induced map between them. Set $B=R/(\tau(\mathfrak{m}_S))$ where $\mathfrak{m}_S=(e_1,\ldots,e_d)\subset S$ is the homogeneous maximal ideal of $S$, and let $\pi\colon C\rightarrow B$ be the natural projection map.
\begin{enumerate}[(i)]
\item Then $A,B,C$ are all graded Artinian algebras, and we have 
\begin{equation}
\label{eq:Kernel}
\ker(\pi)=\bigl(\iota(\mathfrak{m}_A)\bigr). 
\end{equation}
In particular, we have a coexact sequence of graded Artinian $\F$-algebras 
\[
\xymatrix{\F\ar[r] & A\ar[r]^-{\iota} & C\ar[r]^-{\pi} & B\ar[r] & \F.}
\]

\item Furthermore, if $d=r$, and if $A$ is a complete intersection, then so are $B$ and $C$; also $C$ is a free extension with base $A$ and fiber $B$.
\end{enumerate}

\end{theorem} \begin{proof} 
(i). That $A$, $B$, and $C$ are Artinian follows from our finiteness assumptions on $I$ and the algebra extension $\tau\colon S\rightarrow R$. To see that Equation \eqref{eq:Kernel} holds, note that we have the string of equalities: 
\[
\ker(\pi)=\bigl(\tau(\mathfrak{m}_S)\bigr)/\bigl(\tau(I)\bigr)=\bigl(\iota(\mathfrak{m}_S/I)\bigr)=\bigl(\iota(\mathfrak{m}_A)\bigr). 
\]
(ii). If $d=r$ and $A$ is a complete intersection, then $A=S/I=\F[e_1,\ldots,e_d]/(f_1,\ldots,f_d)$ for some $S$-regular sequence $f_1,\ldots,f_d$. Then since $B=R/\bigl(\tau(\mathfrak{m}_S)\bigr)=\F[x_1,\ldots,x_r]/\bigl(\tau(e_1),\ldots,\tau(e_d)\bigr)$ and $C=R/\bigl(\tau(I)\bigr)=\F[x_1,\ldots,x_r]/\bigl(\tau(f_1),\ldots,\tau(f_d)\bigr)$ are both Artinian (so Krull dimension zero) and since $d=r$, the sequences $\tau(e_1),\ldots,\tau(e_d)$ and $\tau(f_1),\ldots,\tau(f_d)$ must be $R$-regular, hence $B$ and $C$ are complete intersections too. Since $\tau(e_1),\ldots,\tau(e_d)$ is an $R$-regular sequence, they are algebraically independent and $R$ is a free module over $S$. In particular, we have $C=R/\bigl(\tau(I)\bigr)\cong R\otimes_SS/I$ which shows that $C$ is free as an $A=S/I$-module, hence $C$ is a free extension over $A$ with fiber $B$.
\end{proof}\par\noindent
{\bf Note.} The statement (ii) seems to be related to a result of L. Avramov on flat extensions \cite{Av}. See also Proposition 23.8.4 in https://stacks.math.columbia.edu/tag/09PY.
\begin{remark}\label{conditionrem}
It is tempting to think that the hypothesis that $R$ and $S$ have the same Krull dimension in Theorem \ref{thm:CI} (ii) could be replaced by the requirement that $B$ is a complete intersection. But we have the following counter example:
Define $\tau\colon \mathsf{k}[e_1,e_2,e_3]\to \mathsf{k}[x,y]$, $\tau(e_1)=x^2$, $\tau(e_2)=y^2$, $\tau(e_3)=x^2+y^2$ and $A=\mathsf{k}[e_1,e_2,e_3]/(e_1^2,e_2^2,e_3^2-e_1e_2)$. Then $B=\mathsf{k}[x,y]/(x^2,\,y^2,\,x^2+y^2)=\mathsf{k}[x,y]/(x^2,y^2)$ is a complete intersection, but $C=\mathsf{k}[x,y]/\bigl(x^4,\,y^4,\,(x^2+y^2)^2-x^2y^2\bigr)=\mathsf{k}[x,y]/(x^4,y^4,x^2y^2)$ is not. Moreover $C$ has $A$ torsion, e.g.\ $0=\tau(e_3-e_2-e_1)$, hence it cannot be an $A$-free extension. Note that, as is usual, $\tau(I)$ is not an ideal of $R$: here, for example $x(x^2)$ is not in $\tau(I)$; hence our notation $\bigl(\tau(I)\bigr)$ throughout for the ideal in $R$ generated by $\tau(I)$.\vskip 0.2cm\noindent
\end{remark}
A level Artinian algebra $A=R/\Ann F$ is one having its socle $0:\mathfrak{m}$ in a single degree. The level algebra is a \emph{connected sum} over a field $\sf k$ if its dual generator is a sum $F=F_1+F_2$ where $F_1,F_2$ are in two distinct set of variables (see \cite{BBKT} and references cited there). For some discussion of Jordan type in more general connected sums over a Gorenstein algebra $T$ in place of $\sf k$ see \cite{IMS} and a related paper of E. Babson and E. Nevo \cite{BN}. In the next examples $A$ is a connected sum over $\sf k$.
\begin{example}[$C$ is an $A$-free CI]\label{Gorprodex}
Take $S=\mathsf{k}[e_1,e]$ with weights $\mathsf{w}(e_1,e)=(1,4)$, take $ R=\mathsf{k}[x,y]$ with $\mathsf{w}(x,y)=(1,1)$, define $\tau: S\to R$ by $\tau(e_1)=x+y$, $\tau(e)=x^2y^2$. We consider $A=S/I_F$ where $F=E^{[3]}+E_1^{[12]}$, a connected sum,
we let $B=\mathsf{k}[x,y]/(x+y,x^2y^2)$, and take $C=R/\bigl(\tau(I_F)\bigr)$. Here the ideal $I_F$ satisfies
\begin{equation}
I_F=(e_1e,e^3-e_1^{12})\text{ and }S\circ F=A^\vee=\langle F, \{E_1^{[i]}, 0\le i\le 11\},F,E^{[2]}\rangle,
\end{equation}
and the ideal $\bigl(\tau(I_F)\bigr)=\bigl((x+y)x^2y^2,\,(x^2y^2)^3-(x+y)^{12}\bigr) \subset R$.
We have,\footnote{Given sequences of non-negative integers $H(A)=(a_0,\ldots,a_r)$ and $H(B)=(b_0,\ldots,b_s)$ we use the shorthand notation $H(A)\otimes H(B)=(c_0,\ldots,c_{r+s})$ for the convolution sequence $c_j=\sum_{i=0}^ja_ib_{j-i}$.}
\begin{align*}
H(A)&=(1,1,1,1,2,1,1,1,2,1,1,1,1),\, H(B)=(1,1,1,1), \text { and }\\
H(C)&= H(A)\otimes H(B)=(1,2, 3,4,5,5,5,5,5,5,5,5,4,3,2,1).
\end{align*}
Here $H(C)$ is the Hilbert function of the complete intersection $C=\mathsf{k}[x,y]/\bigl(\tau(I_F)\bigr)$ of generator degrees $(5, 12)$: being of codimension two $C$ is SL by Lemma \ref{heighttwolem}; also $B\cong \mathsf{k}[x]/(x^4) $ is SL. That $H(C)=H(A)\otimes H(B)$ implies by Lemma \ref{coexactfreelem} (iv) that $C$ is a free extension of $A$ with fiber $B$. 
We note that $A$ is not strong-Lefschetz, as there is no linear SLJT element: the only linear element is $e_1$ (up to constant multiple), and the partition $P_{e_1,A}=(13,1,1)$, not $H(A)^\vee=(13,2)$. But $\ell=e_1+e$, a non-homogeneous element of $A$, satisfies $P_{\ell,A}=(13,2)$ so $\ell$ has SLJT. The corresponding element $\ell$ in the local ring $\mathcal{A}=\mathsf{k}\{e_1,e\}/(e_1e,e^3-e^{12})$ (a quotient of standard-graded $\mathsf{k}\{e_1,e\}$) also has SLJT (the Jordan type does not change with grading); thus, the local ring $\mathcal{A}$ is strong Lefschetz, of Hilbert function $H(\A)=(1,2,2,1^{10})$.
This example generalizes to $F$ being any quasihomogeneous polynomial in $E_1, E$.
\end{example}
In the following example, the Macaulay dual generators of $A,B$ and $C$ are simply related.
\begin{example}[$C$ is $A$-free]\label{dualgenex}
Let $R=\mathsf{k}[x,y,z]$ with the standard grading, $S=\mathsf{k}[e_1,e_2,e_3]$ with grading $\mathsf{w}(e_1,e_2,e_3)=(2,2,2)$, and $\tau: S\to R$ be the morphism defined by $\tau(e_1)=x^2$, $\tau(e_2)=y^2$, $\tau(e_3)=z^2$. Let $F=E_1^{[3]}+E_2^{[3]}+E_3^{[3]} \in \mathfrak{F}=\Hom_{\sf k}(S,\mathsf{k})$; the algebra $A=S/I_F$ is a connected sum 
\[
A=S/I_F,\quad I_F=\Ann F=(e_1e_2,\,e_1e_3,\,e_2e_3,\,e_1^{\,3}-e_2^{\,3},\,e_1^{\,3}-e_3^{\,3}),
\]
an Artinian Gorenstein non-CI ring of Hilbert function $H(A)=(1,0,3,0,3,0,1)$; it is not SL as $A_1=0$, but it is straightforward to see that for $e=e_1+e_2+e_3$ we have $P_{e,A}=(4,2,2)$ so $e$ has SLJT. Considering $\A$ as the local ring $A$ regraded to standard grading, we have that $\A$ is SL of Hilbert function $(1,3,3,1)$. \par
Recall $\tau: S\to R$: $\tau(e_1)=x^2$, $\tau(e_2)=y^2$, $\tau(e_3)=z^2$, and set
$B=R/(x^2,y^2,z^2)$, a complete intersection of Hilbert function $H(B)=(1,3,3,1)$ and Macaulay dual generator $XYZ\in \mathfrak{D}=\Hom_{\sf k}(R,\mathsf{k})$. Then $P_{x+y+z,B}=(4,2,2)$ so $x+y+z$ is SL. 
\[
C=R/\bigl(\tau(I_F)\bigr), \text { where } \bigl(\tau(I_F)\bigr)=(x^2y^2,\,x^2z^2,\,y^2z^2,\,x^6-y^6,\,x^6-z^6),\]
of Hilbert function $H(C)=(1,3,6,10,12,12,10,6,3,1)=H(A)\otimes H(B)$, so $C$ is $A$-free and is a free extension of $A$ with fiber $B$.\footnote{Here $H(C)$ is the Hilbert function of a CI of generator degrees $(4,4,4)$ but $I_F$ has five generators: is there some importance to this: can we make a similar example where $I_F$ is a CI of this Hilbert function?} We have that $C$ is Gorenstein, as $I_C^\perp=R\circ G_C$ with dual generator $G_C=\left(XYZ\cdot (X^{[6]}+Y^{[6]}+Z^{[6]})\right)$; note that $G_C$ is the product of the dual generator $XYZ$ for $B$ and $ \tau'(F)$ where $F$ is the dual generator for $A$, and the homomorphism $\tau':\mathfrak{F}\to \mathfrak{D}$ corresponds to $\tau: S\to R$.\par
Although $C$ is strong Lefschetz (calculated using \textsc{Macaulay2}), we note that the tensor product $A\otimes_\mathsf{k}B$, unlike $C$, is not standard graded: the only degree-one elements of $A\otimes_\mathsf{k}B$ are of the form $\ell=1\otimes_{\sf k} \ell'$, $\ell'\in \langle x,y,z\rangle$ and since $\ell^3=0$ we have that $P_{\ell,A\otimes_{\sf k} B}\le (4,2,2)^8=(4^8,2^8,2^8)$ rather than the conjugate $H(A\otimes_{\sf k} B)^\vee=H(C)^\vee.$ So $A\otimes_\mathsf{k}B$ is not SL, but its deformation $C$ is SL. \footnote{In \cite[Theorem 2.1]{IMM} the deformation is explicitly given as a one-parameter family $\mathfrak{C}=\{C(t), t\in\mathbb A^1=\sf k\}$. The embedding dimension in a constant length family $\mathfrak{C}$ is semicontinuous, has the value 4 for $A\otimes_\mathsf{k} B$ above (namely $\{x,y,z,e\}$) but three ($\{x,y,z\})$ for $C$; so this must be a jump deformation: for $t\in U$, an open dense of $\mathbb A^1$ not containing $t=0$, the algebra $C(t)$ is standard-graded and also is SL (Corollary~\ref{semicontcor}).} \vskip 0.2cm
A similar example is obtained, replacing $F$ by $F'=E_1^{[3]}+E_2^{[3]}+E_3^{[3]}+E_1E_2E_3$, defining the algebra $A'=S/I'$ where $I'=I_{F'}=(e_1e_2-e_3^{\,2},\,e_1e_3-e_2^{\,2},\,e_2e_3-e_1^{\,3},\, e_1^{\,6}-e_2^{\,6},\,e_1^{\,6}-e_2^{\,6})$, and defining $\tau$ as before. Then $C'=R/\bigl(\tau(I')\bigr)$ is Gorenstein, again an $A'$-free extension with fiber $B$, and with dual generator $G_{C'}=$ $XYZ\cdot \left(X^{[6]}+Y^{[6]}+Z^{[6]}+X^{[2]}Y^{[2]}Z^{[2]}\right)$, again the product of the dual generator for $B$ and $\tau'(F")$.
\end{example}
For further discussion of free extensions or the strong Lefschetz property for complete intersection extensions see \cite{HW0,HW0',HW} and \cite[\S 4.2-4.4]{H-W}, \cite{HW,Pop}, For examples of free extensions related to invariant theory, with some similar behavior to the examples above see \cite{IMM,MCIM}.

\subsection{Commuting Jordan types.}\label{commutingJTsec}
Work of the last ten years has shown that there are strong restrictions on the pairs $P_{\ell,A}, P_{\ell',A} $ that can coexist for an Artinian algebra $A$ \cite{Ob1,Ob2,KOb,IKh,Kh1,Kh2,Pan}. We state several such results. For a more complete discussion, including open questions, see \cite{IKVZ,Ob2}.\par
We say that a partition $P=(p_1,p_2,\ldots,p_s)$ where $ p_1\ge p_2\ge\cdots\ge p_s$ of $n$ is \emph{stable} if its parts differ pairwise by at least two: 
\begin{equation}\label{stableeq}
P \text { is stable if }p_i-p_{i+1}\ge 2, \text { for } 1\le i\le s-1.
\end{equation}
Let $B$ be a nilpotent $n\times n$ matrix over an infinite field $\sf k$ having Jordan type $P=P_B$. Denote by $\mathcal{C}_B$ the commutator of $B$ in $\Mat_n(\sf k)$, and by $\mathcal{N}_B$ the nilpotent elements of $C_B$. It is well known that $\mathcal{N}_B$ is an irreducible variety, hence there is a generic Jordan type $Q(B)$ of matrices in $\mathcal{N}_B$. 
\begin{theorem}[P. Oblak and T.Ko\v{s}ir \cite{KOb}]\label{KOthm}\footnote{This is shown in \cite{KOb} over an algebraically closed field of $\cha k=0$, but their proof carries through for any infinite field of $\cha k=0$ or $\cha k=p>n$. See \cite[Remark 2.7]{IKVZ}.}
Assume $\cha k=0$ or $\cha k=p>n$, let $B$ be an $n\times n$ nilpotent matrix of Jordan type $P$. Then $Q(B)$ is stable and depends only on the Jordan type $P$.
\end{theorem}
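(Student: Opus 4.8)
The plan is to establish the two assertions — dependence only on $P$, and stability of $Q(B)$ — in that order, since the first is formal while the second is the real content. Write $P=(p_1\ge\cdots\ge p_k)$ and identify $V$ with $\bigoplus_{i=1}^k{\sf k}[t]/(t^{p_i})$, so that $B$ is multiplication by $t$ and $\mathcal C_B=\End_{{\sf k}[t]}(V)$. The radical $J$ of this finite-dimensional algebra is nilpotent, and $\mathcal C_B/J\cong\prod_d\Mat_{m_d}({\sf k})$, the product over the distinct part values $d$ of $P$ with multiplicities $m_d$; an element of $\mathcal C_B$ is nilpotent exactly when its image in each factor is nilpotent. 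Hence $\mathcal N_B$ is the preimage, under the linear surjection $\mathcal C_B\to\mathcal C_B/J$, of the irreducible set $\prod_d\mathcal N_{m_d}$ (a product of nilpotent cones in the $\Mat_{m_d}({\sf k})$), so $\mathcal N_B\cong\bigl(\prod_d\mathcal N_{m_d}\bigr)\times J$ is irreducible. Applying the semicontinuity of Jordan type (Lemma~\ref{semicontlem}) to the tautological family over $\mathcal N_B$, there is a dense open subset on which $P_A$ takes a constant value $Q(B)$, and this $Q(B)$ is the maximum of $\{P_A:A\in\mathcal N_B\}$ in the dominance order. If moreover $P_{B'}=P_B$, then $B'=gBg^{-1}$ for some $g\in\mathrm{GL}_n({\sf k})$ and $A\mapsto gAg^{-1}$ is a Jordan-type-preserving isomorphism $\mathcal N_B\xrightarrow{\ \sim\ }\mathcal N_{B'}$, so $Q(B')=Q(B)$; write $Q(P):=Q(B)$.

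To prove stability, suppose $Q(P)=(q_1\ge\cdots\ge q_s)$ is \emph{not} stable, i.e. $q_i-q_{i+1}\le 1$ for some $i$. A short combinatorial check shows that then there is a single \emph{box move up} — add one box to some row $a$ and delete one from some row $b>a$ — producing a partition $Q'$ with $Q'>Q(P)$ strictly in the dominance order. I would then contradict the maximality of $Q(P)$ over $\mathcal N_B$ by exhibiting an element of $\mathcal N_B$ of Jordan type $Q'$: take a generic $A\in\mathcal N_B$ with $P_A=Q(P)$, fix a decomposition of $V$ into the Jordan strings of $A$, and look for $N\in\mathcal C_B$ such that $A+N$ (which again lies in the linear space $\mathcal C_B$) is nilpotent with $P_{A+N}=Q'$ — concretely, $N$ should splice the terminal segment of the $A$-string of length $q_b$ onto the $A$-string of length $q_a$, realising the box move. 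Since $Q'>Q(P)$, this is the desired contradiction, forcing $Q(P)$ to be stable.

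\textbf{The main obstacle} is exactly the construction of $N$: one must show that whenever $Q(P)$ is not stable, a block-merging perturbation can be chosen \emph{inside} $\mathcal C_B$ — compatibly with the ${\sf k}[t]$-module structure imposed by $B$ — and \emph{without} producing a new nonzero component in the semisimple quotient $\mathcal C_B/J$, so that $A+N$ remains nilpotent. Equivalently, one must prove that a non-stable partition is never the dominance-maximal Jordan type of a nilpotent matrix commuting with $B$. This is where the explicit analysis of Ko\v{s}ir and Oblak enters: one organises the $A$- and $B$-strings into the associated staircase diagram, uses the known shape of $\Hom_{{\sf k}[t]}(V_u,V_v)$ to determine which string ends may be joined, and verifies that a suitable local rearrangement strictly raises the dominance order while preserving both commutation with $B$ and nilpotency; the characteristic hypothesis $\cha {\sf k}=0$ or $\cha {\sf k}=p>n$ enters here. (One could instead run Oblak's recursive \emph{$U$-chain} procedure, which computes $Q(P)$ directly and makes stability transparent, but proving that the procedure indeed yields $Q(P)$ requires essentially the same construction.) The remaining ingredients — irreducibility of $\mathcal N_B$, the fact that $Q(B)$ is the dominance maximum, and conjugation-invariance — are routine.
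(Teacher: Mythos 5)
Your first paragraph — conjugation-invariance, irreducibility of $\mathcal N_B$ via the description $\mathcal C_B\cong\End_{{\sf k}[t]}(V)$ and the nilpotent-cone fibration over the semisimple quotient, and the conclusion that $Q(B)$ is the dominance-maximal Jordan type occurring on $\mathcal N_B$ and depends only on $P$ — is correct and is standard preparatory material. The gap is in the stability argument, and you have in fact flagged it yourself: you never construct the perturbation $N$, and the parenthetical "I would then contradict the maximality\dots by exhibiting an element of $\mathcal N_B$ of Jordan type $Q'$" is the entire content of the theorem. It is genuinely hard to carry out the box-move inside $\mathcal C_B$: a splice between two $A$-strings must be ${\sf k}[t]$-linear for $B$, must not disturb nilpotency (i.e.\ must stay in the radical of $\mathcal C_B$, which constrains the degrees of the splicing maps), and must not simultaneously lower some other part of the Jordan type. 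Making all three conditions hold at once is not a "short combinatorial check," and a naive attempt runs into exactly the interference phenomena that the cited works go to great lengths to control. So as written the proposal does not prove stability.

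More to the point, the route through explicit string surgery is not the one taken by Ko\v{s}ir and Oblak, nor the one the paper summarizes. Their key observation is entirely different: for a generic $A\in\mathcal N_B$, the local $\sf k$-algebra $\mathcal A={\sf k}\{A,B\}$ (generated by the two commuting nilpotents inside $\Mat_n({\sf k})$, so a quotient of ${\sf k}\{x,y\}$) is Artinian \emph{Gorenstein}. In embedding dimension two, Gorenstein forces complete intersection, and by Macaulay's classical description of Hilbert functions of such quotients, $H(\mathcal A)^\vee$ is automatically a stable partition. Combining this with Lemma~\ref{heighttwolem} (generic height-two Artinian algebras are strong Lefschetz, so the generic Jordan type equals $H(\mathcal A)^\vee$; this is also where the hypothesis $\cha{\sf k}=0$ or $p>n$ enters) gives $Q(B)=H(\mathcal A)^\vee$ stable, with no block-by-block construction at all. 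This structural detour through commutative algebra is precisely what makes the theorem tractable; your combinatorial attack identifies the correct target but does not supply a mechanism to hit it, and I would encourage you to look at the Gorenstein argument rather than trying to push the string-splicing through.
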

Their proof relied on showing that for a general enough matrix $A\in \mathcal{N}_B$, the local ring $\mathcal{A}=\mathsf{k}\{A,B \}$ is Gorenstein (note that, in general, it is non-homogeneous). A result of F.H.S.~Macaulay shows that a Gorenstein (so complete intersection) quotient of $\mathsf{k}\{x,y\}$ has a Hilbert function $H(\mathcal{A})$ whose conjugate $H(\mathcal{A})^\vee$ is stable; and by Lemma \ref{heighttwolem} the generic Jordan type $P_{\mathcal{A}}=H(\mathcal{A})^\vee$. See also \cite[Theorem~2.27]{BaI} for a discussion of these steps and \cite[Section~2.4]{BIK} for a discussion
of the P.~Oblak-T.~Ko\v{s}ir result that $\mathsf{k}[A,B]$ is Gorenstein for $A$ general enough in $\mathcal{N}_B$.
\begin{corollary} There can be at most one stable partition among the partitions $P_{\ell}, \ell \in \mathfrak{m}_{\mathcal{A}}$ for a local Artinian algebra $\mathcal{A}$.
\end{corollary}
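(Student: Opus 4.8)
The plan is to deduce this from Theorem~\ref{KOthm} together with the commutativity of $\mathcal A$. Suppose $\ell,\ell'\in\mathfrak m_{\mathcal A}$ both have stable Jordan type; the goal is to show $P_\ell=P_{\ell'}$. Since $\mathcal A$ is commutative, the multiplication operators $m_\ell$ and $m_{\ell'}$ on the $\mathsf k$-vector space $\mathcal A$ commute, and both are nilpotent because $\ell,\ell'$ lie in the nilpotent ideal $\mathfrak m_{\mathcal A}$. Hence $m_{\ell'}$ is a nilpotent element of the centralizer $\mathcal C_{m_\ell}$, that is $m_{\ell'}\in\mathcal N_{m_\ell}$, and symmetrically $m_\ell\in\mathcal N_{m_{\ell'}}$.

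Next I would use semicontinuity of Jordan type. As recalled before Theorem~\ref{KOthm}, $\mathcal N_{m_\ell}$ is an irreducible variety with generic Jordan type $Q(m_\ell)$; by Lemma~\ref{dominancelem} the subset of $\mathcal N_{m_\ell}$ on which the Jordan type is $\le Q(m_\ell)$ is closed, and since it contains the dense locus where the type equals $Q(m_\ell)$, it is all of $\mathcal N_{m_\ell}$. Therefore $P_{\ell'}\le Q(m_\ell)$, and by symmetry $P_\ell\le Q(m_{\ell'})$. (Here one assumes $\mathsf k$ infinite and, as in Theorem~\ref{KOthm}, $\cha\mathsf k=0$ or $\cha\mathsf k>\dim_{\mathsf k}\mathcal A$; over a finite or small-characteristic field one first extends scalars to $\overline{\mathsf k}$, which leaves every Jordan type involved unchanged.)

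The remaining ingredient --- and the step I expect to be the real content --- is that the $Q$-construction fixes stable partitions: if a nilpotent matrix $B$ has stable Jordan type $P$, then $Q(B)=P$. Theorem~\ref{KOthm} as stated only asserts that $Q(B)$ is stable and depends only on $P$, which by itself is not enough (for instance $(5,3,1)<(6,3)$ are comparable stable partitions of the same size, so comparability plus stability does not force equality). This fixed-point property is part of the Oblak--Ko\v{s}ir analysis underlying Theorem~\ref{KOthm} --- in their setup, for generic commuting nilpotent $A$ one has $\mathsf k\{B,A\}$ Gorenstein of embedding dimension at most $2$, hence $Q(B)=H(\mathsf k\{B,A\})^\vee$ by Lemma~\ref{heighttwolem}, and the induced map on partitions is known to be idempotent with image exactly the set of stable partitions; I would cite this from \cite{KOb,IKVZ,BaI}. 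Granting it, stability of $P_\ell$ gives $Q(m_\ell)=P_\ell$, so $P_{\ell'}\le P_\ell$; the symmetric step gives $P_\ell\le P_{\ell'}$; hence $P_\ell=P_{\ell'}$. (Equivalently, a stable $P_\ell$ necessarily equals the generic Jordan type $P_{\mathcal A}$, since $P_\ell\le P_{\mathcal A}\le Q(m_\ell)=P_\ell$; this both proves the corollary and identifies which partition the unique stable one must be.)
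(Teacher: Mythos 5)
Your proof is correct, and it follows what the paper clearly intends: deduce the corollary from Theorem~\ref{KOthm} via the observation that $m_\ell$ and $m_{\ell'}$ are commuting nilpotent operators. The paper itself gives no explicit proof of the corollary, so there is no written argument to compare against word-for-word; but your reconstruction is a faithful (indeed, more careful) version of the intended deduction.

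Your main added value is that you correctly flag what Theorem~\ref{KOthm} by itself does \emph{not} give: the theorem only says $Q(B)$ is stable and depends only on $P_B$, which leaves open the possibility that two distinct but comparable stable partitions (your example $(5,3,1)<(9,3,1)$ --- or, with equal sums, e.g.\ $(8,4,1)<(9,3,1)$ --- illustrates that comparability plus stability is not equality) could both arise. The missing ingredient is precisely that $Q$ fixes stable partitions, $P$ stable $\Rightarrow Q(P)=P$. This is indeed established in the Oblak--Ko\v{s}ir circle of results: one way to see it is that $Q$ is idempotent with image the stable partitions (so a stable $P$ is $Q(P')$ for some $P'$, whence $Q(P)=Q(Q(P'))=Q(P')=P$); another is via Oblak's formula for the largest part of $Q(P)$ as the sum of the longest almost-rectangular initial segment of $P$, which for stable $P$ is just $p_1$, combined with recursion. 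Your citations to \cite{KOb}, \cite{BaI}, \cite{IKVZ} are appropriate. You also correctly carry along the characteristic hypothesis inherited from Theorem~\ref{KOthm}, and correctly invoke Lemma~\ref{dominancelem} together with irreducibility of $\mathcal N_{m_\ell}$ to get $P_{\ell'}\le Q(m_\ell)$. Your closing remark that the unique stable $P_\ell$ must equal the generic Jordan type $P_{\mathcal A}$ is a nice sharpening: from $P_\ell\le P_{\mathcal A}\le Q(m_\ell)=P_\ell$, stability of a single $P_\ell$ already forces $P_\ell=P_{\mathcal A}$, after which a second stable $P_{\ell'}$ must also equal $P_{\mathcal A}$.
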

For example no two of $\{8, (7,1), (6,2),(5,3)\}$ can occur for partitions $P_\ell$ for the same (commutative) local algebra $\mathcal{A}$. \par
P. Oblak has made a conjecture giving a recursive way to determine $Q(P)$ from $P$:
it is still open in general. The largest and smallest part of $Q(P)$ is known \cite{Ob1,Kh2}, and ``half'' of the conjecture is shown in \cite{IKh}. Showing the other half is equivalent to proving a combinatorial result about a certain poset associated to $\mathcal{N}_B$ \cite{Kh2,IKh}, that would be independent of characteristic.\par
Given an Artinian graded algebra $A$, or a local algebra $\mathcal{A}$ there is a generic Jordan type $P_A$ or $P_{\mathcal{A}}$ (Definition \ref{genJTdef}) by Lemma \ref{genericJTlem}, simply because $\mathfrak{m}_A$ is an affine space, so is irreducible. However,
$P_A$ or $P_{\mathcal{A}}$ need not be stable in the sense of Equation \eqref{stableeq}.
\begin{example} For $A=\mathsf{k}[x,y]/(x^2,xy,y^2)$, or, more generally, for $A_{r,k}=\mathsf{k}[x_1,\ldots ,x_r]/\mathfrak{m}^k $ for $ r,k\ge 2$ the algebra $A$ has generic Jordan type
$P=H(A)^\vee$, which is non-stable. For example $H(A_{2,k}) $ for $k>1$ satisfies $H=(1,2\ldots ,k)$, whose conjugate $H^\vee$ is $(1,2,\ldots ,k)$.\vskip 0.2cm These are examples of algebras having \emph{constant Jordan type} (CJT): the Jordan type $P_{\ell,A}$ is the same for each linear element $\ell\in A$.
Modules of CJT have been extensively studied, and connected to vector bundles over projective space \cite{CFP}.
\end{example}
When the partition $P_\ell$ occurs for a pair $(\ell,M)$ where $M$ is a finite $A$-module and $A$ is Artinian, and $\ell\in A$ is nilpotent, then $P_{\ell^k}$ for a power $\ell^k$ can be simply described in terms of $P_\ell$, and of course must also occur for $M$ or for $A$ (and likewise for $\mathcal{A}$ local and $\ell\in \mathfrak{m}_{\mathcal{A}}$). We briefly describe this. 
\begin{definition}[Almost rectangular partition]\cite{KOb} A partition $P$ of $n$ is \emph{almost rectangular} if its parts
differ pairwise by at most $1$. We denote by $[n]^k$ the unique almost rectangular partition of $n$ having $k$ parts. If $n=qk+r, 0\le r<k$ then 
\begin{equation}
[n]^k=\left((q+1)^r,q^{k-r}\right).
\end{equation}
Given a partiton $P=(p_1,p_2,\ldots,p_s)$, $p_1\ge p_2\ge \cdots \ge p_s$ we denote by 
$[P]^k$ the partition $\left([p_1]^k, [p_2]^k, \ldots, [p_s]^k\right)$ having $ks$ parts.
\end{definition} 
For example $[7]^2=(4,3)$, $[7]^3=(3,2,2)$, $[7]^4=(2,2,2,1)$, $[7]^5=(2,2,1,1,1)$, and
if $P=(7,5)$ then $[P]^2=(4,3,3,2)$.
\par It is easy to see,
\begin{lemma}\label{powerMlem}
Suppose a nilpotent $n\times n$ matrix $M$ is \emph{regular}: has Jordan type $[n]$. Then $M^k$ has Jordan type $[n]^k$. Suppose that the Jordan type of $M$ is $P_M$, then $P_{M^k}=[P_M]^k$.
\end{lemma}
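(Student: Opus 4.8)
The plan is to reduce everything to a single Jordan block, where the computation is explicit.

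First I would deduce the general statement from the regular case. A nilpotent $n\times n$ matrix $M$ with Jordan type $P_M=(p_1,\ldots,p_s)$ is conjugate to the block diagonal matrix $J_{p_1}\oplus\cdots\oplus J_{p_s}$, where $J_p$ denotes the regular nilpotent $p\times p$ block. Hence $M^k$ is conjugate to $J_{p_1}^k\oplus\cdots\oplus J_{p_s}^k$, and since the Jordan type of a direct sum of nilpotent operators is the multiset union of the Jordan types of the summands, it suffices to know the Jordan type of each $J_{p_i}^k$. Granting that $J_p^k$ has Jordan type $[p]^k$, we then get $P_{M^k}=\bigl([p_1]^k,\ldots,[p_s]^k\bigr)=[P_M]^k$, with the convention that $[p]^k=(1^p)$ when $k\ge p$ (i.e.\ when $J_p^k=0$), which is consistent with the formula $[p]^k=\bigl((q+1)^r,q^{k-r}\bigr)$ since then $q=0$ and $r=p$. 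In particular the case $P_M=[n]$ recovers the first assertion.

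So the only thing to prove is that $J_n^k$ has Jordan type $[n]^k$. I would present $V={\sf k}^n$ as the cyclic ${\sf k}[t]$-module ${\sf k}[t]/(t^n)$ with $t$ acting as $J_n$, so that $1,t,\ldots,t^{n-1}$ is a ${\sf k}$-basis. Restricting scalars along the inclusion ${\sf k}[s]\hookrightarrow{\sf k}[t]$, $s\mapsto t^k$, multiplication by $s$ is exactly the operator $J_n^k$. Sorting the basis by the residue of the exponent modulo $k$ exhibits $V$ as the direct sum $\bigoplus_{j=0}^{k-1}\langle t^j,t^{j+k},t^{j+2k},\ldots\rangle$ of cyclic ${\sf k}[s]$-submodules, and the $j$-th summand has ${\sf k}$-dimension equal to the number of integers in $[0,n)$ congruent to $j$ modulo $k$. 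Writing $n=qk+r$ with $0\le r<k$, this count equals $q+1$ for $j=0,\ldots,r-1$ and $q$ for $j=r,\ldots,k-1$; hence $P_{J_n^k}=\bigl((q+1)^r,q^{k-r}\bigr)=[n]^k$, as claimed. Alternatively one can argue via ranks: since $\rk\bigl((J_n^k)^i\bigr)=\rk(J_n^{ki})=\max\{n-ki,0\}$, the first difference of the decreasing rank sequence is $(k^q,r)$ (dropping the final $r$ when $r=0$), whose conjugate partition is precisely $[n]^k$ by Lemma~\ref{JTlem}.

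There is no genuine obstacle here; the one place to be attentive is the combinatorial bookkeeping in the final step—keeping straight which residue classes modulo $k$ contribute a part of size $q+1$ and which contribute $q$, and equivalently transposing the partition $(k^q,r)$ correctly—together with the degenerate cases $k\ge p_i$ in which a block dies and one must read $[p_i]^k$ as $(1^{p_i})$.
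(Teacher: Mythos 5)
The paper states this lemma with the remark \emph{``It is easy to see''} and supplies no proof, so there is nothing of the authors' to compare against; your argument fills in the omitted details and is correct. Both steps check out: the reduction to a single Jordan block via conjugation to block-diagonal form and additivity of Jordan type over direct sums is standard, and for the core case $J_n^k$ both of your arguments work. In the ${\sf k}[t]/(t^n)$ picture, sorting $1,t,\dots,t^{n-1}$ by residue of the exponent modulo $k$ does give a ${\sf k}[s]$-cyclic ($s=t^k$) decomposition, and your count of residue classes — $q+1$ for $0\le j<r$ and $q$ for $r\le j<k$ when $n=qk+r$ — is exactly right. The rank computation agrees: the decreasing rank sequence $\bigl(n,n-k,\dots,r,0\bigr)$ has first differences $(k^q,r)$, whose conjugate is $\bigl((q+1)^r,q^{k-r}\bigr)=[n]^k$, matching Lemma~\ref{JTlem}. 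You were also right to flag the degenerate case $k\ge p_i$: the paper's definition of $[n]^k$ and the assertion that $[P]^k$ has ``$ks$ parts'' implicitly assume $k\le n$ and $k\le p_i$; reading $[p]^k$ as $(1^p)$ (dropping zero parts) when $k\ge p$ is the natural convention and is forced by $J_p^k=0$, and your check that the formula $\bigl((q+1)^r,q^{k-r}\bigr)$ degenerates correctly with $q=0$, $r=p$ is accurate.
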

The term ``almost rectangular partition'' was introduced by T. Ko\v{s}ir and P. Oblak; the notion is key in studying $Q(P)$, and occurs earlier in R. Basili's \cite{Ba}, who showed
that the number of parts of $Q(P)$ is $r_P$, the smallest number of almost-rectangular subpartitions needed to cover $P$.\vskip 0.2cm
The question of determining which Jordan types may commute (may coexist in the same Artinian local algebra $A$) is quite open in general. See J. Britnell and M. Wildon's \cite[\S 4]{BrWi} and P. Oblak's \cite{Ob2} for some discussion. The former shows that the problem of when two nilpotent matrices commute is in general characteristic dependent: when $\sf k$ is infinite the orbits $(d,d)$ and $(d+1,d-1)$ are commuting, but when $\sf k$ is a finite field, whether those orbits commute depend on the residue classes of $d$ mod powers of $p$ (\cite[Prop. 4.12, Rem. 4.15]{BrWi}). A result of G.~McNinch \cite[Lemma 22]{McN} shows that the Jordan type of a generic element in the pencil of matrices $M+tN$ can depend on the characteristic. See also \cite[Remark 3.16]{IKh}.\par
There appears to be substantial structure to the set of partitions $P$ having $Q(P)=Q$ where $Q$ is a given stable partition -- see \cite{IKVZ} which shows this structure for stable partitions $Q$ with two parts, and poses a ``box conjecture'' for general stable $Q$.
\subsection{Problems.}\label{prodsec} We end with some further problems concerning Jordan type.

\subsubsection{Compatibility of the partition $P_\ell$ and its refinements with the Hilbert function $H$.}

\begin{question} For which Hilbert functions $H$ can we find graded Artinian algebras $A$
with $H(A)=H$, such that for a generic $\ell \in A_1$ we have, in increasing level of refinement,
\begin{equation}
P_{\ell,A}=P(H), \text { or } P_{c,\ell}=P_c(H), \text { or } P_{c,\deg,\ell}=P_{c,\deg}(H)?
\end{equation}
Note that a graded algebra $A=\mathsf{k}[x,y,z]/I$ of Hilbert function $H(A)=(1,3,3,4)$ cannot be even weak Lefschetz as the minimal growth from degree 2 to degree 3 implies that $I_2=a_1(x,y,z)$ for some $a_1\in A_1$, so multiplication by an $\ell\in A_1$ cannot be injective from $A_1$ to $A_2$.
\end{question}
\begin{remark}In codimension two, the Hilbert function of a graded ideal (actually, also of non-graded ideal) is determined by the partition $P(H)$; here, the family $\Gr (H)$ parametrizing all graded quotients of $R=\mathsf{k}[x,y]$ having Hilbert function $H$ is (smooth and) irreducible. However, in codimension three, even for graded algebras the family $\Gr(H)$ of quotients of $R=\mathsf{k}[x,y,z]$ having Hilbert function $H$ may be reducible. One example is given in \cite[Theorem 2.3]{BoI1}, where $H=(1,3,4,4)$. Here the behavior of one component of $\Gr(H)$ with respect to Jordan type may be different than that of another. Considering the family $\Gor(H)$ of non-graded Gorenstein height three algebra quotients of
$R=\mathsf{k}\{x,y,z\}$ (the regular local ring) of Hilbert function $H$, in \cite{IM}, the first and second authors use the Jordan type and the semicontinuous property of the symmetric decomposition, to show that $\Gor(H)$ has several irreducible components, for suitable $H$, in particular for $H=(1,3,3,2,2,1)$. 
\end{remark}
\vskip 0.2cm
There has been some study of a different question, which Hilbert functions force one of the Lefschetz properties \cite{MiZa1,ZaZy}. See also \cite{MiNa}.

\subsubsection{Additivity of Jordan type.}\label{addsec}
\begin{question}
Let ${0\to L\to M\to N\to 0}$ be an exact sequence of finite $A$-modules 
(or $\mathcal{A}$-modules). How can we compare the generic Jordan types 
$P_L$, $P_M$, and $P_N$? Under what conditions could we have additivity 
${P_M=P_L + P_N}$, in a suitable sense for the Jordan types, or have
${\mathcal{P}_M=\mathcal{P}_L + \mathcal{P}_N}$ for the Jordan degree types (Definition \ref{degreeJT-def})? \par
We can ask the analogous question for non-generic Jordan types (see, for example, \cite{AIK}).
\end{question}
\begin{example}[Additivity]
Let $R=\mathsf{k}[x,y]$, and consider the ideals $I=(x^4,xy,y^3)$ and $J=(x^6,xy,y^6)$.
Let $L=I/J$, $M=R/J$, and $N=R/I$. Then $\ell=x+y$ 
computes the generic Jordan type in each of the modules $L$, $M$, and $N$.
We have for the Jordan degree types,
\begin{align*}
L=I/J&=\langle y^3,y^4,x^4,y^5,x^5\rangle, \text { and } \mathcal{P}_{\ell,L}=\left(3_3,2_4\right);\\
N=R/I&=\langle 1,x,y,x^2,y^2,x^3\rangle, \text { and }\mathcal{P}_{\ell,N}=\left(4_0,2_1\right)\\
M=R/J&=\langle 1,x,y,x^2,y^2,x^3,y^3,x^4,y^4,x^5,y^5\rangle, \text { and } \mathcal{P}_{\ell,M}=\left(6_0,5_1\right).
\end{align*}
Thus, $ \mathcal{P}_M=\left( 6_0=4_0+_c 2_4, 5_1=2_1+_c3_3\right)$ showing that the Jordan degree type $\mathcal{P}_{\ell,M}=\overline{\mathcal{P}_{\ell,N}+P_{\ell,L}}$ in a natural sense. On the other hand we have the Jordan type $P_{\ell,M}=(6,5)\not= P_{\ell,L}+P_{\ell,N}=(4,3,2,2)$ nor is $P_{\ell,M}$ the dominance sum $P_{\ell,L}+_bP_{\ell,N}=(4+3,2+2)=(7,4)$. Each Jordan type $P_{\ell,N},P_{\ell, L}, $ and $P_{\ell,M}$ is SL.
\end{example}
\par\noindent
\begin{question}What partitions $P_{\ell,M}$ and degree-partitions $\mathcal{P}_{\ell,M}$ can we obtain for $M$, fixing those invariants for $L$ and $N$?
\end{question}
\subsubsection{Loci in $\mathbb P(\maxA)$ defined by Jordan type.}
Recall from Definition \ref{posetPMdef} that the set of Jordan types of elements of $A$ acting on $M$ is a poset $\mathfrak{P}_M$ under the ``dominance'' partial order; this poset $\mathfrak{P}_M$ is an invariant of the module $M$. Given a partition $P$ of $m=\dim_\mathsf{k}M$, the locus $\mathfrak{Z}_{P,M}\subset \mathbb P({\mathfrak{m}}_A)$, the projective space of the maximal ideal, parametrizes those elements $\ell\in {\mathfrak{m}}_A$ such that the action of $m_\ell$ on $M$ has Jordan type $P_\ell=P$. The closures $\overline{\mathfrak{Z}_{P,M}}$ form a poset under inclusion. Of course, the actual loci $\mathfrak{Z}_{P,M}$ in either the $A$ graded or $\mathcal{A}$ local case give more information than just the poset. 
\begin{example}\label{2posetex} Let $M=\mathsf{k}[x,y]/(x^2,y^3)$: then $P_x=(2,2,2)$, $P_y=(3,3)$ and $P_\ell=(4,2)$ for $ \ell=x+by$, $b\not=0$. Here $\mathfrak{P}_M=\{(4,2)\ge (3,3)\ge (2,2,2)\}$. However $\mathfrak{Z}_{P,M}=\{ \overline{\mathfrak{Z}_{4,2}}\supset \{\overline{\mathfrak{Z}_{3,3}}\cup \overline{\mathfrak{Z}_{2,2,2}}\}$ as $\mathfrak{Z}_{3,3}$ is a single point. So the the two posets $P_M$ and the poset of closures of loci $\mathfrak{Z}_{P,M}$ may be different, the latter being necessarily a subposet of the former, by the semicontinuity of Jordan type.
\end{example}
There has been some study of these Jordan type loci by commutative algebraists, for example M. Boij, J. Migliore, R.~Mir\'{o}-Roig, and U. Nagel, on the non-weak Lefschetz locus \cite{BMM}, and the notes \cite{DIKS}. On the other hand, there has been recent study of the Jordan type loci $\mathfrak{Z}_P$ in the nilpotent commutator $\mathcal{N}_B$ of an $n\times n$ matrix $B$ \cite{Ob1,Ob2}; when $B$ is a Jordan matrix of stable Jordan type $Q$, then it is conjectured that the set $\mathcal{B}(Q)$ of loci in the nilpotent commutator $\mathcal{N}_B$ can be arranged in a rectangular $r$-box, whose dimensions are determined by the $r$ parts of $Q$ (see \cite[Conjecture 4.11]{IKVZ}), and that the equations for these loci are complete intersections \cite[Remark 4.13]{IKVZ}. The first conjecture is shown for stable $Q$ having $r=2$ parts \cite[Theorem~1.1]{IKVZ}. 
\vskip 0.2cm\noindent
\begin{question}
Let $x\in \maxA$ have Jordan type $Q$. Denote the matrix of $m_x$ by $B$ and recall that $\mathcal{N}_B$ is the nilpotent commutator of $B$ (\S \ref{commutingJTsec}). Is there a morphism $\tau_{x,\mathcal{A}}\colon \maxA\to \mathcal{N}_B$, such that the Jordan type of $y\in \maxA$ satisfies $P_y=P_{\tau_{x,\mathcal{A}}(y)}$? 
\end{question}
Recall that for $x\in \mathcal{A}$ we denote by $\nu(x)$ its order: the maximum $i$ such that $x\in{\mathfrak{m}_\A}^i$.
\begin{question}
Given an Artinan algebra $\A$ with socle degree $j_\A$, by a simple linear algebra argument, we can always find a $\F$-basis $B$ for $\A$ such that for every $i\in\{0,\ldots,j_\A\}$
\begin{equation}
\label{orderHF}
\#\{x\in B\mid \nu(x)=i\}=H(\A)_i.
\end{equation}
Given $\ell\in\m_\A$, is it possible to find a Jordan basis for $m_\ell$ also satisfying \eqref{orderHF}?   
\end{question}

This is clearly possible when $A$ is graded, and $\ell$ is homogeneous.
\subsubsection{Tensor products of local Artinian algebras and Hilbert function.}
\begin{question} Assume that for local Artinian $\sf k$ algebras $\mathcal{A}$ and $\mathcal{B}$ their tensor product $\mathcal{A}\otimes_\mathsf{k}\mathcal{B}$ is also a local Artinian $\sf k$ algebra. By a criterion of M. Sweedler \cite[Theorem, b.iii.]{Sw} this is equivalent to $(\A/\mathfrak{m}_\A)\otimes_{\sf k} (\B/\mathfrak{m}_\B)$ is local. How is the Hilbert function for $\A\otimes_\mathsf{k}\B$ related to the Hilbert functions of $\A$ and of $\B$?  Here we assume the residue fields are equal  $(\A/\mathfrak{m}_\A)= (\B/\mathfrak{m}_\B)\cong \sf k$: then what is the relation between the associated graded algebra  $(\A\otimes_\mathsf{k}\B)^\ast$ and $\A^\ast\otimes_\mathsf{k}\B^{\ast}$, the tensor product of the associated graded algebras of $\A$, $\B$?\par
Jordan degree type may be defined for local Artinian algebras $\A$ by replacing degree of elements $a$ by their order (the highest power of $\mathfrak{m}_\A$ containing $a$) in Definition \ref{degreeJT-def}. In Proposition \ref{tpdJTprop}ff. we study the behavior of Jordan degree type for tensor products of standard graded Artinian algebras. What is the behavior of Jordan degree type for tensor products of local Artinian algebras?
\end{question}

\begin{ack}
We appreciate conversations with Shujian Chen and Ivan Martino. We thank Oana Veliche for conversations and help with \textsc{Macaulay2}. The first author appreciates conversations with participants of the informal work group on Jordan Type at Institute Mittag Leffler in July, 2017 \cite{DIKS}. A subsequent remark of Yong-Su Shin that there was no published reference available in the Lefschetz-related commutative algebra literature about Jordan types was a spark for this work. Discussion during his visit to Northeastern University in January 2018 led to \S\ref{tensorprodsec} about the literature on modular tensor products. We thank Rodrigo Gondim for permission to use several examples. We appreciate comments and answers to our questions by Larry Smith, Junzo Watanabe, Alexandra Seceleanu and Steven L. Kleiman. We thank Lorenzo Robbiano, Gordana Todorov, Shijie Zhu and Jerzy Weyman for their comments. We thank the referee for comments, in particular the referee suggestions led to Proposition~\ref{Wiebelem}.\par
The second author was partially supported by CIMA -- Centro de
Investiga\c{c}\~{a}o em Matem\'{a}tica e Aplica\c{c}\~{o}es,
Universidade de \'{E}vora, project UID/MAT/04674/2019
(Funda\c{c}\~{a}o para a Ci\^{e}ncia e Tecnologia). Part of this work was done while the second author was visiting KU~Leuven, he wishes to thank Wim Veys and the Mathematics Department for their hospitality.
\end{ack}
\vskip 0.4cm
\footnotesize


\begin{thebibliography}{ABCDE}

\bibitem[Ait]{Ait}
A. C. Aitken: \emph{The normal form of compound and induced matrices}, Proc. London Math. Soc. 38 (1934), 353--376.

\bibitem[ADIKSS]{DIKS}
N. Altafi, H. L. Dao, A. Iarrobino, L. Khatami, A. Seceleanu, and Y-S. Shin: 
\emph{[Informal Report of] Work Group on Jordan Type at ``Lefschetz Properties in Algebra, Geometry and Combinatorics''}, 6p., IML July, 2017.

\bibitem[ACP]{ACP}
J. Ahn, Y. H. Cho, and J. P. Park: \emph{Generic initial ideals of Artinian ideals having Lefschetz properties or the strong Stanley property}, 
J. Algebra 318 (2007), no. 2, 589--606.

\bibitem[AIK]{AIK}
N. Altafi, A. Iarrobino, and L. Khatami: \emph{Complete intersection Jordan types in height two}, J. Algebra 557 (2020), 224--277.

\bibitem[AIKY]{AIKY}
N. Altafi, A. Iarrobino, L. Khatami, and J. Yam\'{e}ogo: \emph{Jordan types for graded Artinian algebras in height two}, arXiv:math.AC/2006.11794v2. (2021), 47p.

\bibitem[AnWi] {AnWi}
D. Anderson and M. Winders: \emph{Idealization of a module},
J. Commut. Algebra 1 (2009), no. 1, 3--56.

\bibitem[Arn]{Arn}
V. I. Arnol'd: \emph{On matrices depending on parameters}, (Russian) Uspehi Mat. Nauk 26 (1971), no. 2(158), 101--114. (English translation), The London Mathematical Society, Russian Mathematical surveys, Vol 26, Number 2 (1971), 29--43.

\bibitem[Art]{Artin}
M. Artin: \emph{Lectures on Deformations of Singularities}, Tata Institute of Fundamental Research, Bombay, 1976.

\bibitem[Av]{Av}
L. Avramov: \emph{Flat morphisms of complete intersections,} 
Dokl. Akad. Nauk SSSR 225 (1975), no. 1, 11--14. English translation,
Soviet Math. Dokl. 16 (1975), no. 6, 1413--1417 (1976).

\bibitem[AvIy]{AvIy}
L. Avramov and S. B. Iyengar: \emph{Restricting homology to hypersurfaces},
Geometric and topological aspects of the representation theory of finite groups, 1--23, Springer Proc. Math. Stat., 242, Springer, Cham, 2018.

\bibitem[BN]{BN}
E.~Babson and E.~Nevo:
{\em Lefschetz properties and basic constructions on simplicial spheres.}
J. Algebr. Comb. 31 (2010), 111--129.

\bibitem[Bar]{Bar}
V. Baranovsky: \emph{The variety of pairs of commuting nilpotent matrices is irreducible}, Transform. Groups 6 (2001), no. 1, 3--8.

\bibitem[Bas]{Ba}
R. Basili: \emph{On the irreducibility of commuting varieties of nilpotent  matrices},
J.~Algebra  268 (1) (2003) 58--80.

\bibitem[BasI]{BaI}
R. Basili and A. Iarrobino: \emph{Pairs of commuting nilpotent matrices, and Hilbert function},
J. Algebra  {\bf 320} \# 3 (2008), 1235--1254.

\bibitem[BIK]{BIK}
R. Basili, A. Iarrobino and L. Khatami: \emph{Commuting nilpotent matrices and Artinian Algebras}, J. Commutative Algebra (2) \#3 (2010), 295--325.

\bibitem[BO]{BO}
G. Benkart and J. Osborn: \emph{Representations of rank one Lie algebras of characteristic $p$}, in Lecture
Notes Math. 933 (1982), 1--37.

\bibitem[Ben]{Be}
D. Benson: \emph{Representations of elementary abelian $p$-groups and vector bundles},
Cambridge Tracts in Mathematics, 208. Cambridge University Press, Cambridge, 2017. xvii+328 pp. ISBN: 978-1-107-17417-7. 

\bibitem[BenIKP]{BIKP}
D. Benson, S. B. Iyengar, H. Krause, and J. Pevtsova: \emph{Local duality for representations of finite group schemes}, 
Compositio Math. 155 (2019), no. 2, 424--453.

\bibitem[BerI]{BI} 
D. Bernstein and A. Iarrobino: \emph{A nonunimodal graded Gorenstein Artin algebra in codimension five,} Comm. Algebra 20 (8) (1992) 2323--2336. 

\bibitem[Bo1]{Bo}
M. Boij: \emph{Graded Gorenstein Artin algebras whose Hilbert functions have a large number of valleys,} Comm. Algebra 23 (1995), no. 1, 97--103.

\bibitem[Bo2]{Bo2}
M. Boij: \emph{Components of the space parametrizing graded Gorenstein Artin algebras with a given Hilbert function,}
Pacific J. Math. 187 (1999), no. 1, 1--11. 

\bibitem[BoI1]{BoI1}
M. Boij and A. Iarrobino: \emph{Reducible family of height three level algebras},
J. Algebra 321 (2009), no. 1, 86--104. 

\bibitem[BoI2]{BoI}
M. Boij and A. Iarrobino: \emph{Duality for central simple modules of Artinian Gorenstein algebras},
preprint (2020).

\bibitem[BMMN]{BMM}
M. Boij, J. Migliore, R. Mir\'{o}-Roig, and U. Nagel: \emph{The non-Lefschetz locus,}
J.~Algebra 505 (2018), 288--320.

\bibitem[BoLa]{BoLa}
M. Boij and D. Laksov: \emph{Nonunimodality of graded Gorenstein Artin algebras} Proc. Amer. Math. Soc. 120 (4) (1994) 1083--1092. 

\bibitem[BrK]{BK}
H. Brenner and A. Kaid: \emph{A note on the weak Lefschetz property of monomial complete intersections in positive characteristic,}
Collect. Math. 62 (2011), no.~1, 85--93.

\bibitem[Bri]{Bri}
J. Brian\c{c}on: \emph{Description de $\Hilb^n{\mathcal{C}}\{x,y\}$}, Invent.\ Math.\ 41 (1977), no.\ 1, 45--89. 

\bibitem[BrWi]{BrWi}
J. Britnell and M. Wildon: \emph{On types and classes of commuting matrices over finite fields,}
J. Lond. Math. Soc. (2) 83 (2011), no. 2, 470--492.

\bibitem[BuBKT]{BBKT}
W Buczy\'{n}ska, J. Buczy\'{n}ski, J. Kleppe, and Z. Teitler:
\emph{Apolarity and direct sum decomposability of polynomials},
Michigan Math. J. 64 (2015), no. 4, 675--719.

\bibitem[CaDI]{CDI}
A. Capasso, P. De Poi, and G. Ilardi: \emph{CW-complex Nagata Idealizations}, Adv. in Appl. Math. 120 (2020), 102079, 23 pp.

\bibitem[CarFP]{CFP}
J. Carlson, E. Friedlander, and J. Pevtsova: \emph{Modules of constant Jordan type,}
J.~Reine Angew. Math. 614 (2008), 191--234. 

\bibitem[Cat]{Cat}
E. Cattani: \emph{Mixed Lefschetz theorems and Hodge-Riemann bilinear relations}, 
Int. Math. Res. Not. IMRN 2008, no. 10, Art. ID rnn025, 20 pp.

\bibitem[CeGIM]{CGIM}
A. Cerminara, R. Gondim, G. Ilardi, and F. Maddaloni: \emph{Lefschetz properties for higher order Nagata idealizations,} 
Adv. in Appl. Math. 106 (2019), 37--56. 

\bibitem[CoM]{CM}
D. Collingwood and W. McGovern: \emph{Nilpotent Orbits in Semisimple Lie algebras}, Van Nostrand Reinhold (New York), (1993).

\bibitem[Con]{Con}
A. Conca: \emph{Reduction numbers and initial ideals},
Proc. Amer. Math. Soc. 131 (2003), no. 4, 1015--1020.

\bibitem[Co1]{Co1}
D. Cook, II: \emph{The Lefschetz properties of monomial complete intersections in positive characteristic}
J. Algebra 369 (2012), 42--58.

\bibitem[Co2]{Co2}
D. Cook, II: \emph{The strong Lefschetz property in codimension two}, 
J. Commut. Algebra 6 (2014), no. 3, 323--345.

\bibitem[CoNa1]{CoNa1}
D. Cook, II and U. Nagel:
\emph{The weak Lefschetz property, monomial ideals, and lozenges,} 
Illinois J. Math. 55 (2011), no. 1, 377--395 (2012). 

\bibitem[CoNa2]{CoNa2}
D. Cook, II and U. Nagel: \emph{Signed lozenge tilings,} 
Electron. J. Combin. 24 (2017), no. 1, Paper 1.9, 27 pp. 

\bibitem[CGo]{CGo}
B. Costa and R. Gondim: \emph{The Jordan type of graded Artinian Gorenstein algebras},
Adv. in Appl. Math. 111 (2019), 101941, 27 pp. 

\bibitem[DGI]{DGI}
A. Dimca, R. Gondim, and G. Ilardi: \emph{Higher order Jacobians, Hessians and Milnor algebras}, Collect. Math. 71 (2020), no. 3, 407--425.

\bibitem[Ei]{Ei}
D. Eisenbud: \emph{Commutative algebra with a view toward algebraic geometry}, Graduate Texts in Mathematics, 150. Springer-Verlag, New York, 1995.

\bibitem[FPS]{FPS}
E. Friedlander, J. Pevtsova, and A. Suslin: \emph{Generic and maximal Jordan types.} Invent. Math. 168 (2007), no. 3, 485--522.

\bibitem[Fu]{Fu}
W. Fulton: \emph{Intersection Theory}, Ergebnisse der Mathematik und ihrer Grenzgebiete. 3. Folge. A Series of Modern Surveys in Mathematics, Springer-Verlag, Berlin, 1998. xiv+470 pp.

\bibitem[Ga]{Ga}
F. R. Gantmacher: \emph{The Theory of Matrices},Vol. 1. Translated from the Russian by K. A. Hirsch. Reprint of the 1959 translation. AMS Chelsea Publishing, Providence, RI, 1998. x+374 pp.

\bibitem[Ger]{Ger}
M. Gerstenhaber: \emph{On the Deformation of Rings and Algebras: II},
Ann. of Math. 84 (1966) 1--19. 

\bibitem[GPX1]{GPX1}
S. P. Glasby, C. E. Praeger, and B. Xia: \emph{Decomposing modular tensor products: ``Jordan partitions'', their parts and $p$-parts,}  Israel J. Math. 209 (2015), no. 1, 215--233.

\bibitem[GPX2]{GPX2}
S. P. Glasby, C. E. Praeger, and B. Xia: \emph{Decomposing modular tensor products, and periodicity of `Jordan partitions'},  J. Algebra 450 (2016), 570--587.

\bibitem[Gon]{Go}
R. Gondim: \emph{On higher Hessians and the Lefschetz properties}: J. Algebra 489 (2017), 241--263.

\bibitem[GonZ]{GoZ}
R. Gondim and G. Zappal\`{a}: \emph{Lefschetz properties for Artinian Gorenstein algebras presented by quadrics}, Proc. Amer. Math. Soc. 146 (2018), no. 3, 993--1003.

\bibitem[GorN]{GorN}
P. Gordan and M. N\"{o}ther:
\emph{Ueber die algebraischen Formen, deren Hesse'sche Determinante identisch verschwindet},
Math. Ann. 10 (1876), no. 4, 547--568. 

\bibitem[G\"{o}t]{Got}
L. G\"{o}ttsche:  \emph{Betti numbers for the Hilbert function strata of the punctual Hilbert scheme in two variables}, Manuscr. Math. 66 (1990), 253--259.

\bibitem[GrSt]{GrSt}
D. Grayson and M. Stillman: \emph{{\sc{Macaulay2}}, a software system for research in algebraic geometry}, Available at \url{https://faculty.math.illinois.edu/Macaulay2/}

\bibitem[HMNW]{HMNW} T. Harima, J. Migliore, U. Nagel and J. Watanabe:  \emph{The weak and strong Lefschetz properties for artinian K-algebras}, J. Algebra 262 (2003), 99--126.

\bibitem[H-W]{H-W}
T. Harima, T. Maeno, H. Morita, Y. Numata, A. Wachi, and J. Watanabe: \emph{The Lefschetz properties}. Lecture Notes in Mathematics, 2080. Springer, Heidelberg, 2013. xx+250 pp. ISBN: 978-3-642-38205-5; 978-3-642-38206-2

\bibitem[HW1]{HW0}
T. Harima, and J. Watanabe: \emph{The finite free extension of Artinian $K$-algebras with the strong Lefschetz property}, Rend. Sem. Mat. Univ. Padova 110 (2003), 119--146.

\bibitem[HW1']{HW0'}
T. Harima and J. Watanabe: \emph{
Erratum to: "The finite free extension of Artinian K-algebras with the strong Lefschetz property'' [Rend. Sem. Mat. Univ. Padova 110 (2003), 119--146; MR2033004]}, 
Rend. Sem. Mat. Univ. Padova 112 (2004), 237--238.

\bibitem[HW2]{HW1.5}
T. Harima and J. Watanabe: \emph{The central simple modules of Artinian Gorenstein algebras}, J. Pure Appl. Algebra 210 (2007), no. 2, 447--463.

\bibitem[HW3]{HW}
T. Harima and J. Watanabe: \emph{The strong Lefschetz property for Artinian algebras with non-standard grading}, J. Algebra 311 (2007) 511--537.

\bibitem[HW4]{HW3}
T. Harima and J. Watanabe: \emph{The commutator algebra of a nilpotent matrix and an application to the
theory of commutative Artinian algebras}, J. Algebra 319 (2008), 2545-2570.

\bibitem[HW5]{HW5}
T. Harima and J. Watanabe: \emph{The weak-Lefschetz property for $\mathfrak{m}$-full and componentwise linear ideals}, Illinois J. of Math.,
 56, No. 3 (2012), 957--966.
 
\bibitem[Hart]{Hart}
R. Hartshorne: \emph{Algebraic geometry},
Graduate Texts in Mathematics, No. 52. Springer-Verlag, New York-Heidelberg, 1977. xvi+496 pp. ISBN: 0-387-90244-9.

\bibitem[I]{I1}
A. Iarrobino: \emph{Associated graded algebra of a Gorenstein Artin algebra},
Amer. Math. Soc. Memoir Vol. 107 \# 514, (1994), Providence, RI.

\bibitem[IKa]{IK}
A. Iarrobino and V. Kanev: \emph{Power sums, Gorenstein algebras, and determinantal loci}, SLN 1721, Springer, NY, 1999.

\bibitem[IKh]{IKh}
A. Iarrobino and L. Khatami: \emph{Bound on the Jordan type of a generic nilpotent matrix commuting with a given matrix}, J. Alg. Combinatorics, 38, \#4 (2013), 947--972.                                                       

\bibitem[IKVZ]{IKVZ} A. Iarrobino, L. Khatami, B. Van Steirteghem, and R. Zhao: \emph{Nilpotent matrices having a given Jordan type as maximum commuting nilpotent orbit},  Linear Algebra and Applications,  Linear Algebra and Appl., 546 (2018), 210--260.

\bibitem[IM]{IM}
A. Iarrobino and P. Macias Marques: \emph{Reducibility of a family of local Artinian Gorenstein algebras},
arXiv:math.AC/2112.14664 (2021), 43p.

\bibitem[IMcS]{IMS}
A. Iarrobino, C. McDaniel, and A. Seceleanu: \emph{Connected Sums of Graded Artinian Gorenstein Algebras and Lefschetz Properties}, J. Pure Appl. Algebra 226 (2022), no. 1,  Paper No. 106787, 52 p. 

\bibitem[IMM]{IMM}
A. Iarrobino, P. Macias Marques, and C. McDaniel: \emph{Free extensions and Jordan type}, J. Algebra 549 (2020), 346--364.

\bibitem[IiIw]{IiI}
Kei-ichiro Iima and Ryo Iwamatsu: \emph{On the Jordan decomposition of tensored matrices of Jordan canonical forms}, Math. J. Okayama Univ. 51 (2009), 133--148.\par

\bibitem[Ik]{Ik}
H. Ikeda: \emph{Results on Dilworth and Rees numbers of Artinian local rings,}
Japan. J. Math. (N.S.) 22 (1996), no. 1, 147--158.

\bibitem[IkW]{IkW}
H. Ikeda and J. Watanabe: \emph{The Dilworth lattice of Artinian rings}, J. Commut. Algebra 1(2), 315--326 (2006).

\bibitem[Kh1]{Kh1}
L. Khatami: \emph{The poset of the nilpotent commutator of a nilpotent matrix}, Linear Algebra Appl. 439 (2013), no. 12, 3763--3776.

\bibitem[Kh2]{Kh2}
L. Khatami: \emph{The smallest part of the generic partition of the nilpotent commutator of a nilpotent matrix}, J.  Pure Appl. Algebra 218 (2014), no. 8, 1496--1516.

\bibitem[KlKl]{KlKl}
S. L. Kleiman and J. O. Kleppe: \emph{Macaulay duality over any base,}  preprint, 2022.

\bibitem[KOb]{KOb}
T. Ko{\v{s}}ir and P. Oblak: \emph{On pairs of commuting nilpotent matrices}, Transform. Groups 14 (2009), no. 1, 175--182.

\bibitem[KuVr]{KuVr}
A. Kustin and A. Vraciu:
\emph{The weak Lefschetz property for monomial complete intersection in positive characteristic,}
Trans. Amer. Math. Soc. 366 (2014), no. 9, 4571--4601.

\bibitem[LuNi]{LuNi}
S. Lundqvist and L. Nicklasson: \emph{On the structure of monomial complete intersections in positive characteristic}, J. Algebra 521 (2019), 213--234.

\bibitem[Mac]{Mac}
F. H. S. Macaulay: \emph{The algebraic theory of modular systems}, Cambridge Mathematical Library. Cambridge  University Press, Cambridge, 1916. Reissued with an introduction by P.~Roberts, 1994. xxxii+112 pp. ISBN: 0-521-45562-6.

\bibitem[MN]{MN}
T. Maeno and Y. Numata, \emph{Sperner property and finite-dimensional Gorenstein algebras associated to matroids,}
J. Commut. Algebra 8 (2016), no. 4, 549--570. 

\bibitem[MW]{MW}
T. Maeno and J. Watanabe: \emph{Lefschetz elements of Artinian Gorenstein algebras and Hessians of homogeneous polynomials}, Illinois J. Math. 53 (2009) 593--603.

\bibitem[MaV]{MaV}
N. Mastronardi and P. Van Dooren: \emph{Computing the Jordan structure of an eigenvalue,}
SIAM J.~Matrix Anal. Appl. 38 (2017), no. 3, 949--966. 

\bibitem[McSe]{McSe}
J. McCullough and A. Seceleanu: \emph{Quadratic Gorenstein algebras with many surprising properties}, Arch. Math. (Basel) 115 (2020), no. 5, 509--521.

\bibitem[MCIM]{MCIM}
C. McDaniel, S. Chen, A. Iarrobino, and P. Marques: \emph{Free extensions and Macaulay duality, with an application to rings of relative coinvariants}, Linear Multilinear Algebra 69 (2021), no. 2, 305--330.

\bibitem[McN]{McN}
G. McNinch: \emph{On the centralizer of the sum of commuting nilpotent elements}, J.~Pure Appl. Algebra 206(1--2), 123--140 (2006).

\bibitem[MS]{MS}
D. Meyer and L. Smith: \emph{Poincar\'{e} duality algebras, Macaulay's dual systems, and Steenrod operators}, Cambridge Tracts in Mathematics 167, 
Cambridge University Press,  2005, Cambridge, UK.

\bibitem[MezMO]{MezMO}
E. Mezzetti,  R. M. Mir\'{o}-Roig, and G. Ottaviani:
\emph{Laplace equations and the weak Lefschetz property} 
Canad. J. Math. 65 (2013), no. 3, 634--654.

\bibitem[MezM]{MezM}
E. Mezzetti and R. M. Mir\'{o}-Roig: \emph{The minimal number of generators of a Togliatti system},
Ann. Mat. Pura Appl. (4) 195 (2016), no. 6, 2077--2098. 

\bibitem[MicM]{MicM}
M. Micha{\l}ek and R. M. Mir\'{o}-Roig:
\emph{Smooth monomial Togliatti systems of cubics} 
J. Combin. Theory Ser. A 143 (2016), 66--87.

\bibitem[MiNa]{MiNa}
J. Migliore and U. Nagel: \emph{Survey article: a tour of the weak and strong Lefschetz properties}, 
J. Commut. Algebra 5 (2013), no. 3, 329--358.

\bibitem[MNS]{MNS}
J. Migliore, U. Nagel, and H. Schenck: \emph{The Weak Lefschetz property for quotients by Quadratic Monomials,} Math. Scand. 126 (2020), 41--60.

\bibitem[MiZa1]{MiZa1}
J. Migliore and F. Zanello: \emph{The strength of the weak Lefschetz property}, Illinois J. Math. 52 (2008), no. 4, 1417--1433.

\bibitem[MiZa2]{MiZa}
J. Migliore and F. Zanello: \emph{Stanley's nonunimodal Gorenstein $h$-vector is optimal}, Proc. Amer. Math. Soc. 145 (2017), no. 1, 1--9.

\bibitem[Mu]{Mu}
D. Mumford: \emph{Lectures on curves on an algebraic surface},
with a section by G. M. Bergman. Annals of Mathematics Studies, No. 59 Princeton University Press, Princeton, N.J. 1966 xi+200 pp.

\bibitem[Na]{Na}
M. Nagata: \emph{Local rings}, Interscience Tracts in Pure and Applied Mathematics, No. 13 Interscience Publishers, division of John Wiley \& Sons, New York-London 1962 xiii+234 pp.

\bibitem[NSW]{NSW}
S. Nasseh, A. Seceleanu, and J. Watanabe: \emph{Determinants of incidence and Hessian matrices arising from the vector space lattice,}
J. Commut. Algebra 11 (2019), no. 1, 131--154.

\bibitem[Ni]{Ni}
L. Nicklasson: \emph{The strong Lefschetz property of monomial complete intersections in two variables} Collect. Math. 69 (2018), no. 3, 359--375.

\bibitem[Ob1]{Ob1}
P. Oblak: \emph{The upper bound for the index of nilpotency for a matrix commuting with a given nilpotent matrix}, Linear and
Multilinear Algebra 56 (2008) no. 6, 701--711. Slightly revised in
arXiv:math.AC/0701561.

\bibitem[Ob2]{Ob2}
P. Oblak: \emph{On the nilpotent commutator of a nilpotent matrix}, Linear
Multilinear Algebra 60 (2012), no. 5, 599--612.

\bibitem[OCV]{OCV}
K. C. O'Meara, J. Clark, and C.I. Vinsonhaler: \emph{Advanced Topics in Linear Algebra: Weaving Matrix Problems Through the Weyr Form},  Oxford University Press, Oxford, 2011.

\bibitem[OW]{OW}
K. C. O'Meara and J. Watanabe: \emph{Weyr structures of matrices and relevance to commutative finite-dimensional algebras}, 
Linear Algebra Appl. 532 (2017), 364--386.

\bibitem[MSS]{MSS}  
M. Mastroeni, H. Schenck, and M. Stillman: \emph{Quadratic Gorenstein rings and the Koszul property I}. Trans. Amer. Math. Soc. 374 (2021), no. 2, 1077--1093.

\bibitem[Pan]{Pan}
D. I. Panyushev: \emph{Two results on centralisers of nilpotent elements},
J. Pure and Applied Algebra, 212 no. 4 (2008), 774--779.

\bibitem[ParSh]{PS}
Jung-pil Park and Yong-Su Shin: \emph{Modular Jordan type for $\mathsf{k}[x,y]/(x^m,y^n)$ for $m=3,4$}, J. Korean Math. Soc. 57 (2020), No. 2, pp. 283--312.

\bibitem[Pev1]{Pev1}
J. Pevtsova: \emph{Representations and cohomology of finite group schemes,} Advances in representation theory of algebras, 231--261, 
EMS Ser. Congr. Rep., Eur. Math. Soc., Z\"{u}rich, 2013.  (also arXiv:math.RT/1409.6782)

\bibitem[Pev2]{P}
J. Pevtsova: \emph{Support varieties, local Jordan type, and applications}
slides to talk at special session ``Artinian algebras and their deformations'' at
Joint AMS-EMS-SPN  Meeting in Porto, June 2015.\par
[Similar talk ``Applications of geometry to modular representation theory'' online at https://sites.math.washington.edu/\~ julia/AMS{\_}SFSU{\_}2014/Pevtsova\_ AMS\_ plenary.pdf ]

\bibitem[Pop]{Pop}
D. Popescu: \emph{The strong Lefschetz property and certain complete intersection extensions,}
Bull. Math. Soc. Sci. Math. Roumanie (N.S.) 48(96) (2005), no. 4, 421--431.

\bibitem[Pr1]{Pr}
A. Premet: \emph{The Green ring of a simple three-dimensional Lie p-algebra} (Russian), Izv. Vyssh. Uchebn. Zaved. Mat. (1991), no. 10, 56--67; translation in Soviet Math. (Iz. VUZ) 35 (1991), no. 10, 51--60.

\bibitem[Pr2]{Pr2}
A. Premet: \emph{Nilpotent commuting varieties of reductive Lie algebras,}
Invent. Math. 154 (2003), no. 3, 653--683.

\bibitem[Ra]{Ra}
T. Ralley: \emph{Decomposition of products of modular representations}, J. London Math. Soc. 44 (1969) 480--484.

\bibitem[Re]{Re}
J.-C. Renaud: \emph{The decomposition of products in the modular representation ring of a cyclic group of prime power order}, J. Algebra 58 (1979), no. 1, 1--11.

\bibitem[Se]{Se}
H. Sekiguchi: \emph{The upper bound of the Dilworth number and the Rees number of Noetherian local rings with a Hilbert function}, Advances in Math 124(2), (1996), 197--206.

\bibitem[Sh]{Sh}
H. Shapiro: \emph{The Weyr characteristic}, Amer. Math. Monthly 106 (1999), no. 10, 919--929.

\bibitem[Sr]{Sr}
B. Srinivasan: \emph{The modular representation ring of a cyclic $p$-group}, Proc. London Math. Soc. (3) 14 (1964), 677--688.

\bibitem[St]{St}
R. Stanley: \emph{Hilbert functions of graded algebras}, 
Advances in Math. 28 (1978), no. 1, 57--83.

\bibitem[Sw]{Sw}
M. Sweedler: \emph{When is the tensor product of algebras local?},
Proc. Amer. Math. Soc. 48 (1975), 8--10. 

\bibitem[Wa]{Wa}
J. Watanabe: \emph{Some remarks on Cohen-Macaulay rings with many zero divisors and an
application}, J. Algebra, 39(1) (1976),1--14.

\bibitem[Wi]{Wi}
A. Wiebe: \emph{The Lefschetz property for componentwise linear ideals and Gotzmann ideals}, Comm. Algebra 32(12) (2004), 4601--4611; arXiv.math.AC 0307.223 (2003).

\bibitem[Yam]{Yam}
 J.~Yam\'{e}ogo: \emph{D\'{e}composition cellulaire de vari\'{e}t\'{e}s param\'{e}trant des id\'{e}aux homog\`{e}nes de C[[x,y]]. Incidence des cellules. I,} Compositio Math. 90 (1) (1994), 81--98. 

\bibitem[ZaZy]{ZaZy}
F. Zanello and J. Zylinski: \emph{Forcing the strong Lefschetz and the maximal rank properties}, J. Pure Appl. Algebra 213 (2009), no. 6, 1026--1030. 
  
\end{thebibliography}
\end{document}